\def\csname ver@etex.sty\endcsname{3000/12/31}
\crefname{lemma}{lemma}{lemmata}
\Crefname{lemma}{Lemma}{Lemmata}
\crefname{subsection}{subsection}{subsections}
\Crefname{subsection}{Subsection}{Subsections}
\crefname{conjecture}{conjecture}{conjectures}
\Crefname{conjecture}{Conjecture}{Conjectures}
\numberwithin{equation}{section}		
 \definecolor{webbrown}{rgb}{0.65, 0.16, 0.16}
\newcommand{\de}{\partial}
\newcommand{\bbraket}[1]{\llbracket #1 \rrbracket}
\newcommand{\Z}{\mathbb{Z}}
\newcommand{\Q}{\mathbb{Q}}
\newcommand{\R}{\mathbb{R}}
\newcommand{\C}{\mathbb{C}}
\renewcommand{\P}{\mathbb{P}}
\newcommand{\iu}{\mathrm{i}}
\newcommand{\Id}{\mathrm{Id}}
\newcommand{\Ai}{\mathrm{Ai}}
\newcommand{\hAi}{\widehat{\mathrm{Ai}}}
\newcommand{\tAi}{\widetilde{\mathrm{Ai}}}
\newcommand{\mc}[1]{\mathcal{#1}}
\newcommand{\ms}[1]{\mathsf{#1}}
\DeclareMathOperator*{\Res}{Res}
\DeclareMathOperator{\End}{End}
\theoremstyle{plain}
\newtheorem{theorem}{Theorem}[section]
\newtheorem{proposition}[theorem]{Proposition}
\newtheorem{lemma}[theorem]{Lemma}
\theoremstyle{definition}
\newtheorem{definition}[theorem]{Definition}
\newtheorem{remark}[theorem]{Remark}
\newtheorem{example}[theorem]{Example}
\theoremstyle{plain}
\newtheorem{introthm}{Theorem}
\title{Shifted Witten classes and topological recursion}
\author[S. Charbonnier]{Séverin Charbonnier}
\address[S. Charbonnier]{
	Universit\'e de Paris, UMR 8243 CNRS, Institut de Recherche en Informatique Fondamentale, 75205 Paris Cedex 13, France %
}
\email{charbonnier@irif.fr}
\author[N. K. Chidambaram]{Nitin Kumar Chidambaram}
\address[N. K. Chidambaram]{
	Max Planck Institut f\"ur Mathematik, Vivatsgasse 7, 53111 Bonn, Germany \newline
	\newline
	 School of Mathematics, University of Edinburgh, James Clerk Maxwell Building, Peter Guthrie Tait Rd, Edinburgh EH9 3FD, U.K.}
\email{nitin.chidambaram@ed.ac.uk}
\author[E. Garcia-Failde]{Elba Garcia-Failde}
\address[E. Garcia-Failde]{
	Sorbonne Universit\'e, UMR 7586 CNRS, Institut de Math\'ematiques de Jussieu--Paris Rive Gauche, 75252 Paris, France %
}
\email{egarcia@imj-prg.fr}
\author[A. Giacchetto]{Alessandro Giacchetto}
\address[A. Giacchetto]{
	Universit\'e Paris-Saclay, UMR 3681 CNRS, CEA, Institut de Physique Th\'eorique, 91191 Gif-sur-Yvette, France %
}
\email{alessandro.giacchetto@ipht.fr}
\date{}
\subjclass[2020]{Primary 14H10, 14H70; Secondary 81R10, 34E05}
\keywords{Moduli space of curves, Witten $r$-spin class, topological recursion, $r$-KdV, $W$-constraints}
\begin{document}

\begin{abstract}
	The Witten $r$-spin class defines a non-semisimple cohomological field theory. Pandharipande, Pixton and Zvonkine studied two special shifts of the Witten class along two semisimple directions of the associated Dubrovin--Frobenius manifold using the Givental--Teleman reconstruction theorem. We show that the $R$-matrix and the translation of these two specific shifts can be constructed from the solutions of two differential equations that generalise the classical Airy differential equation. Using this, we prove that the descendant intersection theory of the shifted Witten classes satisfies topological recursion on two $1$-parameter families of spectral curves. By taking the limit as the parameter goes to zero, we prove that the descendant intersection theory of the Witten $r$-spin class can be computed by topological recursion on the $r$-Airy spectral curve. We finally show that this proof suffices to deduce Witten's $r$-spin conjecture, already proved by Faber, Shadrin and Zvonkine, which claims that the generating series of $r$-spin intersection numbers is the tau function of the $r$-KdV hierarchy that satisfies the string equation.
\end{abstract}

\maketitle

\vspace{-1cm}

\tableofcontents
\thispagestyle{empty}

\section{Introduction}

The Deligne--Mumford compactification of the moduli space of genus $g$ curves with $n$ marked points $\overline{\mc{M}}_{g,n}$ has been intensely studied in algebraic geometry, differential geometry and mathematical physics over the past few decades. In 1990, in the context of two-dimensional quantum gravity, Witten formulated his famous conjecture \cite{Wit90} that the generating series of $\psi$-classes intersection numbers on the moduli space of curves satisfies the KdV integrable hierarchy. This integrability property is very powerful and allows one to calculate the intersection numbers recursively. A year later, this celebrated conjecture was proved by Kontsevich \cite{Kon92} using a cellular decomposition of the moduli spaces and a matrix model, and the result is widely referred to as the Witten--Kontsevich theorem.

Around the same time, cohomological field theories (CohFTs) were introduced by Kontsevich and Manin \cite{KM94} to capture the formal properties of the virtual fundamental class in Gromov--Witten theory. They extend the classical notion of topological quantum field theories (TFTs), in the sense that usual linear maps induced by surfaces of genus $g$ with $ n $ boundaries from TFTs are allowed to vary cohomologically in families in CohFTs.

A special class of CohFTs are called semisimple and these have been fully classified by Givental and Teleman \cite{Tel12}. Moreover, the so-called Teleman reconstruction theorem provides an explicit algorithm to reconstruct the full higher genus theory of semisimple CohFTs starting from the genus zero theory. This reconstruction algorithm can be conveniently encoded in terms of (the local version of) a universal procedure formulated in terms of the geometry of Riemann surfaces known as the topological recursion \cite{EO07}. Topological recursion recursively constructs from certain initial data, called a (local) spectral curve, a family of differentials that encodes the CohFT correlators \cite{DOSS14}.

A famous example of a CohFT is the Witten $r$-spin class, defined for each integer $r \geq 2$. The Witten $2$-spin class turns out to be trivial, and thus it leads to no new geometry -- its associated correlators are  the Witten--Kontsevich $\psi$-classes intersection numbers. We know that they are governed by the KdV hierarchy together with the string equation. Equivalently, they satisfy the topological recursion on the (global) Airy spectral curve. For $r \geq 3$ however, the Witten $r$-spin CohFT is non-semisimple and the Teleman reconstruction theorem is not immediately applicable.

A workaround comes from the study of the genus zero sector of the Witten $r$-spin theory, which endows the underlying vector space with the structure of a Dubrovin--Frobenius manifold equivalent to the canonical Frobenius structure on the versal deformation of the $A_{r-1}$-singularity. The algebra on tangent spaces is semisimple outside the discriminant of $A_{r-1}$. In \cite{PPZ15,PPZ19}, Pandharipande--Pixton--Zvonkine studied two special semisimple directions of this Dubrovin--Frobenius manifold, which they refer to as the shifted Witten classes. Their $3$-spin construction yields Pixton's tautological relations on $\overline{\mc{M}}_{g,n}$, which extend the established Faber--Zagier relations on $\mc{M}_g$ and are conjectured to be the full set of relations in the tautological ring of $\overline{\mc{M}}_{g,n}$ \cite{Pix13}. Although the higher spin relations are implied by those for $r = 3$, the construction for general $r$ found several applications, like bounds on the Betti numbers of the tautological ring of $\mc{M}_g$ and polynomiality properties in $r$ of the Witten class. This polynomiality was employed in \cite{KLLS18,GKLS19,GZ21} to set $r=\frac12$, yielding a simplified set of relations that imply various properties of the tautological ring of the moduli space of curves.

As mentioned earlier, for every semisimple CohFT it is known that there is a local spectral curve whose topological recursion correlators compute the CohFT correlators. However, it is not clear which families of local spectral curves glue to a global one. This problem was studied in \cite{DNOS19} using the notion of a Dubrovin superpotential, and the authors show that shifted Witten classes come from global spectral curves.

Our goal is to further explore the two specific shifts of the Witten $r$-spin class considered in \cite{PPZ19} from the topological recursion perspective, that is to determine and study the two corresponding families of global spectral curves explicitly. We state these results here, but refrain from explaining the notation and details, instead leaving them for the bulk of the paper.

\begin{samepage}
\begin{introthm}[{Theorems \ref{thm:SC:Wspin:hat} and \ref{thm:SC:Wspin:tilde}}] \label{thm:spectral:curves}
	The descendant intersection theory of the shifted Witten classes can be computed using topological recursion on the following global spectral curves.
	\begin{enumerate}
		\item Let $V_r$ be the $r$-th Lucas polynomial of the second kind. The topological recursion correlators $\widehat{\omega}_{g,n}^{r,\epsilon}$ computed from the spectral curve on $\P^1$ given by
		\[
			x(z) = V_r(z,\epsilon) \, ,
			\qquad\quad
			y(z) = z \, ,
			\qquad\quad
			\omega_{0,2}(z_1,z_2) = \frac{dz_1 dz_2}{(z_1 - z_2)^2} \, ,
		\]
		encode descendant integrals of the $(\epsilon r \cdot v_{r-2})$-shifted Witten $r$-spin class $\widehat{W}^{r,\epsilon}$:
		\[
		\begin{split}
			&
			\widehat{\omega}_{g,n}^{r,\epsilon}(z_1,\dots,z_n) = \\
			&
			=
			(-r)^{g-1}
			\sum_{a_1,\dots,a_n = 0}^{r-2}
			\int_{\overline{\mc{M}}_{g,n}}
			\widehat{W}^{r,\epsilon}_{g,n}(v_{a_1} \otimes\cdots\otimes v_{a_n})
			\prod_{i=1}^n \sum_{k_i \ge 0} \psi_i^{k_i} d\hat{\xi}^{k_i,a_i}(z_i) \, ,
		\end{split}
		\]
		where $d\hat{\xi}^{k_i,a_i}(z_i)$ are explicit differentials.

		\item The topological recursion correlators $\widetilde{\omega}_{g,n}^{r,\epsilon}$ computed from the spectral curve on $\P^1$  given by
		\[
			x(z) = z^r - r \epsilon z\,,
			\qquad\quad
			y(z) = z\,,
			\qquad\quad
			\omega_{0,2}(z_1,z_2) = \frac{dz_1 dz_2}{(z_1 - z_2)^2} \, .
		\]
		encode descendant integrals of the $(\epsilon r\cdot v_1)$-shifted Witten $r$-spin class $\widetilde{W}^{r,\epsilon}$:
		\[
		\begin{split}
			&
			\widetilde{\omega}_{g,n}^{r,\epsilon}(z_1,\dots,z_n) = \\
			&
			=
			(-r)^{g-1}
			\sum_{a_1,\dots,a_n = 0}^{r-2}
			\int_{\overline{\mc{M}}_{g,n}}
			\widetilde{W}^{r,\epsilon}_{g,n}(v_{a_1} \otimes\cdots\otimes v_{a_n})
			\prod_{i=1}^n \sum_{k_i \ge 0} \psi_i^{k_i} d\tilde{\xi}^{k_i,a_i}(z_i) \, ,
		\end{split}
		\]
		where $d\tilde{\xi}^{k_i,a_i}(z_i)$ are explicit differentials. 
	\end{enumerate}
\end{introthm}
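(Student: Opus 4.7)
The plan is to combine the Givental--Teleman reconstruction theorem with the Dunin-Barkowski--Orantin--Shadrin--Spitz (DOSS) correspondence between topological recursion and semisimple cohomological field theories. For both shifts, since $v_{r-2}\cdot\epsilon$ and $v_1\cdot\epsilon$ lie outside the $A_{r-1}$ discriminant when $\epsilon\ne 0$, the shifted classes $\widehat{W}^{r,\epsilon}$ and $\widetilde{W}^{r,\epsilon}$ are semisimple CohFTs on the $A_{r-1}$ Dubrovin--Frobenius manifold, and Teleman's theorem expresses each as the action of a translation and an $R$-matrix on its topological part. The proof then reduces to showing that the translation, $R$-matrix, and Frobenius structure of the two proposed spectral curves, as computed by DOSS from local expansions at the ramification points, match those provided by the Pandharipande--Pixton--Zvonkine construction.

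The first step is to analyse the genus zero data of each spectral curve. For $x(z)=V_r(z,\epsilon)$ and $x(z)=z^r-r\epsilon z$, one identifies the critical points of $x$, the corresponding critical values (canonical coordinates), and the Hessians at these points. Saito's superpotential theory then endows the base with a Dubrovin--Frobenius manifold structure, and a direct comparison with the $A_{r-1}$ structure shows that one recovers precisely the PPZ shifts, thus verifying that the genus zero parts agree.

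The second step is to compute the $R$-matrix and translation from the spectral curves. By DOSS, the $R$-matrix at each ramification point is the formal saddle-point expansion of $\int e^{x(z)/\hbar}\,dz$, while the translation is extracted from the expansion of $y\,dx$ at the ramifications. The two spectral curves in the statement are selected precisely because their saddle-point integrals are governed by two $r$-th order ODEs generalising the classical Airy equation; solving these ODEs produces closed expressions for the $R$-matrix and translation, which are to be matched term by term with those of $\widehat{W}^{r,\epsilon}$ and $\widetilde{W}^{r,\epsilon}$ computed from \cite{PPZ19}. This matching is the content of an auxiliary lemma expected to appear earlier in the paper, where these generalised Airy solutions are studied.

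The main obstacle is the identification of the full $R$-matrix: while the Frobenius structure and the leading term of the translation are essentially combinatorial, showing that the higher-order coefficients of the asymptotic expansion of the generalised Airy functions reproduce the PPZ recursion for the shifted Witten class requires a careful bookkeeping of normalisation constants, signs, and the change of basis between canonical and flat frames. Once both data sets are identified, the DOSS correspondence immediately gives the descendant decomposition stated in Theorem A, and the explicit differentials $d\hat{\xi}^{k_i,a_i}$ and $d\tilde{\xi}^{k_i,a_i}$ are read off from Eynard's auxiliary forms attached to the respective spectral curves.
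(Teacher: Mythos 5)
Your proposal follows essentially the same route as the paper: apply the Eynard--DOSS dictionary to each spectral curve, recognise the Lefschetz-thimble integrals defining the $R$-matrix and translation as solutions of generalised Airy ODEs whose asymptotic expansions reproduce the Pandharipande--Pixton--Zvonkine series $\ms{B}_{r,a}$ and polynomials $P_m(r,a)$, and conclude by Teleman uniqueness after the canonical-to-flat change of basis. The only inaccuracy is in the orders of the ODEs: for the $v_{r-2}$ shift the relevant equation is \emph{second} order (the Airy--Lucas equation), and for the $v_1$ shift it is of order $r-1$, not $r$ -- but this does not affect the validity of the strategy.
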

\end{samepage}

Part (2) of \cref{thm:spectral:curves} can be deduced from the general result of \cite{DNOS19}  using the  theory of Dubrovin's superpotential. Part (1), on the other hand, is new (see \cref{rem:DS} for more details). We also note here that the properties of the Lucas polynomials of the second kind explain the appearance of the trigonometric functions in the topological field theory and quantum product calculated in \cite{PPZ19} for the $ v_{r-2} $-shift. 

Our method of proof is completely different to the one of \cite{DNOS19} and thus we believe that there is a strong motivation to do our analysis. Before presenting this motivation and summarising the different perspectives that already exist in the literature, let us point out two immediate corollaries of the above theorem. 

First of all, by taking the limit of the differentials of the topological recursion as $ \epsilon \to 0 $, we prove (\cref{thm:limit}) that the correlators of the $r$-spin CohFT satisfy the Bouchard--Eynard topological recursion for the $r$-Airy spectral curve, a result that was first proved in \cite{DNOS19}. Then, using the equivalence of the Bouchard--Eynard topological recursion with certain $ W $-algebra constraints proved in \cite{BBCCN23}, and comparing these $ W $-constraints with the ones obtained in \cite{AvM92}, we deduce Witten's $r$-spin conjecture (\cref{thm:Witten}) that was first proved by Faber--Shadrin--Zvonkine \cite{FSZ10}. Witten's $r$-spin conjecture is a very powerful result which claims that $r$-spin intersection numbers are governed by the $r$-KdV hierarchy, and can be used to compute the descendant integrals recursively.

\addtocontents{toc}{\protect\setcounter{tocdepth}{1}}
\subsection*{Our motivation and method of proof}

One of our initial motivations for this paper was to explore if we could recover Witten's $r$-spin conjecture, i.e.~Faber--Shadrin--Zvonkine theorem, from topological recursion. In addition, we wanted to explicitly investigate all the elements of the identification between semisimple CohFTs and topological recursion \cite{DOSS14} in the case of shifted Witten classes.

As mentioned earlier, the family of spectral curves corresponding to the shifted Witten class was  identified in \cite{DNOS19} where the function $ x $ of the spectral curve is the Dubrovin superpotential of the associated Dubrovin--Frobenius manifold. Here we make the identification completely explicit for the shifts along $ v_1 $ and $ v_{r-2} $, and perform all the calculations necessary to apply the result of \cite{DOSS14}. The family of spectral curves associated to the $v_{r-2}$-shift corresponds to the $r$-th Lucas polynomials of the second kind, which has interesting properties and to our knowledge has never been considered before in the context of topological recursion. 

In this paper, we develop a method to compute the $R$-matrix and translation associated to a spectral curve with simple ramifications, by identifying the objects built out of the spectral curve following the prescription of \cite{DOSS14} as integral representations of solutions to families of differential equations. For both shifted Witten classes, we find generalisations of the classical Airy differential equation
\[
	\ddot{u}(t) = t \, u(t) 
\]
in two different directions. We study the asymptotic expansions of solutions to these differential equations thoroughly and find that the $R$-matrices and translations of the Witten classes shifted along the $v_1$ and $v_{r-2}$ directions can be expressed in terms of these expansions.

\begin{introthm}[{Propositions~\ref{prop:LAasymp} and \ref{prop:hyper:airy}}] \label{thm:expansions}
	Let $r\geq 3$ be a fixed integer. 
	\begin{enumerate}
		\item
		Let $0\leq a\leq r-2$ be a fixed integer. The differential equation
		\[
		 	\ddot{u}(t) = t^{r-2} \, u(t) + \frac{a}{t} \, \dot{u}(t)
		\]
		admits solutions $\hAi_{r,a}^j$ with $j=1,\dots,r-1$, called Airy--Lucas functions. Their asymptotic expansions as $t \to \infty$ determine the hypergeometric series $\ms{B}_{r,a}(u)$ defined in \cref{eqn:B:series}. The $ R $-matrix of the Witten class shifted along the $ v_{r-2} $ direction is constructed out of the series $\ms{B}_{r,a}(u)$.

		\item
		The differential equation
		\[
			u^{(r-1)}(t) = (-1)^{r-1} \, t \, u(t) \,
		\]
		admits solutions $\tAi_{r,k}$, $k=0,\dots,r-2$, called hyper--Airy functions. Their asymptotic expansions as $t \to \infty$ determine the polynomials $P_m(r,a) $ defined in \cref{eqn:PPZ:polys}. The $ R $-matrix of the Witten class shifted along the $ v_1 $ direction is constructed out of the polynomials $ P_m(r,a) $ .
	\end{enumerate}
\end{introthm}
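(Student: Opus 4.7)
My plan is to construct, for each of the two ODEs, a family of solutions as Laplace-type contour integrals naturally attached to the local geometry of the corresponding spectral curve, and then to read off the $R$-matrix of the relevant shifted Witten class from the large-$t$ asymptotic expansion of these solutions via the DOSS reconstruction.

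For part (2), the hyper--Airy case, I would realise the $r-1$ independent solutions $\tAi_{r,k}$ for $k=0,\dots,r-2$ as contour integrals
\[
\tAi_{r,k}(t) \;=\; \int_{\gamma_k} \exp\!\Bigl(\tfrac{z^{r}}{r} - t\,z\Bigr)\, dz,
\]
where $\gamma_k$ is a steepest-descent contour for the phase $\Phi(z,t)=z^{r}/r - tz$ going from infinity in one Stokes sector to infinity in an adjacent one, so as to ensure decay at both ends. Integration by parts on the contour directly yields the ODE $u^{(r-1)}=(-1)^{r-1}\,t\,u$. The saddle-point method applied at each of the $r-1$ critical points $z_{*}=\zeta\,t^{1/(r-1)}$ of $\Phi$ produces a formal expansion in inverse fractional powers of $t$ whose coefficients are determined recursively by the ODE; matching this recursion against the defining relation of the polynomials $P_m(r,a)$ from \cref{eqn:PPZ:polys} pins them down explicitly. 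Since the $v_{1}$-shifted Witten class corresponds to the spectral curve $x=z^{r}-r\epsilon z$, $y=z$, the DOSS recipe expresses its $R$-matrix directly in terms of this asymptotic data, hence in terms of the $P_m(r,a)$.

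For part (1), the Airy--Lucas case, I would adapt the same strategy to the spectral curve $x(z)=V_r(z,\epsilon)$. The $r-1$ solutions $\hAi_{r,a}^{j}$ are to be realised as Laplace-type integrals along steepest-descent contours $\Gamma_j$ through the critical points of a phase function built from the local inverse branches of $V_r$. Inserting a WKB-type ansatz $\exp(S_j\,t)\,t^{\alpha}\sum_m c_m\, t^{-\beta m}$ into the ODE yields an explicit recursion on the $c_m$ whose closed form is a hypergeometric series; a coefficient-by-coefficient comparison with \cref{eqn:B:series} identifies this series with $\ms{B}_{r,a}(u)$. Applying the DOSS prescription to the local expansion of these integrals near each ramification point of $V_r$ then rewrites the $R$-matrix of the $v_{r-2}$-shift entirely in terms of $\ms{B}_{r,a}(u)$.

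The main obstacle lies in part (1). The hyper--Airy equation is a classical higher-order Airy equation whose saddle-point analysis is essentially standard, so part (2) reduces largely to bookkeeping and identification of constants. In part (1), by contrast, the first-order term $(a/t)\,\dot{u}$ couples the index $a$ nontrivially into the asymptotic exponents, and handling all $r-1$ branches of $V_r^{-1}$ simultaneously requires a careful choice of contours and uniform asymptotic control. The hypergeometric nature of the expansion should emerge naturally from the structure of the recursion, but verifying that the precise combinatorial coefficients agree with those of $\ms{B}_{r,a}(u)$, and are compatible with the DOSS formula, is the delicate step.
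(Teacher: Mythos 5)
Your overall strategy --- integral representations along Lefschetz thimbles, steepest descent for the leading term, an ODE-driven recursion for the full expansion, and identification with the DOSS $R$-matrix --- is exactly the one the paper follows, and for part (2) your plan matches the paper's proof essentially step for step: the contour integrals $\int_{\tilde C_k} e^{w^r/r - tw}\,dw$ solve the $(r-1)$-st order equation by a one-line integration by parts, and the recursion extracted from the ODE reproduces \cref{eqn:PPZ:polys}.

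For part (1), however, there is a concrete gap: you never specify how the index $a$ enters the integral representation, and a ``phase function built from the inverse branches of $V_r$'' alone cannot produce $r-1$ distinct series $\ms{B}_{r,a}$, $a=0,\dots,r-2$, since varying only the contour $\Gamma_j$ changes $j$, not $a$. The missing ingredient is the polynomial prefactor in the integrand: the paper defines $\hAi_{r,a}^{j}(t)$ as $\int_{\hat C_j} U_{a+1}(w,t)\,e^{\frac{1}{r}V_r(w,t)}\,dw$ (up to normalisation, \cref{eqn:LAiry}), where $U_{a+1}$ is the Lucas polynomial of the first kind; this prefactor arises naturally when the DOSS integrand $\xi^i\,dx$ is decomposed via the identity $U_r(w,t)/\bigl(w-2\sqrt{t}\cos(i\pi/r)\bigr)=\sum_p c_p\,U_{p+1}(w,t)$ (\cref{lem:Uroot}), and it is what couples $a$ into both the exponent $t^{(2a+2-r)/4}$ and the coefficient $\sin((a+1)j\pi/r)$ of the leading asymptotics. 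Moreover, verifying that these integrals solve the second-order equation with the $\frac{a}{t}\dot u$ term is \emph{not} a direct integration by parts: one must exhibit the integrand of $\bigl(\frac{d^2}{dt^2}-t^{r-2}-\frac{a}{t}\frac{d}{dt}\bigr)\hAi_{r,a}^j$ as a total $w$-derivative, which rests on several derivative, conversion and quadratic identities for the Lucas sequences (\cref{prop:LA:ODE:solution}). Your WKB ansatz would indeed recover $\ms{B}_{r,a}$ once the correct leading behaviour is known, but without the explicit integrand neither the leading term nor the fact that one has genuine solutions of the ODE is established. A related point your plan does not address is that for fixed $a$ the $r-1$ functions $\hAi_{r,a}^j$ are linearly dependent for $j$ of equal parity (the equation is only second order), which is precisely the feature that makes a rank-$(r-1)$ $R$-matrix expressible through a single two-term hypergeometric series.
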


We find part (1) of \cref{thm:expansions} surprising because a second order differential equation is controlling the $ R $-matrix for a CohFT of rank $ r-1 $. This remarkable feature explains the existence of a closed formula for the $R$-matrix in terms of the hypergeometric series $\ms{B}_{r,a} $ for the shift along $ v_{r-2} $ as found by \cite{PPZ19}. Notice that the shifts $ v_1 $ and $ v_{r-2} $ coincide when $ r=3 $, and both differential equations considered in \cref{thm:expansions} collapse to the classical Airy differential equation. In this case, the relation between the solutions of the Airy differential equation and the  Faber--Zagier tautological relations was known \cite{BJP15}, and thus our theorem can be viewed as a generalisation of this result\footnote{ In work in progress, Rosset and Zvonkine study the $ R $-matrices constructed using solutions to the family of differential equations $u^{(r-1)}(t) = (-1)^{r-1} \, t^c \, u(t)$, with $c\in\mathbb{R}$ a real parameter. They recover the relation to the Witten $r$-spin class for $c=1$ and establish a relation to Gromov--Witten invariants of projective spaces for $c\rightarrow\infty$.}. 

In the limit $\epsilon \to 0$, the spectral curves corresponding to the shifted Witten classes go to the $r$-Airy spectral curve $y^r=x$. The $ r $-Airy spectral curve can be quantised into the $r$-Airy quantum curve \cite[section 7]{BE17}: 
\[
	\biggl( \hbar^r \frac{d^r}{dx^{r}} - x \biggr) \psi(x) = 0 \,,
\] 
which corresponds to the $r$-Airy differential equation when $\hbar \rightarrow 1$. On the other hand, the $R$-matrix and translation associated to the spectral curve corresponding to the shifted Witten $r$-spin class $\widetilde{W}^{r,\epsilon}$ are expressed in terms of solutions of the $(r-1)$-Airy differential equation.

The asymptotic expansions we consider here are useful beyond the applications we had in mind in this paper. We rely strongly on these explicit computations in \cite{CGG22}, where we define the negative analogue of the Witten $r$-spin classes and prove $W$-constraints for the associated descendant integrals. We also conjecture that these $W$-constraints imply $r$-KdV, which is the negative analogue of Witten's $r$-spin conjecture, and prove it for low values of $r$ including Norbury's conjecture \cite{Nor23}.

Finally, as an application, we recover Witten's $r$-spin conjecture using topological recursion. We emphasise that our result uses the Givental--Teleman reconstruction of semisimple CohFTs \cite{Tel12}. The latter is a strong result that was not available when the proof of \cite{FSZ10} came out, and which was used in \cite{PPZ15} for the shifted Witten classes that we consider. Our result also uses that topological recursion for the generalised $r$-Airy spectral curve of \cref{thm:spectral:curves}, part (1), commutes with the limit of the parameter $\epsilon \rightarrow 0$, which we prove here for a specific class of spectral curves. A more general study of limits in topological recursion will appear in \cite{BBCKS23}.
	
\subsection*{Relation to existing work}
\label{sec:introexistingwork}

Witten's conjecture for $r=2$ has received many proofs by now, for instance \cite{Kon92,Mir07+,OP09,ABCGLW20}. The general $r$ case was only proved by Faber--Shadrin--Zvonkine in \cite{FSZ10} building on the work of Givental \cite{Giv03}. For $r > 2$, even the construction of the Witten $r$-spin class is remarkably intricate. In genus $0$, the construction was first carried out by Witten \cite{Wit93} using $r$-spin structures, which endow every point in $\overline{\mc{M}}_{g,n}$ with the additional structure of an $r$-th root of the log canonical bundle twisted at the marked points. The construction of the Witten class in higher genera was first carried out by Polishchuk--Vaintrob \cite{PV00}, and later simplified by Chiodo \cite{Chi06} using $K$-theoretic methods.

Here, we summarise the topological recursion literature around the Witten $r$-spin class, i.e. the fact that the topological recursion correlators for the $r$-Airy spectral curve compute $r$-spin intersection numbers. The result was originally conjectured by Bou\-chard and Eynard based on numerical calculations. The proofs of the following results rely on the Faber--Shadrin--Zvonkine result \cite{FSZ10}.

\begin{itemize}
	\item In \cite{DNOS19}, the authors gave the first proof that $r$-spin intersection numbers satisfy topological recursion applied to the $r$-Airy spectral curve. The theorem is an application of their main result, which claims that the Dubrovin superpotential provides a spectral curve for the CohFT associated to the corresponding Dubrovin--Frobenius manifold. The result then follows from the general consideration that limits of the classical topological recursion applied to a spectral curve with simple ramifications coincide with the corresponding Bouchard--Eynard topological recursion (see \cref{rem:limits} for subtleties related to limits).
	
	\item In \cite{BBCCN23}, higher quantum Airy structures are introduced as generalisations of Kontsevich--Soibelman quantum Airy structures by allowing differential operators of arbitrary order, instead of just quadratic ones. The authors show that the topological recursion is equivalent to certain $W$-constraints realised as higher quantum Airy structures. In particular, they  identify the higher Airy structure corresponding to the topological recursion for the $r$-Airy curve with the $W$-constraints which were proved in \cite{AvM92} to be equivalent to the $r$-KdV hierarchy together with the string equation. Then, Witten's $r$-spin conjecture implies the topological recursion result for $r$-spin intersection numbers.
	
	\item In \cite{BCEG21}, the authors built generalised Kontsevich graphs whose generating series satisfy topological recursion for a large class of spectral curves. This class of spectral curves includes the family of curves corresponding to the Witten class shifted along the $v_1$ direction. On the other hand, the generating series of graphs is identified with a generalised Kontsevich matrix model, which is showed in \cite{AvM92} to be the unique tau function of the $r$-KdV hierarchy satisfying the string equation. Again, via Witten's $r$-spin conjecture, the generating series of certain graphs is identified with the generating series of intersection numbers, which also implies the topological recursion for $r$-spin intersection numbers.
\end{itemize}

\subsection*{Acknowledgements} 

We thank Alessandro Chiodo, Sergey Shadrin, Di Yang and Dimitri Zvonkine for useful discussions. We are grateful to the referees for their helpful comments and corrections.

S.~C.~is supported by the European Research Council (ERC) under the European Union's Horizon 2020 research and innovation programme (grant agreement  No.~ERC-2016-STG 716083 ``CombiTop''). E.~G.-F.~was supported by the same grant when this work started. N.~K.~C.~thanks the Max Planck Institute for Mathematics for the excellent working conditions provided. This work is partly a result of the ERC-SyG project, Recursive and Exact New Quantum Theory (ReNewQuantum) which received funding from the European Research Council (ERC) under the European Union's Horizon 2020 research and innovation programme under grant agreement No 810573. A.~G. has been supported by the Institut de Physique Th\'{e}orique Paris (IPhT), CEA, Universit\'{e} de Saclay. We thank the Institut de Math\'ematiques de Jussieu - Paris Rive Gauche for its hospitality. 

\addtocontents{toc}{\protect\setcounter{tocdepth}{2}}

\section{Witten classes and topological recursion}
\label{sec:Witten:TR}

\subsection{Cohomological field theories}

Let us recall some facts about cohomological field theories, CohFTs for short, and the Givental action. The main example treated in this article is that of the Witten $ r $-spin classes.

\begin{definition}
	Let $V$ be a finite dimensional $\Q$-vector space with a non-degenerate symmetric $2$-form $\eta$. A \emph{cohomological field theory} on $(V,\eta)$ consists of a collection $\Omega = (\Omega_{g,n})_{2g-2+n>0}$ of elements
	\begin{equation}
		\Omega_{g,n} \in H^{\bullet}(\overline{\mc{M}}_{g,n}) \otimes (V^{\ast})^{\otimes n}
	\end{equation}
	satisfying the following axioms.
	\begin{enumerate}
		\item[i)] Each $\Omega_{g,n}$ is $S_n$-invariant, where the action of the symmetric group $S_n$ permutes both the marked points of $\overline{\mc{M}}_{g,n}$ and the copies of $(V^{\ast})^{\otimes n}$.

		\item[ii)] Consider the gluing maps
		\begin{equation}
		\begin{aligned}
			q \colon& \overline{\mc{M}}_{g-1,n+2} \longrightarrow \overline{\mc{M}}_{g,n}\,, \\
			r_{h,I} \colon& \overline{\mc{M}}_{h,|I|+1} \times \overline{\mc{M}}_{g-h,|J|+1}  \longrightarrow \overline{\mc{M}}_{g,n}\,,
			\qquad
			I \sqcup J = \set{1,\dots,n}.
		\end{aligned}
		\end{equation}
		Then
		\begin{equation}
		\begin{aligned}
			q^{\ast}\Omega_{g,n}(v_1 \otimes \cdots \otimes v_n)
			& =
			\Omega_{g-1,n+2}(v_1 \otimes \cdots \otimes v_n \otimes \eta^{\dag})\,, \\
			r_{h,I}^{\ast} \Omega_{g,n}(v_1 \otimes \cdots \otimes v_n)
			& =
			(\Omega_{h,|I|+1} \otimes \Omega_{g-h,|J|+1}) \Biggl( \bigotimes_{i \in I} v_i \otimes \eta^{\dag} \otimes \bigotimes_{j \in J} v_j \Biggr),
		\end{aligned}
		\end{equation}
		where $\eta^{\dag} \in V^{\otimes 2}$ is the bivector dual to $\eta$.
	\end{enumerate}
	If the vector space comes with a distinguished element $\bm{1} \in V$, we can also ask for a third axiom:
	\begin{enumerate}
		\item[iii)] Consider the forgetful map
		\begin{equation}
			p \colon \overline{\mc{M}}_{g,n+1} \longrightarrow \overline{\mc{M}}_{g,n}\,.
		\end{equation}
		Then
		\begin{equation}
		\begin{aligned}
			p^{\ast} \Omega_{g,n}(v_1 \otimes \cdots \otimes v_n)
			& =
			\Omega_{g,n}(v_1 \otimes \cdots \otimes v_n \otimes \bm{1})\,, \\
			\Omega_{0,3}(v_1 \otimes v_2 \otimes \bm{1})
			& =
			\eta(v_1,v_2)\,.
		\end{aligned}
		\end{equation}
	\end{enumerate}
	In this case, $\Omega$ is called a \emph{cohomological field theory with a flat unit}.
\end{definition}

The structure of a  cohomological field theory turns $V$ into a commutative algebra. The product structure is known as the \textit{quantum product}, denoted $ \bullet $, and is defined as follows: 
\begin{equation}
	\eta(v_1 \bullet v_2,v_3) = \Omega_{0,3}(v_1 \otimes v_2 \otimes v_3)\,.
\end{equation}
Commutativity follows from (i), associativity from (ii). If the cohomological field theory has a flat unit, the quantum product is unital, with $\bm{1} \in V$ being the identity by (iii).

The degree $0$ part of a CohFT
\begin{equation}
	w_{g,n} \coloneqq \deg_0{\Omega_{g,n}} \in H^{0}(\overline{\mc{M}}_{g,n}) \otimes (V^{\ast})^{\otimes n} \cong (V^{\ast})^{\otimes n}
\end{equation}
is again a CohFT. More precisely, it is a \emph{$2d$ topological field theory} (TFT for short), and is uniquely determined by the values of $\deg_0{\Omega_{0,3}}$ and by the bilinear form $\eta$ (or equivalently, by the associated quantum product). In particular, we will refer to a TFT as being unital and/or semisimple when the associated algebra is so (after extension to $\C$).

In \cite{Giv01}, Givental defined a certain action on Gromov--Witten potentials, and this action was lifted to CohFTs in the work of Teleman \cite{Tel12}. We recall here the basic definitions, and refer to \cite{PPZ15} for more details.

\begin{definition}
	Fix a vector space $V$ with a non-degenerate symmetric bilinear form $\eta$. An \emph{$R$-matrix} is an element $ R(u) = \Id + \sum_{k=1}^\infty R_k u^k \in \Id + u \End (V) \bbraket{u}$ satisfying the symplectic condition:
	\begin{equation}
		R(u) R^{\dag}(-u) = \Id\,.
	\end{equation}
	Here $R^{\dag}$ is the adjoint with respect to $\eta$. The inverse matrix $R^{-1}(u)$ also satisfies the symplectic condition.
\end{definition}

From a CohFT $\Omega$ on $(V,\eta)$ together with an $R$-matrix, one can define a new CohFT on $(V,\eta)$, denoted $R\Omega = (R\Omega_{g,n})_{2g-2+n > 0}$. This defines a left group action on the set of CohFTs on $(V,\eta)$.

There is another type of action on the space of CohFTs known as a translation.

\begin{definition}
	A \emph{translation} is an element $T \in u^2\, V\bbraket{u}$, i.e. a $V$-valued power series vanishing in degree $0$ and $1$:
	\begin{equation}
		T(u) = \sum_{d \ge 1} T_d \, u^{d+1} \, ,
		\qquad
		T_d \in V \, .
	\end{equation}
\end{definition}

From a CohFT $\Omega$ on $(V,\eta)$ together with a translation, one can define a new CohFT on $(V,\eta)$, denoted $T\Omega = (T\Omega_{g,n})_{2g-2+n > 0}$. This defines an abelian group action on the set of CohFTs on $(V,\eta)$.

In general, if we start from a CohFT with unit on $(V,\eta,\bm{1})$, acting by an $R$-matrix or by translation does not preserve flatness. However, there is a specific combination of the two which does achieve this.

\begin{proposition}\label{prop:TL:TR}
	Let $\Omega$ be a CohFT on $(V,\eta,\bm{1})$ with a flat unit. Let $R$ be an $R$-matrix, and consider the $V$-valued power series
	\begin{equation}\label{eqn:TL:TR}
		T_{\textup{L}}(u) \coloneqq u \bigl( R^{-1}(u)\bm{1} - \bm{1} \bigr) \, ,
		\qquad
		T_{\textup{R}}(u) \coloneqq	u \bigl( \bm{1} - R^{-1}(u)\bm{1} \bigr) \, .
	\end{equation}
	Then $T_{\textup{L}}R\Omega$ and $RT_{\textup{R}}\Omega$ coincide, and form a CohFT with a flat unit.
\end{proposition}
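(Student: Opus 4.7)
The plan is to verify the two claims in \cref{prop:TL:TR} separately using the graph-sum presentation of the $R$-matrix and translation actions on CohFTs.

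For the equality $T_{\textup{L}} R \Omega = R T_{\textup{R}} \Omega$, I would expand both sides as sums over decorated stable graphs. Recall that $R\Omega_{g,n}$ is a sum over stable graphs $\Gamma$ of type $(g,n)$ where each vertex $v$ is decorated by $\Omega_{g(v),n(v)}$, each edge carries the propagator bivector built from $R^{-1}$, and each external leg carries $R^{-1}(\psi)$; the translation $T\Omega$, on the other hand, is the generating sum $\sum_{m\ge 0} \frac{1}{m!} p_{m*}\Omega_{g,n+m}\bigl(\,\cdots\otimes T(\psi_{n+1})\otimes\cdots\otimes T(\psi_{n+m})\bigr)$ over "dilaton leaves" on one vertex. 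When the two actions are composed, both $T_{\textup{L}} R \Omega$ and $R T_{\textup{R}} \Omega$ become sums over stable graphs decorated as above with extra dilaton leaves attached at each vertex. The key identity
\[
	T_{\textup{L}}(u)/u \;=\; R^{-1}(u)\mathbb{1} - \mathbb{1} \;=\; -\,T_{\textup{R}}(u)/u
\]
allows one to convert a dilaton leaf inserted before the $R$-action (contributing $T_{\textup{R}}(\psi)$ dressed by $R^{-1}$ on the forgotten half-edge) into a dilaton leaf inserted after the $R$-action (contributing $T_{\textup{L}}(\psi)$). Matching these contributions vertex by vertex yields the desired equality.

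To verify flatness of the resulting CohFT, I would check the two parts of axiom (iii) for the common value $RT_{\textup{R}}\Omega$. For the forgetful pullback, insert $\mathbb{1}$ at the extra marked point of $(R T_{\textup{R}}\Omega)_{g,n+1}$ and split the graph sum according to which vertex receives this leg. The $R$-decoration at that leg produces $R^{-1}(\psi_{n+1})\mathbb{1} = \mathbb{1} + T_{\textup{L}}(\psi_{n+1})/\psi_{n+1}$. The $\mathbb{1}$-piece combines with the flatness of $\Omega$ itself to give $p^{*}$ of the graph sum with one fewer external leg, while the $T_{\textup{L}}$-piece is precisely one more dilaton leaf, which is absorbed—using the first part just proved—into the $T_{\textup{R}}$-dilaton sum at that vertex. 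This reproduces $p^*(R T_{\textup{R}}\Omega)_{g,n}$. The normalization $\Omega_{0,3}(v_1\otimes v_2\otimes\mathbb{1})=\eta(v_1,v_2)$ reduces to a direct computation on the single trivial graph contributing in genus zero with three points, using $R(0) = R^{-1}(0) = \Id$ and the original flatness of $\Omega$.

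The main obstacle lies in the combinatorial bookkeeping of the first step: carefully matching the dilaton-leaf insertions of $T_{\textup{L}}$ placed after the $R$-action with those of $T_{\textup{R}}$ placed before, keeping track of the $\psi$-class shifts induced by the forgetful push-forward and of the $R$-dressing on half-edges. Making the identification term by term, rather than after partial resummation, is the cleanest way to avoid sign and factorial discrepancies, and once the matching is established the flatness axioms follow essentially formally from the defining identity \eqref{eqn:TL:TR}.
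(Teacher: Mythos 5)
The paper does not actually prove this proposition: it is recalled from the literature (the text refers to \cite{PPZ15} for details), so there is no in-paper argument to compare against, and your sketch has to stand on its own. As a strategy, the graph-sum expansion is indeed the right framework, but the central step of your argument does not work as stated. In $T_{\textup{L}}R\Omega$ the extra dilaton points are genuine legs of the stable graphs appearing in the expansion of $(R\Omega)_{g,n+m}$, so each such leaf contributes $R^{-1}(\psi)\,T_{\textup{L}}(\psi)=\psi\bigl(R^{-1}(\psi)^2\mathbb{1}-R^{-1}(\psi)\mathbb{1}\bigr)$ before the pushforward; in $RT_{\textup{R}}\Omega$ the leaves live \emph{inside} the vertex contributions $(T_{\textup{R}}\Omega)_{g_v,n_v}$ and are therefore \emph{not} dressed by the $R$-graph sum, contributing the bare $T_{\textup{R}}(\psi)=\psi\bigl(\mathbb{1}-R^{-1}(\psi)\mathbb{1}\bigr)$. (Your sketch puts the $R^{-1}$-dressing on the wrong side, but the problem persists either way.) These two power series are different — $R^{-1}(u)T_{\textup{L}}(u)=T_{\textup{R}}(u)$ would force $R^{-1}(u)^2\mathbb{1}=\mathbb{1}$ — so the identity $T_{\textup{L}}=-T_{\textup{R}}$ cannot convert one kind of leaf into the other vertex by vertex, and no term-by-term matching of individual leaves exists.

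The actual content of the proof lies precisely in the effects you defer to "bookkeeping": (i) the comparison $\psi_i=p^*\psi_i+D_i$ under the forgetful pushforward, which mixes a leaf's contribution with boundary terms, and (ii) the graphs that destabilise when the dilaton points are forgotten — genus-zero vertices carrying only a dilaton leaf together with an ordinary leg or edge half-edges — whose contraction is what modifies the leg decorations $R^{-1}(\psi_i)$ and the edge propagators so that the two sides eventually agree. Without organising these contributions the equality simply does not close, and the same issue infects your flatness argument: the piece $T_{\textup{L}}(\psi_{n+1})/\psi_{n+1}=R^{-1}(\psi_{n+1})\mathbb{1}-\mathbb{1}$ begins at order $\psi^1$, not $\psi^2$, so absorbing it "as one more dilaton leaf" produces an extra dilaton-equation term $p_*\psi_{n+1}=2g-2+n$ that must be cancelled explicitly, and one must also justify using the flat-unit axiom for $T_{\textup{R}}\Omega$, which does not have a flat unit on its own. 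I would recommend either carrying out the destabilisation analysis in full or simply invoking the proof in \cite{PPZ15}, as the paper does.
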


\begin{definition}
	Let $\Omega$ be a CohFT on $(V,\eta,\bm{1})$ with a flat unit. Let $R$ be an $R$-matrix. We define the \emph{unit-preserving action} by $R$ as
	\begin{equation}
		R.\Omega \coloneqq T_{\textup{L}}R\Omega = RT_{\textup{R}}\Omega\,.
	\end{equation}
\end{definition}

\subsubsection{Teleman's reconstruction theorem}

One of the main tools in the study of CohFTs is Teleman's reconstruction theorem \cite{Tel12}, which allows one to reconstruct a  homogenous semisimple CohFT from its topological part, an $R$-matrix and a translation. Let us explain the notion of Dubrovin--Frobenius manifolds and homogenous CohFTs (which arise naturally in Gromov--Witten theory) briefly before stating Teleman's reconstruction theorem.

Given a CohFT $\Omega$ on $V$, we can naturally endow (possibly a formal neighbourhood of the origin in) $ V $ with the structure of a Dubrovin--Frobenius manifold by restricting to genus $0$. To be precise, assume that we have a $ d $-dimensional vector space $ V $ with the flat basis $ v_0, \ldots, v_{d-1} $ underlying our CohFT $ \Omega$. Then, we can define the primary genus $0$ potential $ F $ as 
\begin{equation}\label{eqn:defF}
	F (t_0, \ldots, t_{d-1})
	=
	\sum_{\substack{k_0+ \cdots + k_{d-1} = n \\ n\geq 3}}
	\left( \int_{\overline{\mathcal M}_{0,n}} \Omega_{0,n} (v_0^{\otimes k_0} \otimes \cdots \otimes v_{d-1}^{\otimes k_{d-1}} ) \right)
	\prod_{i=0}^{d-1} \frac{t_i^{k_i}}{k_i!} \, ,
\end{equation}
where we view $ t_i $ as the dual coordinate to the basis element $ v_i $. Assuming that the sum in \cref{eqn:defF} converges, $ V $ inherits the structure of a Dubrovin--Frobenius manifold with flat coordinates $ (t_0, \ldots, t_{d-1}) $. All the information of this Dubrovin--Frobenius manifold is encoded in the potential $F$.

We can equip the tangent space at every point $ p $ on the Dubrovin--Frobenius manifold $ V $ with an associative algebra structure given by the \textit{quantum product}
\begin{equation}
	\de_{i} \bullet_p \de_{j}
	=
	\sum_{k,\ell =0}^{d-1} \left.\left( \frac{\partial^3 F}{\partial t_i \partial t_j \partial t_k } \right)\right|_{p} \eta^{k,\ell} \de_{\ell}\,,
\end{equation}
where we introduced the following notation for the vector fields, $\de_i \coloneqq \frac{\partial}{\partial t_i} \in H^0(V, TV) $, which is associated to the basis vector $ v_i $.

We can often equip the Dubrovin--Frobenius manifold $V$ with an additional grading using the notion of an Euler field. An \textit{Euler field} on a Dubrovin--Frobenius manifold $ V $ with flat coordinates $ (t_0, \ldots, t_{d-1})$, is an affine vector field $ E $ satisfying the following conditions.
\begin{itemize}
	\item The vector field $ E $ has the form $E = \sum_i (\alpha_i t_i  + \beta_i ) \de_i\,$.
	
	\item
	The metric $ \eta $ and the quantum product $ \bullet $ are eigenfunctions of the Lie derivative $\mc{L}_E $ with weights $ 2- \delta $ and $1$ respectively, where $ \delta $ is a rational number called the \textit{conformal dimension}.
\end{itemize}

The Euler field $ E $ on the Dubrovin--Frobenius manifold $ V $ can be used to define an action of $ E $ on the CohFT as follows:
\begin{equation}\label{eqn:Eactiongen}
	\begin{split}
		(E.\Omega)_{g,n} \left(\de_{a_1} \otimes \cdots \otimes \de_{a_n}\right)
		& \coloneqq
		\left( \deg + \sum_{l=1}^n \alpha_{a_l} \right) \Omega_{g,n} \left(\de_{a_1} \otimes \cdots \otimes \de_{a_n}\right) \\
		& \qquad +
		p_* \Omega_{g,n+1} \left(\de_{a_1} \otimes  \cdots \otimes \de_{a_n} \otimes \sum_i \beta_i \de_i \right).
	\end{split}
\end{equation}

\begin{definition}
	We say that the CohFT $ \Omega $ is \textit{homogeneous} if there exists an Euler field $E$ such that
	\begin{equation}
		(E.\Omega)_{g,n} = \bigl( (g-1)\delta + n \bigr) \Omega_{g,n}\,.
	\end{equation}
\end{definition}

Finally, we are ready to state Teleman's reconstruction theorem.

\begin{theorem}[{Teleman reconstruction \cite{Tel12}}]\label{thm:Teleman}
	Let $\Omega_{0,n}$ be a genus zero homogeneous semisimple CohFT.
	\begin{itemize}
		\item There exists a unique homogeneous CohFT $\Omega_{g,n}$ that extends $\Omega_{0,n}$ in higher genus.
		
		\item This extended CohFT $\Omega_{g,n}$ is obtained by first applying a translation action $T(u)$ on the topological field theory $ w_{g,n} $ (determined by $\Omega_{0,3}$), and then an $R$-matrix action $R(u)$ to the resulting CohFT.
		
		\item The $R$-matrix and the translation are uniquely specified by the associated Dubrovin--Frobenius manifold and the Euler field.
	\end{itemize}
	If the CohFT $\Omega_{0,n}$ has a flat unit, the translation and the $R$-matrix action can be combined into a unit preserving $R$-matrix action, and thus the extension $\Omega_{g,n}$ is a CohFT with a flat unit.
\end{theorem}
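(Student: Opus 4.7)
The plan is to construct the higher genus extension via the Givental--Teleman graph formula and to establish uniqueness by exploiting the rigidity imposed by the Euler grading.

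The first step is to use semisimplicity of the genus zero CohFT to pass to canonical coordinates on the associated Dubrovin--Frobenius manifold: away from the discriminant there exist $N = \dim V$ idempotents $e_1, \ldots, e_N$ of the quantum product, with respect to which the topological part $w_{g,n}$ decomposes as a sum of $N$ trivial rank-one TFTs, weighted by normalising factors extracted from the metric. The $R$-matrix is then constructed as the unique formal solution at $u = \infty$ of the Dubrovin connection $d - u^{-1}(E \bullet)\, du$ normalised to the identity at $u=0$, where $E$ is the Euler vector field; the symplectic condition $R(u)R^{\dag}(-u) = \Id$ follows from the compatibility of the connection with the flat pairing $\eta$. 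The translation $T(u)$ is extracted from the component of the shift of $E$ transverse to the unit direction.

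For existence, I would define $\Omega_{g,n}$ by first applying $T(u)$ to the topological field theory $w_{g,n}$ and then the $R$-matrix action $R(u)$, both via the Givental sum over stable graphs on $\overline{\mc{M}}_{g,n}$. Verifying the splitting axioms at the boundary maps $q$ and $r_{h,I}$ amounts to checking the compatibility of the graph sum with edge-contractions, which follows formally from the symplectic condition on $R$; the forgetful axiom is handled by \cref{prop:TL:TR}, which recasts the combined construction as a unit-preserving $R$-action and ensures that the flat unit is preserved.

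For uniqueness, suppose $\Omega'$ is another homogeneous extension of $\Omega_{0,n}$. Since the Givental group acts transitively on homogeneous CohFTs sharing the same semisimple genus zero part, $\Omega'$ and $\Omega$ differ by the action of an element $S$ that is the identity in genus zero. The main obstacle is then to show that the grading provided by the Euler field, combined with semisimplicity of the underlying Frobenius manifold, forces $S = \Id$. This is the cohomological heart of Teleman's original argument: a nontrivial such $S$ would produce tautological classes on $\overline{\mc{M}}_{g,n}$ of codimension prohibited by the grading, which must vanish by a careful analysis of the tautological ring, ultimately relying on the fact that the $R$-matrix is already pinned down by the Dubrovin connection and on Mumford-type formulas controlling the graph decomposition of the obstruction classes. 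Once this rigidity step is in place, the refinement to flat-unit CohFTs is immediate from \cref{prop:TL:TR}.
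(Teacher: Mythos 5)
The paper does not prove this statement: it is quoted verbatim from Teleman's classification of semisimple CohFTs, with \cite{Tel12} and \cite{PPZ15} cited for the construction of the $R$-matrix and translation. So there is no in-paper argument to measure you against; what can be assessed is whether your sketch faithfully reproduces the external proof. The constructive half of your outline is essentially right: semisimplicity gives idempotents and a decomposition of the TFT into rank-one pieces, the $R$-matrix is the formal solution at the irregular singularity of the (extended) Dubrovin connection normalised by the Euler field, the symplectic condition follows from compatibility with $\eta$, and existence of the extension is the Givental stable-graph sum, with the flat-unit refinement exactly as in \cref{prop:TL:TR}.

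The genuine gap is in your uniqueness step, which as written is circular. The assertion that ``the Givental group acts transitively on homogeneous CohFTs sharing the same semisimple genus zero part'' is not an available input --- it is essentially a restatement of the classification one is trying to prove, so you cannot use it to reduce uniqueness to showing $S = \Id$. Teleman's actual rigidity argument is an induction on genus in which the homogeneity (Euler grading) is played against a hard topological input that your sketch never names: Harer stability together with the Madsen--Weiss theorem (Mumford's conjecture), which identifies the \emph{stable} cohomology of $\mc{M}_g$ as a polynomial algebra on $\kappa$-classes concentrated in degrees small compared to $g$. This forces the difference of two homogeneous extensions in genus $g$ to be supported on the boundary of $\overline{\mc{M}}_{g,n}$, where it is determined by lower genus data, and the induction closes. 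Your appeal to ``a careful analysis of the tautological ring'' and ``Mumford-type formulas'' does not substitute for this: the obstruction classes are a priori arbitrary cohomology classes, not tautological ones, and it is precisely the Madsen--Weiss computation of the full stable cohomology that kills them. Without identifying that ingredient, the uniqueness claim --- which is the substantive content of the theorem --- is not established.
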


The $ R $-matrix and the translation can be explicitly computed, and we refer the reader to \cite{Tel12,PPZ15,CGG22} for a detailed explanation of this algorithm\footnote{Teleman's algorithm requires one to work over $ \mathbb C $ to compute the $ R $-matrix and the translation $ T $. However, if the original CohFT $ \Omega $ is defined over $ \mathbb Q $, so is the reconstructed CohFT.}.

\subsection{Witten classes}

In this article, we are primarily interested in a certain CohFT known as the Witten $ r $-spin CohFT. For $r \ge 2$, let $V = \Q\braket{v_0,\dots,v_{r-2}}$ with pairing $\eta(v_a,v_b) = \delta_{a+b,r-2}$ and unit $\bm{1} = v_0$. The \textit{Witten $r$-spin theory} is an $(r-1)$-dimensional CohFT with a flat unit
\begin{equation}
	W_{g,n}^{r} \colon V^{\otimes n} \longrightarrow H^{\bullet}(\overline{\mc{M}}_{g,n})\,.
\end{equation}
For $a_1,\ldots,a_n\in\{0,\ldots,r-2\}$, the  Witten $r$-spin class $W_{g,n}^r(v_{a_1} \otimes \cdots \otimes v_{a_n})$ has pure complex degree
\begin{equation}
	D_{g;a}^{r}
	=
	\frac{(r-2)(g-1) + |a|}{r}\,,
\end{equation} where $ |a| \coloneqq \sum_{i=1}^{n} a_i $. If $D_{g;a}^{r}$ is not an integer, the corresponding Witten class vanishes. In genus $0$, the construction was first carried out by Witten \cite{Wit93} using $r$-spin structures, and we briefly recall it here.

Let $\overline{\mc{M}}_{0,n}^{r}(a_1,\dots,a_n)$ be the moduli space parametrising $r$-th roots of the form
\begin{equation}
	L^{\otimes r} \cong \omega_{C}\left( - \sum_{i=1}^n a_i p_i \right) ,
\end{equation}
where $[C,p_1,\dots,p_n] \in \overline{\mc{M}}_{0,n}$ and $\omega_C$ denotes the canonical line bundle on $C$. There is a forgetful map $\pi \colon \overline{\mc{M}}_{0,n}^{r}(a_1,\dots,a_n) \to \overline{\mc{M}}_{0,n}$ that forgets the $r$-th root. Consider the vector bundle $\mc{V}_{0,n}^{r}(a_1,\dots,a_n) \to \overline{\mc{M}}_{0,n}^{r}(a_1,\dots,a_n)$ whose fibre over $[C,p_1,\dots,p_n,L]$ is $H^1(C,L)^{\ast}$. Then the Witten $r$-spin class is defined as the push-forward of the (normalised) top Chern class of $\mc{V}_{0,n}^{r}(a_1,\dots,a_n)$:
\begin{equation}
	\frac{1}{r} \, W_{0,n}^{r}(v_{a_1} \otimes \cdots \otimes v_{a_n})
	=
	\pi_{\ast} c_{\textup{top}}\left( \mc{V}_{0,n}^{r}(a_1,\dots,a_n) \right) .
\end{equation}
The definition of the Witten class in higher genera is much more complicated, since the fibres $H^1(C,L)^{\ast}$ no longer glue together to form a vector bundle. The construction was first carried out by Polishchuk--Vaintrob \cite{PV00}, and was later simplified by Chiodo \cite{Chi06}.

The Witten $ r $-spin class is homogeneous with respect to the Euler field
\begin{equation}
	E = \sum_{a=0}^{r-2} \left( 1-\frac{a}{r} \right) t_a \de_a
\end{equation}
and of conformal dimension $\delta = \frac{r-2}{2}$. However, The Witten $ r $-spin class is not semisimple, so Teleman's reconstruction theorem does not apply. A workaround is to consider a shift of the Witten class along a vector $\gamma \in V$.

\begin{definition}
	The \textit{shifted Witten class} $ \mathsf W^\gamma_{g,n} $, shifted along the vector $ \gamma \in V $, is defined as 
	\begin{equation}
		\mathsf W^{r,\gamma}_{g,n}(v_{a_1} \otimes \cdots \otimes v_{a_n} )  := \sum_{m \ge 0} \frac{1}{m!} \, p_{m,\ast} W_{g,n+m}^{r} \bigl( v_{a_1} \otimes \cdots \otimes v_{a_n} \otimes \gamma^{\otimes m} \bigr) \, ,
	\end{equation} where $ p_m \colon \overline{\mc{M}}_{g,n+m}\rightarrow \overline{\mc{M}}_{g,n}$ denotes the forgetful map that forgets the last $ m $ marked points. 
\end{definition} The sum appearing in the definition of the shifted Witten class is easily seen to be finite due to degree considerations. Moreover, the term of highest degree in  $ \mathsf W^\gamma_{g,n} $ is the Witten class $ W^r_{g,n} $. 

For any shift $ \gamma $, the classes $ \mathsf W^\gamma_{g,n} $ form a CohFT with a flat unit. Moreover, for a generic shift, the shifted Witten class is semisimple and one can apply Teleman's reconstruction theorem to the shifted class. For two specific $1$-parameter families of shifts, namely $\gamma = \epsilon r v_{r-2}$ and $\gamma = \epsilon r v_{1}$, Pandharipande--Pixton--Zvonkine \cite{PPZ19} computed all the ingredients required for Teleman's reconstruction theorem as we now describe.

\subsubsection{The \texorpdfstring{$v_{r-2}$ shift}{shift along (0,...,0,r)}}

Consider the vector $ \gamma = \epsilon r v_{r-2} $, where $ \epsilon \in \mathbb C $ is a formal parameter. For notational convenience, we denote the $ ( \epsilon r v_{r-2}) $-shifted Witten class as
\begin{equation}
	\widehat{W}_{g,n}^{r,\epsilon}(v_{a_1} \otimes \cdots \otimes v_{a_n})
	:= \mathsf W^{r,\epsilon r v_{r-2}}_{g,n}(v_{a_1} \otimes \cdots \otimes v_{a_n})\,.
\end{equation}
By  applying  Teleman's reconstruction theorem, $ 	\widehat{W}_{g,n}^{r,\epsilon} $ can be expressed using a unit-preserving $R$-matrix action as
\begin{equation}
	\widehat{W}_{g,n}^{r,\epsilon} = R.\widehat{w}_{g,n}^{r}\,,
\end{equation}
where the TFT and the $R$-matrix are given by (cf.~\cite[section~2]{PPZ19})
\begin{equation}
	\widehat{w}_{g,n}^{r}(v_{a_1} \otimes \cdots \otimes v_{a_n})
	=
	\epsilon^{\frac{(r-2)(g-1)+|a|}{2}}\left( \frac{r}{2} \right)^{g-1}
	\sum_{k=1}^{r-1} (-1)^{(k-1)(g-1)}
		\frac{\prod_{i=1}^n \sin\bigl(\frac{(a_i+1)k\pi}{r}\bigr)}{\sin^{2g-2+n}(\frac{k\pi}{r})}\,,
\end{equation}
\begin{equation}
	R^{-1}(u)_a^a = \ms{B}_{r,a}^{\textup{even}} ( \epsilon^{-r/2} u ) \, ,
	\qquad
	R^{-1}(u)_a^{r-2-a} = \ms{B}_{r,a}^{\textup{odd}} ( \epsilon^{-r/2} u ) \, ,
\end{equation}
with the $R$-matrix elements being $0$ elsewhere. If $r$ is even, the coefficient at the intersection of the diagonals is set to be $1$. Here $\ms{B}_{r,a}(u)$ are the hypergeometric series ($a = 0,\dots,r-2$)
\begin{equation}\label{eqn:B:series}
	\ms{B}_{r,a}(u)
	=
	\sum_{m \ge 0}\left(
		\prod_{k=1}^m \frac{\bigl( (2k-1)r - 2(a+1) \bigr)\bigl( (2k-1)r + 2(a+1) \bigr)}{k}
	\right)
	\left(
		-\frac{u}{16r^2}
	\right)^{m},
\end{equation}
and we denote by $\ms{B}_{r,a}^{\textup{even}}$ (resp.~$\ms{B}_{r,a}^{\textup{odd}}$) the even (resp.~odd) summands of $\ms{B}_{r,a}$.

\subsubsection{The \texorpdfstring{$v_{1}$ shift}{shift along (0,r,...,0)}}

Consider the vector $ \gamma = \epsilon r v_{1} $, where $ \epsilon \in \mathbb C $ is a formal parameter. We denote the $  ( \epsilon r v_{1}) $-shifted Witten class as 
\begin{equation}
	\widetilde{W}_{g,n}^{r,\epsilon}(v_{a_1} \otimes \cdots \otimes v_{a_n}) := \mathsf W^{r,\epsilon r v_{1}}_{g,n}(v_{a_1} \otimes \cdots \otimes v_{a_n}) \, .
\end{equation}
Again, $ \widetilde{W}_{g,n}  $ is semisimple and is given by a unit-preserving action as
\begin{equation}
	\widetilde{W}_{g,n}^{r,\epsilon} = R.\widetilde{w}_{g,n}^{r}\,,
\end{equation}
where the TFT and the $R$-matrix elements are given by (cf.~\cite[section~4]{PPZ19})
\begin{equation}
	\widetilde{w}_{g,n}^{r}(v_{a_1} \otimes \cdots \otimes v_{a_n})
	=
	\epsilon^{\frac{(g-1)(r-2) + |a|}{r-1} }(r-1)^g \cdot \delta \, ,
\end{equation}
\begin{equation}
	(R^{-1}_m)_a^b = \left( \frac{1}{r(r-1) \epsilon^{r/(r-1)}} \right)^m P_m(r,a) \,,
	\qquad
	\text{if } b+m \equiv a \pmod{r-1}\,,
\end{equation}
and are  $0$ otherwise. Here $\delta$ equals $1$ if $g-1 \equiv |a| \pmod{r-1}$ and $0$ otherwise, while the coefficients $P_m(r,a)$ are computed recursively as
\begin{equation}\label{eqn:PPZ:polys}
	\begin{cases}
		P_m(r,a) - P_m(r,a-1) = r \left( m - \frac{1}{2} - \frac{a}{r} \right) P_{m-1}(r,a-1)\,,
		\qquad
		\text{for } a = 1,\dots,r-2\,, \\
		P_m(r,0) = P_m(r,r-1)\,,
	\end{cases}
\end{equation}
with initial condition $P_0 = 1$.

\subsection{Topological recursion}
\label{subsec:TR}

Topological recursion, TR for short, is a universal procedure that associates a collection of symmetric multidifferentials to a spectral curve, a curve with some extra data \cite{EO07}. What makes TR especially useful is its applications to enumerative geometry: many counting problems are solved by TR, in the sense that the sought numbers are coefficients of the multidifferentials when expanded in a specific base of differentials.

\begin{definition}\label{defn:spectral:curve}
	A \emph{spectral curve} $\mc{S} = (\Sigma,x,y,\omega_{0,2})$ consists of
	\begin{itemize}
		\item a Riemann surface $\Sigma$ (not necessarily compact or connected);
		\item a meromorphic function $x \colon \Sigma \to \C$ such that the differential $dx$  has finitely many zeros (called ramification points) $\alpha_1,\dots,\alpha_r$ that are simple; 
		\item a meromorphic function $y \colon \Sigma \to \C$ such that $dy$ does not vanish at the zeros of $dx$;
		\item a symmetric bidifferential $\omega_{0,2}$ on $\Sigma \times \Sigma$, with a double pole on the diagonal with biresidue $1$, and no other poles.
	\end{itemize}
	The \emph{topological recursion} produces symmetric multidifferentials (also called \emph{correlators}) $\omega_{g,n}$ on $\Sigma^n$, defined recursively on $2g-2+n > 0$ as
	\begin{equation}\label{eqn:TR}
	\begin{split}
		\omega_{g,n}(z_1,\dots,z_n)
		&\coloneqq
		\sum_{i=1}^r \Res_{z = \alpha_i} K_i(z_1,z) \bigg(
			\omega_{g-1,n+1}(z,\sigma_i(z),z_2,\dots,z_n) \\
		&\qquad\quad +
			\sum_{\substack{g_1+g_2 = g \\ N_1 \sqcup N_2 = \{2,\dots,n\}}}^{\text{no $(0,1)$}}
				\omega_{g_1,1+|N_1|}(z,z_{N_1}) \,
				\omega_{g_2,1+|N_2|}(\sigma_i(z),z_{N_2})
		\bigg)\,,
	\end{split}
	\end{equation}
	where $K_i$, called the topological recursion kernels\footnote{In older references, such as \cite{EO07}, the recursion kernel has an overall minus sign, but the convention of this paper is now considered standard. This minus sign would result in a $ (-1)^n $ rescaling of $ \omega_{g,n} $.}, are locally defined in a neighbourhood $U_i$ of $\alpha_i$ as
	\begin{equation}\label{eqn:TR:kernel}
		K_i(z_1,z) \coloneqq \frac{\frac{1}{2} \int_{w = \sigma_i(z)}^z \omega_{0,2}(z_1,w)}{\bigl( y(z) - y(\sigma_i(z)) \bigr) d x(z)}\,,
	\end{equation}
	and $\sigma_i \colon U_i \to U_i$ is the Galois involution near the ramification point $\alpha_i \in U_i$. The ``no $(0,1)$" in equation \eqref{eqn:TR} means that one should omit the cases where $ (g_1, 1+|N_1|)  = (0,1)$ or  $ (g_2, 1+|N_2|)  = (0,1)$. It can be shown that $\omega_{g,n}$ is a symmetric meromorphic multidifferential on $\Sigma^n$ with poles only at the ramification points.
\end{definition}

If we relax the assumptions on the spectral curve by allowing the ramification points of $x \colon \Sigma \to \C$ to be not necessarily simple, we have to apply a generalised version of the topological recursion known as the Bouchard--Eynard topological recursion \cite{BE13, BHLMR14}. The Bouchard--Eynard topological recursion was proved to be well-defined (i.e., it produces symmetric correlators) if and only if an \textit{admissibility condition} is satisfied \cite{BBCCN23}. 

For the sake of  notation and since it is sufficient for this paper, we present the Bouchard--Eynard topological recursion (and state the admissibility condition) only in the case where $ x $ has a single ramification point where $ dx $ has a zero of order $ r-1 $. Around this ramification point, we assume that $ y $ behaves as
\begin{equation}
	y(z) \sim z^{s-r} + O(z^{s-r+1})\,,
\end{equation}
where $1 \leq s \leq r+1 $. Then the admissibility condition states that $ s \equiv \pm 1 \pmod{r} $. 

\begin{definition}\label{def:localTR}
For $r\in\mathbb{Z}_{> 0}$, set $[r]\coloneqq \{1,\dots,r\}$.	Consider an admissible spectral curve such that $d x$ has a single zero of degree $r-1$ at $ z = \alpha $, and let $ \sigma $ denote a  generator of the group of local deck transformations around $ z = \alpha $. Let $o\in\Sigma$ be an arbitrary base point on the spectral curve. The \emph{Bouchard--Eynard topological recursion} produces symmetric multidifferentials $\omega_{g,n}$ on $\Sigma^n$ recursively on $2g-2+n>0$ as:
	\begin{equation}\label{eqn:local:TR}
	\begin{split}
		\omega_{g,n}(z_1,\dots,z_n)
		\coloneqq
		\frac{1}{2\pi\iu} \oint_{z \in C} \sum_{\substack{I\subset [r-1] \\ I\neq \varnothing}}
		(-1)^{|I|+1}
		\frac{\int_{w=o}^{z} \omega_{0,2}(z_1,w)}{\prod_{i\in I} \bigl( \omega_{0,1}(z)-\omega_{0,1}(\sigma^i(z)) \bigr)} & \\
		\times
		\sum_{\substack{J\vdash I\cup \{r\} \\ \sqcup_{i=1}^{\ell(J)} N_i =\{2,\dots,n\} \\ \sum_{i=1}^{\ell(J)} g_i = g+\ell(J)-|I|-1 }}^{\textup{no } (0,1)}
			\prod_{i=1}^{\ell(J)} \omega_{g_i,|J_i|+|N_i|}(\sigma_{J_i}(z),z_{N_i}) \, ,
	\end{split}
	\end{equation}
	where $C$ is a contour around the ramification point $\alpha$, $(\sigma^{1}(z),\dots, \sigma^{r-1}(z),\sigma^{r}(z)=z)$ are the images of $z$ under the local deck transformation group around $\alpha$, and the notation $\sigma_{J}(z)$ stands for $(\sigma^j(z))_{j \in J}$ and $\sigma$. Finally, in the second sum the cases where $(g_i, |J_i|+|N_i|)=(0,1)$ are forbidden. 
\end{definition}

As mentioned previously, the fact that \cref{eqn:local:TR} produces well-defined symmetric correlators was proved in \cite[theorem~E]{BBCCN23}.

\begin{remark}\label{rem:ltog}
	When we have a compact spectral curve, \cite{BE13} defined a global version of the topological recursion, where the main difference from the local version~\eqref{eqn:local:TR} is that the set of local deck transformations $( \sigma^1(z), \ldots, \sigma^{r-1}(z), \sigma^r(z) )$ at the ramification point $z=a$ is replaced by the set of global sheet involutions (which does not depend on the ramification index at each ramification point, but rather on the degree of the branched covering $ x $). In the case of simple ramification points, they prove that it coincides with the standard definition~\eqref{eqn:TR} of the topological recursion, and that in the setting of \cref{def:localTR} it coincides with \cref{eqn:local:TR}.
\end{remark}

\subsubsection{TR-CohFT correspondence}
The topological recursion correlators are closely related to intersection numbers on the moduli space of curves. First, \cite{Eyn14+} proved that the topological recursion  computes certain intersection numbers on $ \overline{\mc{M}}_{g,n} $. Afterwards, \cite{DOSS14} proved that the descendant integrals of a semisimple CohFT are computed by topological recursion on a local spectral curve. Finally, \cite{DNOPS18} established in the converse direction that topological recursion on a \textit{compact} spectral curve computes the descendant integrals of an associated semisimple CohFT (by refining results of \cite{Eyn14+} and connecting it to the Givental formalism). We refer to this collection of results as the ``TR-CohFT correspondence''.

We explain how to compute the   CohFT associated to a compact spectral curve  $ \mc{S} = (\Sigma, x,y,\omega_{0,2}) $ following \cite{DNOPS18}. We will work with CohFTs over $\C$.  Let us fix a global constant $C \in \C^{\times}$. Choose a local coordinate $\zeta_i$ on the neighbourhood  $U_i$ of the ramification point $ \alpha_i  $ such that $\zeta_i(\alpha_i) = 0$ and $x - x(\alpha_i) = - \frac{\zeta_i^2}{2}$. We define the auxiliary functions $\xi^i \colon \Sigma \to \C$ and the associated differentials $d\xi^{k,i}$ as
\begin{equation}
	\xi^i(z)
	\coloneqq
	\int^z \left.\frac{\omega_{0,2}(\zeta_{i},\cdot)}{d\zeta_{i}}\right|_{\zeta_{i} = 0}\,,
	\qquad
	d\xi^{k,i}(z)
	\coloneqq
	d\biggl( \frac{d^k}{dx(z)^k} \xi^{i}(z) \biggr)\,.
\end{equation}
We also set $\Delta^i \coloneqq \frac{dy}{d\zeta_i}(0)$ and $h^i \coloneqq C \Delta^i$. We define a unital, semisimple TFT on $V \coloneqq \C\braket{e_1,\dots,e_r}$ by setting
\begin{equation}
	\eta(e_i,e_j) \coloneqq \delta_{i,j}\,,
	\quad
	\bm{1}
	\coloneqq
	\sum_{i=1}^r h^i e_i\,,
	\quad
	w_{g,n}(e_{i_1} \otimes \cdots \otimes e_{i_n})
	\coloneqq
	\delta_{i_1,\ldots,i_n} (h^i)^{-2g+2-n}\,.
\end{equation}
Define the $R$-matrix $R \in \End(V)\bbraket{u}$ and the translation $T \in u^2 V\bbraket{u}$ by setting
\begin{align}
	\label{eqn:R:matrix:TR}
	R^{-1}(u)^j_i
	& \coloneqq
	- \sqrt{\frac{u}{2\pi}} \int_{\gamma_j}
 		d\xi^i \,
 		e^{\frac{1}{u}(x - x(\alpha_j))}\,, \\
	\label{eqn:translation:TR}
	T^i(u)
 	& \coloneqq
 	u h^i - C \sqrt{\frac{u}{2\pi}} \int_{\gamma_i} dy \, e^{\frac{1}{u}(x - x(\alpha_i))}\,.
\end{align}
Here $\gamma_i$ is the Lefschetz thimble passing through the critical point $\alpha_i$, and the equations are intended as equalities between formal power series in $u$, where on the right-hand side we take the formal asymptotic expansion as $u \to 0$ (cf.~\cref{app:exp:integrals}). Through the Givental action, we can then define a cohomological field theory
\begin{equation}
	\Omega_{g,n} \coloneqq RTw_{g,n} \in H^{\bullet}(\overline{\mc{M}}_{g,n}) \otimes (V^{\ast})^{\otimes n}
\end{equation}
from the data $(w,R,T)$. One direction of the  TR-CohFT correspondence is given by the following theorem (for the other direction, see \cite{DOSS14}). 

\begin{theorem}[{\cite{DNOPS18}}]\label{thm:Eyn:DOSS}
	Suppose we have a compact spectral curve with simple ramification points $(\Sigma, x,y,B)$. Then its topological recursion correlators are given by
	\begin{multline}
		\omega_{g,n}(z_1,\dots,z_n) = \\
		=
		C^{2g-2+n}
		\sum_{i_1,\dots,i_n = 1}^r \int_{\overline{\mc{M}}_{g,n}}
			\Omega_{g,n}(e_{i_1} \otimes\cdots\otimes e_{i_n}) \prod_{j=1}^n \sum_{k_j \ge 0}
				\psi_j^{k_j} d\xi^{k_j,i_j}(z_j) \, .
	\end{multline}
\end{theorem}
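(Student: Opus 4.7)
The approach I would take is to expand both sides as sums over stable graphs decorated with local data at the ramification points, and then match the decorations. On the topological recursion side, by iterating \cref{eqn:TR} one obtains Eynard's graph formula \cite{Eyn11} which writes $\omega_{g,n}$ as a sum over stable graphs of type $(g,n)$ with vertex, edge, leaf and dilaton-leaf weights built out of $(x,y,\omega_{0,2})$ via residues at the ramification points. On the CohFT side, an explicit formula of Givental (also used in the proof of \cref{thm:Teleman}) expresses $RTw_{g,n}$ as a sum over the same stable graphs, with weights encoded by the matrix elements of $R$, the coefficients of $T$, the topological values of $w$, and the $\psi$-class insertions at legs. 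The theorem then reduces to a graph-by-graph identification of these two weight systems.

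The bulk of the work is therefore to perform this matching locally at each ramification point $\alpha_i$. Using the Airy coordinate $\zeta_i$ with $x-x(\alpha_i)=-\zeta_i^2/2$, a leaf decorated with $d\xi^{k,i}$ on the TR side corresponds directly to the leg insertion $\psi^{k}\, e_i$ on the CohFT side, by the very definition of $d\xi^{k,i}$. Expanding $\omega_{0,2}(z,w)$ in the local coordinates at two ramification points produces a symmetric formal series $B^{i,j}_{k,\ell}$; through Laplace's method along the Lefschetz thimble $\gamma_j$, one recognizes this expansion as the asymptotic expansion of
\[
-\sqrt{\tfrac{u}{2\pi}}\int_{\gamma_j} d\xi^i\, e^{(x-x(\alpha_j))/u},
\]
which is exactly $R^{-1}(u)^j_i$ from \cref{eqn:R:matrix:TR}. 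Similarly, the dilaton-leaf weight, obtained by expanding $y\,dx$ at $\alpha_i$, is identified with $T^i(u)$ from \cref{eqn:translation:TR} by the same saddle-point computation. Finally, the TFT weights $w_{g,n}(e_i^{\otimes n})=(h^i)^{2-2g-n}$ at a trivalent vertex of the graph arise as the leading contribution to $\omega_{0,3}$ at a ramification point, and the global factor $C^{2g-2+n}$ accounts for the normalization choice $h^i=C\Delta^i$.

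With these local identifications in place, the proof is concluded by induction on $2g-2+n$. The base cases $(0,3)$ and $(1,1)$ are verified by direct residue computation: $\omega_{0,3}$ expanded in the basis $\{d\xi^{0,i}\}$ reproduces the TFT structure constants, while $\omega_{1,1}$ reproduces $\tfrac{1}{24}\psi$ multiplied by $(h^i)^{-1}$ after summing over ramification points. For the inductive step, the Eynard residue at $\alpha_i$ in \cref{eqn:TR} introduces exactly one new trivalent vertex in the graph expansion, and the kernel $K_i(z_1,z)$, when symmetrized under the local Galois involution $\sigma_i$, produces the requisite $R$-matrix insertion on the new edge plus the vertex weight $(h^i)^{-1}$; this is matched on the other side by the corresponding term in the graph expansion of $RTw_{g,n}$.

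The main obstacle is the careful bookkeeping needed to turn Laplace's asymptotic method into the formal identifications of $R$ and $T$, and to ensure that the symplectic condition $R(u)R^{\dagger}(-u)=\Id$ is indeed satisfied by the formal series defined in \cref{eqn:R:matrix:TR} — this follows from a residue identity for $\omega_{0,2}$ at pairs of ramification points, exploiting that $dx$ has simple zeros and $dy$ does not vanish there. A second delicate point is the interplay of the $R$-matrix and translation actions: the dilaton insertions encoded in $T$ are responsible precisely for the part of $y\,dx$ at $\alpha_i$ not captured by the leading TFT, and their compatibility with the unit-preserving convention of \cref{prop:TL:TR} is what makes the resulting cohomology class a genuine CohFT.
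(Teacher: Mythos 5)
This theorem is not proved in the paper at all: it is imported verbatim from \cite{DOSS14}, so there is no in-paper argument to compare against. Your sketch reproduces essentially the strategy of that reference and of \cite{Eyn11} — graph expansions on both sides with the $R$-matrix and translation identified as Laplace transforms of $\omega_{0,2}$ and $y\,dx$ along Lefschetz thimbles — though be aware that the actual proofs organize the inductive step by showing the graph sum satisfies the (abstract) loop equations and projection property rather than by peeling off one trivalent vertex per residue, and your $(1,1)$ base case omits the self-loop graph whose edge carries an $R$-matrix weight.
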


\section{Generalised Airy functions}
\label{sec:gen:Airy}

We are interested in two possible generalisations of the Airy differential equation $\ddot{u}(t) = tu(t)$, for which we will present a basis of asymptotic solutions constructed from integral representations. This will generalise the asymptotic formula for the Airy function, valid as $t \to \infty$ and $|\arg(t)| < \pi$:
\begin{equation}
	\Ai(t)
	=
	\frac{1}{2\pi\iu} \int_{C} e^{\frac{w^3}{3} - tw} dw
	\sim
	\frac{e^{-\frac{2t^{3/2}}{3}}}{2\sqrt{\pi}} t^{-1/4} \, \sum_{k \ge 0}
 		\frac{(6k)!}{(2k)! (3k)!}
 		\left( -\frac{1}{576 t^{3/2}} \right)^k.
\end{equation}
Here $C$ is the path starting at $e^{-\frac{\pi}{3}} \infty$ and ending at $e^{\frac{\pi}{3}} \infty$. We refer to \cref{app:exp:integrals} for some facts about exponential integrals, Lefschetz thimbles and asymptotic expansions.

\subsection{The \texorpdfstring{$\widehat{\Ai}$}{Airy--Lucas} functions}

For fixed integers $r \geq 3$ and $0 \leq a \leq r-2$, let us consider the second order differential equation
\begin{equation}\label{eqn:LA}
	\ddot{u}(t) = t^{r-2} \, u(t) + \frac{a}{t} \, \dot{u}(t) \,.
\end{equation}
For fixed $ a $, we can define $ r-1 $ solutions that we call \emph{Airy--Lucas functions}, via contour integrals as
\begin{equation}\label{eqn:LAiry}
	\hAi_{r,a}^{j}(t)
	\coloneqq
	\frac{1}{2\pi\sqrt{(-1)^{j}}} \int_{\hat{C}_j} U_{a+1}(w,t) e^{\frac{1}{r} V_r(w,t)} dw\,,
	\qquad\quad
	j = 1,\dots,r-1\,,
\end{equation}
where $\sqrt{(-1)^j}$ is equal to the imaginary unit $\iu = \sqrt{-1}$ if $j$ is odd and is equal to $1$ if $j$ is even. The $U_n$ and $V_n$ are Lucas polynomial sequences (cf.~\cref{app:Lucas}) defined recursively by
\begin{equation}\label{eqn:rec:Lucas}
	\begin{split}
	& \begin{cases}
		U_0(w,t) = 0\,, \\
		U_1(w,t) = 1\,, \\
		U_n(w,t) = w U_{n-1}(w,t) - t U_{n-2}(w,t)\,,
	\end{cases}	
	\\
	& \begin{cases}
		V_0(w,t) = 2\,, \\
		V_1(w,t) = w\,, \\
		V_n(w,t) = w V_{n-1}(w,t) - t V_{n-2}(w,t)\,,
	\end{cases}
\end{split}
\end{equation}
and the contour $\hat{C}_j$ is the Lefschetz thimble passing through the critical point $w = 2\sqrt{t} \cos(\frac{j\pi}{r})$. Notice that the Airy--Lucas functions are defined for generic phases of $t$ (those that avoid Stokes rays). In this case, each Lefschetz thimble passes through a single critical point. As functions of $t$, the Lefschetz thimbles $\hat{C}_j$ might jump when $t$ crosses a Stokes ray; however, since we will be interested in asymptotic solutions only, we will assume $t$ to be away from Stokes rays and work in the good sectors.

In \cref{prop:LA:ODE:solution} we will show that the Airy--Lucas functions $\hAi_{r,a}^{j}$ are indeed solutions to the ODE \eqref{eqn:LA}, and linearly dependent for $j$s with the same parity. We can compute their asymptotics as $t \to \infty$ via the steepest descent method.

\begin{center}
\begin{tabular}{ c | l l }
	\toprule
	$n$ & $U_n(w,t)$ & $V_n(w,t)$
	\\
	\midrule
	$0$ & $0$ & $2$
	\\
	$1$ & $1$ & $w$
	\\
	$2$ & $w$ & ${w}^{2}-2t$
	\\
	$3$ & ${w}^{2}-t$ & ${w}^{3}-3wt$
	\\
	$4$ & ${w}^{3}-2wt$ & ${w}^{4}-4{w}^{2}t+2{t}^{2}$
	\\
	$5$ & ${w}^{4}-3{w}^{2}t+{t}^{2}$ & ${w}^{5}-5{w}^{3}t+5w{t}^{2}$
	\\
	$6$ & ${w}^{5}-4{w}^{3}t+3w{t}^{2}$ & ${w}^{6}-6{w}^{4}t+9{w}^{2}{t}^{2}-2{t}^{3}$
	\\
\bottomrule
\end{tabular}
\end{center}

\begin{lemma}\label{lem:LAexp}
	The asymptotic behaviour as $t \to \infty$ of the Airy--Lucas functions is given by
	\begin{equation}
		\hAi_{r,a}^{j}(t)
		\sim
		\frac{e^{ (-1)^j \frac{2}{r} t^{r/2}}}{\sqrt{\pi r}}
		\sin{ \left( \frac{(a+1)j\pi}{r} \right) }
		t^{(2a+2-r)/4}
		\left(	1	+	O\bigl( t^{-r/2} \bigr)	\right).
	\end{equation}
\end{lemma}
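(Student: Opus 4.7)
The plan is to apply the steepest descent method to the contour integral \eqref{eqn:LAiry} defining $\hAi_{r,a}^{j}$. The exponent is $V_r(w,t)/r$, so the critical points coincide with the zeros of $\partial_w V_r$. A direct induction on the recursion \eqref{eqn:rec:Lucas} yields the useful identity $\partial_w V_r(w,t) = r\, U_r(w,t)$, reducing the problem to locating the zeros of $U_r$.

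The key technical tool I would establish first is the trigonometric parametrisation of the Lucas sequences. Substituting $w = 2\sqrt{t}\cos\theta$ in \eqref{eqn:rec:Lucas} and inducting on $n$, one obtains
\[
	V_n(2\sqrt{t}\cos\theta, t) = 2\, t^{n/2}\cos(n\theta),
	\qquad
	U_n(2\sqrt{t}\cos\theta, t) = t^{(n-1)/2}\,\frac{\sin(n\theta)}{\sin\theta}.
\]
The zeros of $U_r(\cdot,t)$ then correspond to $\theta = j\pi/r$ with $j=1,\ldots,r-1$, recovering the critical points $w_j = 2\sqrt{t}\cos(j\pi/r)$ through which the Lefschetz thimbles $\hat{C}_j$ are defined to pass. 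Evaluating at these saddles,
\[
	V_r(w_j,t) = 2(-1)^j t^{r/2},
	\qquad
	U_{a+1}(w_j,t) = t^{a/2}\,\frac{\sin((a+1)j\pi/r)}{\sin(j\pi/r)},
\]
which immediately yields the desired exponential $e^{(-1)^j 2 t^{r/2}/r}$. Differentiating the identity $V_r(2\sqrt{t}\cos\theta,t) = 2t^{r/2}\cos(r\theta)$ twice in $\theta$ and using $\partial_w V_r|_{w_j}=0$, one gets
\[
	\partial_w^2 V_r(w_j,t) = -\frac{r^2 (-1)^j t^{(r-2)/2}}{2\sin^2(j\pi/r)}.
\]

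The final step is the Gaussian integration around each saddle. For $j$ even, $\partial_w^2 V_r(w_j,t) < 0$, the thimble is locally real, and the Laplace integral contributes $\sqrt{4\pi r\sin^2(j\pi/r)/(r^2 t^{(r-2)/2})} \cdot \sqrt{\pi}$-style factors producing $(2\sqrt{\pi}\sin(j\pi/r)/\sqrt{r})\, t^{-(r-2)/4}$. For $j$ odd, $\partial_w^2 V_r(w_j,t) > 0$, and the descent direction rotates by $\pi/2$; writing $w-w_j = \iu s$ picks up an extra $\iu$, which is precisely cancelled by the prefactor $1/\sqrt{(-1)^j} = 1/\iu$ in \eqref{eqn:LAiry}. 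Assembling all the pieces, the $1/\sin(j\pi/r)$ in $U_{a+1}(w_j,t)$ cancels against the Gaussian factor $\sin(j\pi/r)$, and one arrives at the claimed leading asymptotic
\[
	\frac{e^{(-1)^j 2 t^{r/2}/r}}{\sqrt{\pi r}}\,\sin\!\left(\frac{(a+1)j\pi}{r}\right) t^{(2a+2-r)/4}.
\]
The subleading error of order $t^{-r/2}$ arises from the standard Watson-type expansion, where the natural small parameter at each saddle scales like $1/|\partial_w^2 V_r(w_j,t)|\cdot (\partial_w^3 V_r)^2 \sim t^{-r/2}$.

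The main delicate point is the bookkeeping of the branch of the square root for odd $j$: this is exactly what motivates the seemingly strange normalisation $\sqrt{(-1)^j}$ in \eqref{eqn:LAiry} and, once it is handled, the trigonometric identities for the Lucas polynomials make every other step essentially automatic. (Throughout, I would assume $t$ lies in a sector avoiding Stokes rays, so that the thimbles $\hat{C}_j$ remain in general position and each contains a single saddle.)
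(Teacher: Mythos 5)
Your proposal is correct and follows essentially the same route as the paper: steepest descent at the saddles $w_j = 2\sqrt{t}\cos(j\pi/r)$, using the trigonometric (Chebyshev) parametrisation of the Lucas sequences to evaluate $V_r$, $U_{a+1}$ and $\partial_w^2 V_r$ there, with the $\sqrt{(-1)^j}$ normalisation absorbing the phase of the Gaussian for odd $j$; the paper merely rescales $z = w/(2\sqrt{t})$ first so that the large parameter $s = t^{r/2}$ is explicit and the $O(s^{-1}) = O(t^{-r/2})$ error follows from the standard formula. (Your heuristic for the error term should read $(\partial_w^3 V_r)^2/(\partial_w^2 V_r)^3 \sim t^{-r/2}$ rather than $(\partial_w^3 V_r)^2/|\partial_w^2 V_r|$, but the stated conclusion is right.)
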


\begin{proof}
	From the recursion relations \eqref{eqn:rec:Lucas}, it can be easily seen that Lucas sequences are related to Chebyshev polynomials as
	\begin{equation*}
		U_n(w,t) = t^{\frac{n-1}{2}} \, \mc{U}_{n-1} \bigl( \tfrac{w}{2\sqrt{t}} \bigr) \,,
		\qquad
		V_n(w,t) = 2 t^{\frac{n}{2}} \, \mc{T}_{n} \bigl( \tfrac{w}{2\sqrt{t}} \bigr) \,.
	\end{equation*}
	Here $\mc{T}_m$ and $\mc{U}_m$ are the Chebyshev polynomials of the first and second kind respectively. By applying the change of variables $z = \frac{w}{2 \sqrt{t}}$, we can write
	\begin{equation}\label{eqn:UtoI}
		\hAi_{r,a}^{j}(t)
		=
		\frac{t^{\frac{a+1}{2}}}{\pi \sqrt{(-1)^j}}
		\int_{\hat{\Gamma}_j} \mc{U}_{a}(z) e^{\frac{2}{r} t^\frac{r}{2} \mc{T}_r(z)} dz \, .
	\end{equation}
	Here $\hat{\Gamma}_j$ is the Lefschetz thimble in the $z$-plane that passes through the critical point $z = \cos( \frac{j\pi}{r})$. We can now apply the steepest descent method to the function
	\begin{equation*}
		I(s) = \int_{\Gamma} g(z) e^{s h(z)} dz\,,
	\end{equation*}
	with $s = t^{r/2}$, $g(z) = \mc{U}_{a}(z)$, and $h(z) = \frac{2}{r} \mc{T}_r(z)$. This directly gives the following asymptotic expansion:
	\begin{equation*}
		I(s)
		\sim
		g(z_j)
		\sqrt{\frac{2\pi}{s |h''(z_j)|}}
		e^{s h(z_j)} e^{\iu \vartheta(z_j)}
		\left(
			1 + O\bigl( s^{-1} \bigr)
		\right),
	\end{equation*}
	where the phase $\vartheta$ is given by $\vartheta(z_j) = \frac{\pi - \arg(h''(z_j))}{2}$. By using the properties of the Chebyshev polynomials, we see that
	\[
		g(z_j)
		=
		\frac{\sin((a+1)j\pi/r)}{\sin(j\pi/r)}\,,
		\qquad
		h(z_j) = (-1)^j\frac{2}{r}\,,
		\qquad
		h''(z_j) = (-1)^{j+1} \frac{2r}{\sin^2(j\pi/r)}\,.
	\]
	Thus, we finally get
	\begin{equation*}
		I(t^{r/2})
		\sim
		\sin{ \left( \frac{(a+1)j\pi}{r} \right)}
		\sqrt{\frac{\pi}{t^{r/2} r}}
		e^{ (-1)^j \frac{2}{r} t^{r/2}}
		\sqrt{(-1)^j}
		\left(	1	+	O\bigl( t^{-r/2} \bigr)	\right).
	\end{equation*}
	Plugging this asymptotic expansion into \cref{eqn:UtoI} gives the statement of the \lcnamecref{lem:LAexp}.
\end{proof}

We can get the full asymptotic expansion by means of the differential equation, and the coefficients of the asymptotic expansion are expressed in terms of the hypergeometric series $\ms{B}_{r,a}$ defined by \cref{eqn:B:series}.

\begin{proposition}\label{prop:LAasymp}
	The asymptotic expansion as $t \to \infty$ of the Airy--Lucas function is given by
	\begin{equation}
		\hAi_{r,a}^{j}(t)
		\sim
		\frac{e^{ (-1)^j \frac{2}{r} t^{r/2}}}{\sqrt{\pi r}} \,
		\sin{ \left( \frac{(a+1)j\pi}{r} \right) } \,
		t^{(2a+2-r)/4} \,
		\ms{B}_{r,a} \left( (-1)^{j+1} r t^{-r/2} \right) .
	\end{equation}
\end{proposition}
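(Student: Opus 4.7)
The plan is to bootstrap the leading-order asymptotics established in Lemma~\ref{lem:LAexp} into the full asymptotic series, by exploiting the fact that $\hAi_{r,a}^j$ satisfies the second-order ODE \eqref{eqn:LA} (as will be shown in \cref{prop:LA:ODE:solution}), and then to match the resulting recurrence against \eqref{eqn:B:series}. Concretely, I would write
\[
	\hAi_{r,a}^j(t) = \frac{e^{\phi_j(t)}}{\sqrt{\pi r}}\, \sin\!\left( \tfrac{(a+1)j\pi}{r} \right) t^{\alpha}\, v_j(t),
	\qquad
	\phi_j(t) \coloneqq (-1)^j \tfrac{2}{r}\, t^{r/2},
	\qquad
	\alpha \coloneqq \tfrac{2a+2-r}{4},
\]
where $v_j$ is a formal power series in $t^{-r/2}$ normalised by $v_j(t) = 1 + O(t^{-r/2})$.

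Substituting this ansatz into \eqref{eqn:LA} and dividing through by the non-zero prefactor, two cancellations take place: the identity $(\phi_j')^2 = t^{r-2}$ kills the leading term $t^{r-2} u$, and the specific value of $\alpha$ is tuned so that the coefficient of $t^{r/2-2}\, v_j$ vanishes identically. The surviving equation reads
\[
	\ddot{v}_j + \Bigl( 2(-1)^j\, t^{r/2-1} + \tfrac{1-r/2}{t} \Bigr) \dot{v}_j + \frac{r^2 - 4(a+1)^2}{16\, t^2}\, v_j = 0.
\]
Writing $v_j(t) = \sum_{m\geq 0} c_m^{(j)}\, t^{-rm/2}$ with $c_0^{(j)} = 1$ and extracting the coefficient of $t^{-rm/2-2}$, the only contribution producing $c_{m+1}^{(j)}$ comes from $2(-1)^j\, t^{r/2-1} \dot{v}_j$, and after combining the remaining terms one obtains the first-order recurrence
\[
	c_{m+1}^{(j)} = \frac{(-1)^j}{16 r (m+1)} \bigl( r(2m+1) - 2(a+1) \bigr)\bigl( r(2m+1) + 2(a+1) \bigr)\, c_m^{(j)}.
\]
A direct substitution $u = (-1)^{j+1} r\, t^{-r/2}$ in \eqref{eqn:B:series} yields exactly the same ratio of successive coefficients, so $v_j(t) = \ms{B}_{r,a}\bigl( (-1)^{j+1} r\, t^{-r/2} \bigr)$ as formal power series, which is the claim.

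The only subtle point in my view is justifying that the formal series recovered from the recurrence is genuinely the Poincaré asymptotic expansion of $\hAi_{r,a}^j$, rather than merely a formal solution of \eqref{eqn:LA}. This follows from the standard theory of linear ODEs with an irregular singularity at infinity of rank $r/2$: once Lemma~\ref{lem:LAexp} fixes the dominant exponential factor $e^{\phi_j}$, the subdominant power series is uniquely determined, and the recurrence above indeed governs the genuine asymptotic behaviour. All remaining manipulations are routine but require careful bookkeeping.
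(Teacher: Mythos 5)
Your proposal is correct and follows essentially the same route as the paper: take the leading-order form from Lemma~\ref{lem:LAexp} (whose steepest-descent derivation already guarantees a full asymptotic series in powers of $t^{-r/2}$), substitute the ansatz into the ODE \eqref{eqn:LA}, derive the first-order recurrence $c_{m+1} = \tfrac{(-1)^j}{16r(m+1)}\bigl(r(2m+1)-2(a+1)\bigr)\bigl(r(2m+1)+2(a+1)\bigr)c_m$, and match it against \eqref{eqn:B:series}. The only cosmetic difference is that you factor the power $t^{(2a+2-r)/4}$ out of the series while the paper keeps it inside; the resulting recurrence and conclusion are identical.
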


\begin{proof}
	By \cref{lem:LAexp}, we know that the asymptotic expansion of the Airy--Lucas function has the following form:
	\[
		\hAi_{r,a}^{j}(t)
		\sim 
		\frac{e^{ (-1)^j \frac{2}{r} t^{r/2}}}{\sqrt{\pi r}}
		\sin{ \left( \frac{(a+1)j\pi}{r} \right) }
		t^{(2a+2-r)/4}
		\sum_{m \geq 0} b_m(r,a,j) t^{-(rm)/2}, 
	\]
	where $b_0(r,a,j) = 1$. Then, we can plug this ansatz into the differential \cref{eqn:LA} to obtain a recursion for the coefficients $b_m(r,a,j)$.
	For the sake of notational simplicity, let us define the series
	\[
		v(t) = \sum_{m \geq 0} b_m(r,a,j) t^{(a+1-rm)/2 -r/4} \, .
	\]
	After some elementary algebra, the differential \cref{eqn:LA} yields
	\begin{equation*}
		\ddot{v}(t)
		+
		\left( (-1)^j 2 t^{r/2} - a  \right) t^{-1} \, \dot{v}(t)
		+
		(-1)^j \left( (\tfrac{r}{2}-a-1) t^{r/2} \right) t^{-2}  \, v(t)
		=
		0 \, .
	\end{equation*}
	By picking the power of $t^{(a+1-rm)/2 -r/4 - 2} $, we get
	\begin{equation*}
		b_m(r,a,j)\left[ \left( \tfrac{a+1-rm}{2} - \tfrac{r}{4} \right)
		\left( \tfrac{-a-1-rm}{2} - \tfrac{r}{4} \right) \right]
		=
		(-1)^j r(m+1) \, b_{m+1}(r,a,j) \, .
	\end{equation*}
	This in turns implies that 
	\[
		b_{m+1}(r,a,j)
		=
		(-1)^{j}
		\frac{\big((2m+1)r - (2a+2) \big) \big( (2m+1)r + (2a+2) \big) }{16r(m+1)}
		b_m(r,a,j) \,.
	\]
	We can solve this recursion easily to get
	\[
		b_m(r,a,j) = \frac{(-1)^{jm}}{(16r)^m} \prod_{k=1}^m \frac{\bigl( (2k-1)r - 2(a+1) \bigr)\bigl( (2k-1)r + 2(a+1) \bigr)}{k} \,.
	\]
	Comparing these coefficients with the definition \eqref{eqn:B:series} of $\ms{B}_{r,a}$, we get the thesis.
\end{proof}

\subsection{The \texorpdfstring{$\tAi$}{hyper-Airy} functions}

For $r \ge 3$, let us consider the $r$-th order ordinary differential equation
\begin{equation}\label{eqn:hyperAiryeq}
	u^{(r-1)}(t) = (-1)^{r-1} \, t \, u(t) \, ,
\end{equation}
which is sometimes called higher order Airy equation. We can define $r-1$ (independent) solutions via contour integrals that we call \emph{hyper-Airy functions}, as
\begin{equation}\label{eqn:hyperAiry}
	\tAi_{r,k}(t) \coloneqq \frac{\theta^{k\frac{r-2}{2}}}{2\pi\iu} \int_{\tilde{C}_k} e^{\frac{w^{r}}{r} - t w} dw\,,
	\qquad\quad
	k = 0,\dots,r-2\,.
\end{equation}
where $\theta = e^{\frac{2\pi\iu}{r-1}}$ and $\tilde{C}_k$ is the Lefschetz thimble passing through the critical point $w = t^{1/(r-1)} \theta^k$. As in the previous section, we will be interested in the asymptotic expansion of the above functions, and work in the sectors of the $t$-plane where the Lefschetz thimbles are well-defined.

The hyper-Airy functions indeed satisfy \cref{eqn:hyperAiryeq}:
\begin{equation*}
\begin{split}
	\left( \frac{d^{r-1}}{dt^{r-1}} - (-1)^{r-1} t \right) \tAi_{r,k}(t)
	& =
	(-1)^{r-1} \frac{\theta^{k\frac{r-2}{2}}}{2\pi\iu} \int_{\tilde{C}_k} (w^{r-1} - t) e^{\frac{w^{r}}{r} - t w} dw \\
	& =
	(-1)^{r-1} \frac{\theta^{k\frac{r-2}{2}}}{2\pi\iu} \int_{\tilde{C}_k} \frac{d}{dw} e^{\frac{w^{r}}{r} - t w} dw\,,
\end{split}
\end{equation*}
and the right-hand side vanishes by definition of the contours $\tilde{C}_k$, since the integrand is exponentially decreasing along the contour. We can compute the asymptotic behaviour of the functions as $t \to \infty$ via the steepest descent method.

\begin{lemma}\label{lem:asmptA}
	The asymptotic behaviours as $t \to \infty$ of the hyper-Airy function and its derivatives are given by
	\begin{equation}
		\tAi_{r,k}^{(a)}(t)
		\sim
		\frac{(-\theta^k)^a}{\sqrt{2\pi(r-1)}}
		e^{
			- \frac{r-1}{r}
			\theta^k
			t^{\frac{r}{r-1}}
			}
		t^{-\frac{r-2a-2}{2(r-1)}}
		\Bigl(
			1 + O\bigl( t^{-\frac{r}{r-1}} \bigr)
		\Bigr)\,.
	\end{equation}
\end{lemma}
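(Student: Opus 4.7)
The plan is to apply the steepest descent method to an integral representation of $\tAi_{r,k}^{(a)}$, following the same template used in the proof of \cref{lem:asmptA} for the hyper-Airy functions themselves (and mirroring the proof of \cref{lem:LAexp}). First, I would differentiate under the integral sign: since the $t$-dependence of the integrand $e^{w^r/r - tw}$ is entirely in the factor $e^{-tw}$, each derivative produces a factor of $-w$, giving
\[
	\tAi_{r,k}^{(a)}(t) = \frac{\theta^{k(r-2)/2}}{2\pi\iu} \int_{\tilde{C}_k} (-w)^a \, e^{\frac{w^r}{r} - t w} \, dw \, .
\]
To expose a large parameter, I would rescale $w = t^{1/(r-1)} z$, the natural scale dictated by the critical points $w = t^{1/(r-1)} \theta^k$. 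This yields
\[
	\tAi_{r,k}^{(a)}(t) = \frac{(-1)^a \, \theta^{k(r-2)/2}}{2\pi\iu} \, t^{(a+1)/(r-1)} \int_{\tilde{\Gamma}_k} z^a \, e^{s \, h(z)} \, dz \, ,
\]
where $s \coloneqq t^{r/(r-1)}$ is the large parameter, $h(z) \coloneqq z^r/r - z$, and $\tilde{\Gamma}_k$ is the rescaled Lefschetz thimble, now passing through the $t$-independent critical point $z_k = \theta^k$.

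Next, I would apply the standard steepest descent asymptotic formula
\[
	\int_{\tilde{\Gamma}_k} g(z) \, e^{s h(z)} \, dz \sim g(z_k) \sqrt{\tfrac{2\pi}{s \, |h''(z_k)|}} \, e^{s h(z_k)} \, e^{\iu \vartheta_k} \bigl( 1 + O(s^{-1}) \bigr) \, ,
\]
with $\vartheta_k = \bigl( \pi - \arg h''(z_k) \bigr) / 2$, to $g(z) = z^a$. A direct computation, using $\theta^{k(r-1)} = 1$, gives $h(z_k) = -\tfrac{r-1}{r} \theta^k$ and $h''(z_k) = (r-1) \, \theta^{k(r-2)}$, hence $|h''(z_k)| = r-1$ and $\arg h''(z_k) = 2\pi k(r-2)/(r-1)$, so that $e^{\iu \vartheta_k} = \iu \, \theta^{-k(r-2)/2}$ and $g(z_k) = \theta^{ka}$.

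Assembling these pieces, the prefactor $\theta^{k(r-2)/2}$ from the definition of $\tAi_{r,k}$ cancels exactly with $\theta^{-k(r-2)/2}$, the $\iu$ in $e^{\iu\vartheta_k}$ cancels the $\iu$ in the denominator of the normalisation, and collecting the powers of $t$ via $s^{-1/2} = t^{-r/(2(r-1))}$ produces the exponent $(a+1)/(r-1) - r/(2(r-1)) = -(r-2a-2)/(2(r-1))$ claimed in the statement, as well as the desired exponential $e^{-\frac{r-1}{r}\theta^k t^{r/(r-1)}}$. The remainder $O(t^{-r/(r-1)}) = O(s^{-1})$ comes from the next term in the steepest descent expansion. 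The main (mild) obstacle is the careful bookkeeping of the complex phase $\vartheta_k$ and the orientation of $\tilde{\Gamma}_k$ after rescaling; since by hypothesis $t$ avoids Stokes rays, each thimble $\tilde{\Gamma}_k$ passes through a single saddle with a single steepest-descent direction, making $\vartheta_k$ unambiguous and the leading contribution a single exponential as stated.
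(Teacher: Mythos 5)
Your proposal is correct and follows essentially the same route as the paper: differentiate under the integral to get the factor $(-w)^a$, rescale $w = t^{1/(r-1)}z$ so that $s = t^{r/(r-1)}$ becomes the large parameter, and apply the steepest descent formula at $z_k = \theta^k$ with $h(z) = z^r/r - z$, obtaining the same values $h(z_k) = -\tfrac{r-1}{r}\theta^k$ and $h''(z_k) = (r-1)\theta^{k(r-2)}$. The only cosmetic difference is that you factor out $(-1)^a$ and take $g(z) = z^a$ rather than $g(z) = (-z)^a$, and you spell out the phase cancellation $e^{\iu\vartheta_k} = \iu\,\theta^{-k(r-2)/2}$ slightly more explicitly; the bookkeeping is correct.
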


\begin{proof}
	With the change of variables $w = t^{1/(r-1)} z$, we find
	\begin{equation*}
		\tAi_{r,k}^{(a)}(t)
		=
		\frac{\theta^{k\frac{r-2}{2}}}{2\pi\iu} \int_{\tilde{C}_k} (-w)^a e^{\frac{w^{r}}{r} - t w} dw
		=
		\frac{\theta^{k\frac{r-2}{2}}}{2\pi\iu} t^{\frac{a+1}{r-1}}
		\int_{\tilde{\Gamma}_k} (-z)^a e^{t^{\frac{r}{r-1}} ( \frac{z^{r}}{r} - z )} dz\,.
	\end{equation*}
	Here $\tilde{\Gamma}_k$ is the Lefschetz thimble in the $z$-plane that passes through the critical point $z = \theta^k$. We can now apply the steepest descent method to the function
	\begin{equation*}
		I(s) = \int_{\Gamma} g(z) e^{s h(z)} dz\,,
	\end{equation*}
	with $s = t^{\frac{r}{r-1}}$, $g(z) = (-z)^a$, and $h(z) = \frac{z^{r}}{r} - z$. As a consequence, we find
	\begin{equation*}
		I(s)
		\sim
		g(z_k)
		\sqrt{\frac{2\pi}{s |h''(z_k)|}}
		e^{s h(z_k)} e^{\iu \vartheta(z_k)}
		\left(
			1
			+
			O\bigl( s^{-1} \bigr)
		\right),
	\end{equation*}
	where the phase $\vartheta$ is given by $\vartheta(z_k) = \frac{\pi - \arg(h''(z_k))}{2}$. A simple computation shows that $g(z_k) = (-\theta^k)^a$, $h(z_k) = - \frac{r-1}{r} \theta^k$, and $h''(z_k) = (r-1) \theta^{k(r-2)}$. Thus,
	\begin{equation*}
		I(s)
		\sim
		(-\theta^k)^a
		\sqrt{\frac{2\pi}{s (r-1)}}
		e^{ - \frac{r-1}{r} \theta^k s}
		e^{\iu (\frac{\pi}{2} - \frac{2\pi k(r-2)}{2(r-1)})}
		\left(
			1
			+
			O\bigl( s^{-1} \bigr)
		\right).
	\end{equation*}
	Taking into account the prefactor and the change of variables $s = t^{\frac{r}{r-1}}$, we find the claimed asymptotic behaviour.
\end{proof}

We can get the full asymptotic expansion by means of the differential equation, and the coefficients of the asymptotic expansion are expressed in terms of the polynomials $P_m(r,a)$ defined by \cref{eqn:PPZ:polys}.

\begin{proposition}\label{prop:hyper:airy}
	The asymptotic expansions as $t \to \infty$ of the hyper-Airy function and its derivatives are given by
	\begin{equation}
		\tAi_{r,k}^{(a)}(t)
		\sim
		\frac{(-\theta^k)^a}{\sqrt{2\pi(r-1)}}
		e^{
			- \frac{r-1}{r}
			\theta^k
			t^{\frac{r}{r-1}}
			}
		t^{-\frac{r-2a-2}{2(r-1)}}
		\sum_{m \ge 0}
			P_m(r,a) \left( \theta^{-k} \frac{t^{-\frac{r}{r-1}}}{r-1}  \right)^m.
	\end{equation}
\end{proposition}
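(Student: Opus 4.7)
The plan is to follow the same strategy used in the proof of Proposition \ref{prop:LAasymp}: upgrade the leading-order asymptotics of Lemma \ref{lem:asmptA} to a full asymptotic expansion by making the ansatz
\[
\tAi_{r,k}^{(a)}(t) \sim \frac{(-\theta^k)^a}{\sqrt{2\pi(r-1)}}\, e^{-\frac{r-1}{r}\theta^k t^{r/(r-1)}}\, t^{-\frac{r-2a-2}{2(r-1)}}\, S_a(t),
\]
with $S_a(t) = \sum_{m\geq 0} c_m(r,a,k)\, t^{-rm/(r-1)}$ and $c_0(r,a,k) = 1$, and then pinning down the unknown coefficients $c_m(r,a,k)$ by imposing the differential structure of $\tAi_{r,k}$.

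The core of the argument uses two compatibility conditions. The first is the tautology $\tAi_{r,k}^{(a+1)} = \frac{d}{dt}\tAi_{r,k}^{(a)}$, which relates $S_{a+1}$ to $S_a$ and $S_a'$. Differentiating the ansatz, keeping track of the derivative $-\theta^k t^{1/(r-1)}$ produced by the exponential and of the shift by $\frac{1}{r-1}$ in the exponent of $t$ when $a$ increases by one, and then matching the coefficient of $t^{-rm/(r-1)}$, yields a recursion of the shape
\[
c_m(r,a+1,k) - c_m(r,a,k) = \theta^{-k}\,\frac{r(2m-1)-2(a+1)}{2(r-1)}\, c_{m-1}(r,a,k).
\]
Substituting the further ansatz $c_m(r,a,k) = P_m(r,a)\bigl(\theta^{-k}/(r-1)\bigr)^m$, the powers of $\theta^{-k}$ and of $r-1$ telescope and the identity collapses to the difference formula in \eqref{eqn:PPZ:polys}, with $a$ shifted by one.

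The second condition is the ODE \eqref{eqn:hyperAiryeq} itself, $\tAi_{r,k}^{(r-1)} = (-1)^{r-1} t\, \tAi_{r,k}$. Comparing the ansatze for $a=r-1$ and $a=0$ and using $\theta^{r-1}=1$, one checks that the prefactors match and the ODE reduces to $S_{r-1}(t) = S_0(t)$, i.e.\ $P_m(r,r-1) = P_m(r,0)$ for all $m$, which is exactly the closing condition completing the definition of $P_m(r,a)$ in \eqref{eqn:PPZ:polys}. Together with the initial value $P_0 = 1$, provided by Lemma \ref{lem:asmptA}, this uniquely identifies the coefficients and establishes the claim. No genuine obstacle is anticipated: the work is essentially a careful bookkeeping of prefactors and of exponents under differentiation, with the one conceptual subtlety being the recognition that the closure $P_m(r,0)=P_m(r,r-1)$ is precisely what the full $(r-1)$-th order ODE enforces beyond the shift recursion obtained from a single derivative.
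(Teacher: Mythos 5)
Your proposal is correct and follows essentially the same route as the paper's proof: it upgrades Lemma~\ref{lem:asmptA} to a full expansion via an ansatz, derives the difference recursion from $\tAi_{r,k}^{(a+1)}=\frac{d}{dt}\tAi_{r,k}^{(a)}$, uses the ODE \eqref{eqn:hyperAiryeq} only to close the recursion with $P_m(r,r-1)=P_m(r,0)$, and removes the $k$-dependence by the substitution $c_m(r,a,k)=P_m(r,a)\bigl(\theta^{-k}/(r-1)\bigr)^m$. The recursion coefficient $\theta^{-k}\frac{r(2m-1)-2(a+1)}{2(r-1)}$ you obtain matches \eqref{eqn:PPZ:polys} after the substitution, so no gap remains.
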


\begin{proof}
	Let us set $u_{a} = \sqrt{2\pi(r-1)} \, \tAi_{r,k}^{(a)}$. From \cref{lem:asmptA}, we know that the asymptotic expansion of $u_{a}$ is of the form
	\begin{equation*}
		u_{a}(t)
		\sim
		(-\theta^k)^a
		e^{
			- \frac{r-1}{r}
			\theta^k
			t^{\frac{r}{r-1}}
			}
		t^{-\frac{r-2a-2}{2(r-1)}}
		\sum_{m \ge 0}
			p_m(r,a,k) t^{-m\frac{r}{r-1}}\,,
	\end{equation*}
	for some coefficients $p_m(r,a,k)$ with $p_0 = 1$. Thus, the asymptotic expansion of the derivative of $u_{a-1}$ for $a = 1,\dots,r-2$ reads
	\begin{equation*}
	\begin{split}
		\dot{u}_{a-1}(t)
		\sim
		(-\theta^k)^{a-1}
		e^{
			- \frac{r-1}{r}
			\theta^k
			t^{\frac{r}{r-1}}
			}
		t^{-\frac{r-2a-2}{2(r-1)}}
		\sum_{m \ge 0}
			\biggl(
				- \theta^k
				p_m(r,a-1,k)
				+ & \\
				+
				\frac{r}{r-1} \left( - m + \frac{1}{2} + \frac{a}{r} \right)
				p_{m-1}(r,a-1,k)
			\biggr)
			t^{-m\frac{r}{r-1}} & ,
	\end{split}
	\end{equation*}
	which equals
	\begin{equation*}
		u_{a}(t)
		\sim
		(-\theta^k)^{a} \,
		e^{
			- \frac{r-1}{r}
			\theta^k
			t^{\frac{r}{r-1}}
		} \,
		t^{-\frac{r-2a-2}{2(r-1)}}
		\sum_{m \ge 0}
			p_m(r,a,k) t^{-m\frac{r}{r-1}} \, .
	\end{equation*}
	In particular, we get the equality
	\begin{equation*}
		p_m(r,a-1,k) - \theta^{-k} \frac{r}{r-1} \left( - m + \frac{1}{2} + \frac{a}{r} \right) p_{m-1}(r,a-1,k)
		=
		p_m(r,a,k)\,,
	\end{equation*}
	for $a = 1,\dots,r-2$. On the other hand, the ODE is equivalent to $u_{r-1} = (-1)^{r-1} t \, u_0$, so that we easily find $p_m(r,r-1,k) = p_m(r,0,k)$. By setting $p_m(r,a,k) = (\theta^k(r-1))^{-m} P_m(r,a,k)$, we obtain the claimed recursion \eqref{eqn:PPZ:polys} for the coefficients $P_m(r,a,k)$. Notice that $P_m(r,a,k)$ is independent of $k$, so we can drop it from the notation.
\end{proof}

For later convenience, let us introduce the following notation for the asymptotic expansion of the hyper-Airy functions and their derivatives.

\begin{definition}
	Define the formal power series
	\begin{equation}
		\mathsf{A}_{r}^{(a)}(u)
		=
		\sum_{m \ge 0}
			P_m(r,a) \left( \frac{u}{r(r-1)}\right)^m,
	\end{equation}
	so that the asymptotic expansions as $t \to \infty$ of the hyper-Airy function and its derivatives can be written as
	\begin{equation}
		\tAi_{r,k}^{(a)}(t)
		\sim
		\frac{(-\theta^k)^a}{\sqrt{2\pi(r-1)}} \,
		e^{ - \frac{r-1}{r} \theta^k t^{\frac{r}{r-1}} } \,
		t^{-\frac{r-2a-2}{2(r-1)}} \,
		\mathsf{A}_{r}^{(a)}\bigl( \theta^{-k} \, r t^{- \frac{r}{r-1}} \bigr) \, .
	\end{equation}
\end{definition}

\section{The spectral curves}
\label{sec:TR}

As explained in \cref{subsec:TR}, to every compact spectral curve $\mc{S}$ one can associate a semisimple CohFT $\Omega_{g,n} = RTw_{g,n}$, constructed through the action of an $R$-matrix and a translation on a TFT, such that the expansion coefficients of the topological recursion correlators $\omega_{g,n}$ on a certain basis of differential forms coincide with the CohFT correlators:
\begin{equation}
	\int_{\overline{\mc{M}}_{g,n}} \Omega_{g,n}(e_{i_1} \otimes \cdots \otimes e_{i_n})
		\prod_{i=1}^n \psi_i^{k_i}\,.
\end{equation}
The main goal of this section is to apply the TR-CohFT correspondence to two global spectral curves, proving that the associated correlators are the Witten classes shifted along $v_{r-2}$ and $v_1$, and establish a connection with the generalised Airy functions.

\subsection{The spectral curve for the \texorpdfstring{$v_{r-2}$ shift}{shift along (0,...,0,r)}}

The goal of this section is to prove the following result.

\begin{theorem}\label{thm:SC:Wspin:hat}
	Let $\widehat{\mc{S}}_{r,\epsilon}$ be the $1$-parameter family of spectral curves on $\P^1$ given by
	\begin{equation}\label{eqn:SC:Wspin:hat}
			x(z) = V_r(z,\epsilon) \, ,
		 	\qquad\quad
		 	y(z) = z \, ,
		 	\qquad\quad
		 	\omega_{0,2}(z_1,z_2) = \frac{dz_1 dz_2}{(z_1 - z_2)^2} \, ,
	\end{equation}
	where $V_r$ is the $r$-th Lucas polynomial of the second kind. The CohFT associated to $\widehat{\mc{S}}_{r,\epsilon}$ is the shifted Witten $r$-spin class $\widehat{W}^{r,\epsilon}$. More precisely, the topological recursion correlators $\widehat{\omega}_{g,n}^{r,\epsilon}$ corresponding to the spectral curve $\widehat{\mc{S}}_{r,\epsilon}$ are
	\begin{equation}
	\begin{split}
		&
		\widehat{\omega}_{g,n}^{r,\epsilon}(z_1,\dots,z_n) = \\
		& \qquad
		=
		(-r)^{g-1}
		\sum_{a_1,\dots,a_n = 0}^{r-2}
		\int_{\overline{\mc{M}}_{g,n}}
				\widehat{W}^{r,\epsilon}_{g,n}(v_{a_1} \otimes\cdots\otimes v_{a_n})
			\prod_{i=1}^n \sum_{k_i \ge 0} \psi_i^{k_i} d\hat{\xi}^{k_i,a_i}(z_i) \, ,
	\end{split}
	\end{equation}
	and the differentials $d\hat{\xi}^{k,a}(z)$ are given by
	\begin{equation}
		\hat{\xi}^{a}(z)
		=
		- \frac{U_{r-1-a}(z,\epsilon)}{U_r(z,\epsilon)} \, ,
		\qquad\quad
		d\hat{\xi}^{k,a}(z)
		=
		d\left( \left(
				\frac{1}{r U_r(z,\epsilon)} \frac{d}{dz}
			\right)^k
			\hat{\xi}^{a}(z)
		\right) ,
	\end{equation}
	where $U_p$ is the $p$-th Lucas polynomial of the first kind.
\end{theorem}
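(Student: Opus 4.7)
The strategy is to apply the Eynard--DOSS theorem (\cref{thm:Eyn:DOSS}) to the spectral curve $\widehat{\mc{S}}_{r,\epsilon}$, matching the TFT, the $R$-matrix, and the translation with the ingredients of the Teleman reconstruction of $\widehat{W}^{r,\epsilon}$ recalled in \cref{sec:Witten:TR}. The key observation, which explains why the Airy--Lucas functions of \cref{sec:gen:Airy} enter the picture, is that $x(z) - x(z')$ is governed by Lucas polynomial identities: in particular $V_r'(z,\epsilon) = r\, U_r(z,\epsilon)$ and $V_r(z,\epsilon) - V_r(w,\epsilon)$ admits a clean factorisation in terms of $U_j$'s. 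I will fix the normalisation constant $C = (-r)^{-1/2}$ (up to a choice of square root).

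\emph{Step 1: spectral curve data.} Using $V_r'(z,\epsilon) = r\,U_r(z,\epsilon)$, the ramification points of $x = V_r(\cdot,\epsilon)$ are precisely the zeros $z_j = 2\sqrt{\epsilon}\,\cos(j\pi/r)$ of $U_r(\cdot,\epsilon)$ for $j=1,\dots,r-1$, all simple. Expanding $V_r$ to second order near $z_j$ and using the relation $V_r'' = r U_r' = r\cdot (\text{derivative of }U_r)$ together with the Chebyshev evaluation $U_{r-1}(z_j,\epsilon)$, I compute
\[
\zeta_j^2 = -2\bigl(x - x(z_j)\bigr)\quad\Longrightarrow\quad \Delta^j = \left.\frac{dy}{d\zeta_j}\right|_{0} = \pm\frac{1}{\sqrt{-r\,U_r'(z_j,\epsilon)}}\,,
\]
which, again via Chebyshev identities, is expressible through $\sin(j\pi/r)$ and $\epsilon$. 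With $C=(-r)^{-1/2}$, the resulting TFT on the canonical basis $\{e_j\}$ reads $w_{g,n}(e_j^{\otimes n}) = (h^j)^{2-2g-n}$ and matches $\widehat{w}^{r}_{g,n}$ after the usual change of basis to the flat frame $\{v_a\}$ with transition matrix $\Psi^j_a \propto \sin((a+1)j\pi/r)$, precisely recovering the trigonometric expression for $\widehat{w}^{r}_{g,n}$ of \cite{PPZ19}.

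\emph{Step 2: $R$-matrix via Airy--Lucas.} This is the technical core. The functions $\xi^j$ are rational in $z$ with poles only at the ramification points; using partial fraction decomposition in the Lucas basis, I expect to prove that the flat-basis differentials take the form $d\hat\xi^{k,a}(z)$ stated in the theorem, by verifying that $\hat\xi^{a}(z) = -U_{r-1-a}(z,\epsilon)/U_r(z,\epsilon)$ has the correct principal parts at the $z_j$'s. For the $R$-matrix, substituting $x(z) = V_r(z,\epsilon)$ in the integral representation~\eqref{eqn:R:matrix:TR} converts it, after the change of variables $z \mapsto w$ along the Lefschetz thimble and using the fact that $dy = dz$ while $d\xi^{j}$ pairs against $U_a$ through the flat-to-canonical basis change, into exactly the integral~\eqref{eqn:LAiry} defining the Airy--Lucas functions $\widehat{\mathrm{Ai}}_{r,a}^{j}$. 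Taking $u \to 0$ asymptotics and invoking \cref{prop:LAasymp}, the leading exponential and prefactors cancel the TFT normalisation, and the remaining formal series yields exactly $\ms{B}_{r,a}(\epsilon^{-r/2}u)$ on the diagonals $e_j \mapsto v_a$ and anti-diagonals $e_j \mapsto v_{r-2-a}$, recovering the $R$-matrix of \cite[section~2]{PPZ19}.

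\emph{Step 3: translation and conclusion.} Since the shifted class $\widehat{W}^{r,\epsilon}$ has a flat unit, by \cref{prop:TL:TR} it suffices to check that the translation $T(u)$ produced by~\eqref{eqn:translation:TR} equals $T_{\mathrm{L}}(u) = u(R^{-1}(u)\mathbb{1} - \mathbb{1})$. Since $dy = dz$, the integral defining $T^j(u)$ is essentially the same as in Step~2 specialised to the unit, and the unit-preservation identity follows from the symplectic property of $R$ together with the fact that $y$ is the coordinate in which the spectral curve is written; this is a formal consequence which I will phrase as the statement that the spectral curve reconstructs a CohFT with flat unit. Combining Steps~1--3, the CohFT reconstructed by Eynard--DOSS from $\widehat{\mc{S}}_{r,\epsilon}$ coincides with $\widehat{W}^{r,\epsilon}$, and substituting into \cref{thm:Eyn:DOSS} with the overall power of $C = (-r)^{-1/2}$ giving the prefactor $(-r)^{g-1}$ yields the claimed identity. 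The main obstacle is the bookkeeping in Step~2: keeping track of square roots, phases of the Lefschetz thimbles and the precise matching of the change-of-basis matrix $\Psi$ with the trigonometric functions in the PPZ formulas, which requires careful use of the Chebyshev identities recalled in \cref{app:Lucas}.
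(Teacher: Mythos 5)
Your proposal follows essentially the same route as the paper: compute the ramification data of $x=V_r(\cdot,\epsilon)$, identify the DOSS integrals \eqref{eqn:R:matrix:TR}--\eqref{eqn:translation:TR} with the Airy--Lucas integrals via the change of variables $z=(u/r)^{1/r}w$ and the partial-fraction expansion of $U_r(w,t)/(w-2\sqrt{t}\cos(j\pi/r))$ (\cref{lem:Uroot}), read off the $R$-matrix from the asymptotics in \cref{prop:LAasymp}, pass to the flat basis with $\Psi^j_a\propto\sin((a+1)j\pi/r)$, and assemble everything in \cref{thm:Eyn:DOSS}. Two bookkeeping points are off as stated. First, the normalisation constant must be $C=-\iu\sqrt{r}\,\epsilon^{(r-2)/4}$, i.e.\ a square root of $-r$ (not its reciprocal $(-r)^{-1/2}$) times an $\epsilon$-power: the reciprocal would produce $(-r)^{1-g}$ instead of $(-r)^{g-1}$, and the $\epsilon^{(r-2)/4}$ factor is needed so that, together with the $\epsilon^{((r-2)(g-1)+|a|)/2}$ prefactor in the PPZ topological field theory and the rescaling of the flat-basis $\xi$'s, all $\epsilon$-powers cancel and the differentials come out exactly as $d\hat{\xi}^{k,a}$. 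Second, the flat-unit identity $T(u)=u(\mathbb{1}-R^{-1}(u)\mathbb{1})$ is not a formal consequence of the symplectic condition: the paper establishes it by explicitly evaluating the translation integral using the identity $\int_{\hat{C}_j}(1+wU_r)e^{\frac{1}{r}V_r}\,dw=0$ (\cref{lem:Lid}), which expresses $T^k(u)$ through $\hAi_{r,0}^k$, and then invoking the orthogonality $\sum_{j}\sin(\frac{(a+1)j\pi}{r})\sin(\frac{(b+1)j\pi}{r})=\frac{r}{2}\delta_{a,b}$; this computation should be carried out rather than asserted.
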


To prove the theorem, let us compute the ingredients of the TR-CohFT correspondence. We refer to \cref{app:Lucas} for properties of the Lucas sequences that will be used in this section. We use the identification 
\[
	\lambda := \sqrt{\epsilon},
\] in order to avoid annoying square routes in the following. The ramification points of the spectral curve $\widehat{\mc{S}}_{r,
\epsilon}$ are given by
\begin{equation}
	x'(z) = r U_{r}(z,\lambda^2) = r \lambda^{r-1} \mc{U}_{r-1}\left(\frac{z}{2\lambda}\right) = 0\,,
\end{equation}
where $\mc{U}_s$ is the $s$-th Chebyshev polynomial of the second kind. The solutions are $\alpha_k = 2\lambda \cos(\frac{k\pi}{r})$ for $k = 1,\dots,r-1$, and we have $x(\alpha_k) = x_k = (-1)^k 2 \lambda^r$. We choose local coordinates such that
\begin{equation}
	x(z) - x_k = - \frac{\zeta_k^2(z)}{2}\,,
\end{equation}
and a determination of the square root such that
\begin{equation}
	\zeta_{k}(z)
	=
	\frac{1}{\sqrt{(-1)^{k}}} \frac{r \, \lambda^{\frac{r-2}{2}}}{\sqrt{2} \sin(\frac{k\pi}{r})}
	(z - \alpha_k) + O\bigl( (z - \alpha_k)^2 \bigr)\,.
\end{equation}
As a consequence, $\Delta^{k} = \sqrt{(-1)^{k}} \, \frac{\sqrt{2} \sin(\frac{k\pi}{r})}{r \lambda^{(r-2)/2}}$. Moreover, we choose the global constant $C = - \iu \sqrt{r} \lambda^{\frac{r-2}{2}}$, so that $h^k = \sqrt{\frac{2}{r}} \frac{ \sin(\frac{k\pi}{r})}{\sqrt{(-1)^{k+1}}}$. In this way, the TFT is given by
\begin{equation}
\begin{aligned}
	&
	V = \C\braket{e_1,\dots,e_{r-1}}\,,
	\qquad
	\eta(e_{k},e_{l}) = \delta_{k,l}\,,
	\qquad
	\bm{1} = \sqrt{\frac{2}{r}} \, \sum_{k=1}^{r-1} \frac{\sin(\tfrac{k\pi}{r})}{\sqrt{(-1)^{k+1}}} e_k\,, \\
	&
	\widehat{w}_{g,n}(e_{k_1}\otimes \cdots \otimes e_{k_n})
	=
	\delta_{k_1,\dots,k_n}
	\left( \sqrt{\frac{r}{2}} \frac{\sqrt{(-1)^{k+1}}}{\sin(\tfrac{k\pi}{r})} \right)^{2g-2+n}.
\end{aligned}
\end{equation}
Let us compute the other ingredients of the TR-CohFT correspondence.

\begin{lemma}
	In the canonical basis $(e_1,\dots,e_{r-1})$, we have the following:
	\begin{itemize}
		\item
		The auxiliary functions are given by
		\begin{equation}
			\xi^{k}(z)
			=
			- {\textstyle \sqrt{(-1)^{k}}} \, 
			\frac{\sqrt{2} \sin(\frac{k\pi}{r})}{r \lambda^{\frac{r-2}{2}}}
			\frac{1}{z - 2\lambda \cos(\frac{k\pi}{r})}\,.
		\end{equation}

		\item
		The $R$-matrix elements in the canonical basis (denoted $R^{-1}_{\textup{C}}$) are given by
		\begin{equation}
			R^{-1}_{\textup{C}}(u)_i^j
			=
			\frac{2}{r}
			\frac{\sqrt{(-1)^j}}{\sqrt{(-1)^{i}}}
			\sum_{p=0}^{r-2}
				\sin \left( \frac{(p+1)i \pi}{r} \right)
				\sin \left( \frac{(p+1)j \pi}{r} \right)
				\mathsf{B}_{r,p} \left( (-1)^{j+1} \frac{u}{\lambda^r} \right)  .
		\end{equation}

		\item
		The translation is given by $T(u) = u (\bm{1} - R^{-1}(u)\bm{1})$. Hence, the CohFT $\Omega_{g,n} = RT\widehat{w}_{g,n}$ obeys the flat unit axiom.
	\end{itemize}
\end{lemma}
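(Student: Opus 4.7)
My plan is to establish the three statements in sequence; the main effort is in Part (2), where the $R$-matrix integral must be matched with the Airy--Lucas asymptotic expansion.

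\emph{Part (1): the auxiliary functions.} From its definition $\xi^i(z) = \int^z (\omega_{0,2}(\zeta_i,\cdot)/d\zeta_i)\big|_{\zeta_i = 0}$ together with $\omega_{0,2}(z_1,z_2) = dz_1\,dz_2/(z_1-z_2)^2$, the function $\xi^i$ is meromorphic on $\P^1$, vanishes at $\infty$, and has a simple pole at $\alpha_i$ with principal part $-1/\zeta_i(z)$ (and no other poles). A Liouville-type uniqueness argument identifies $\xi^i$ with the unique rational function on $\P^1$ carrying this principal part. Matching the stated closed form with the given leading expansion of $\zeta_i(z)$ around $\alpha_i$ finishes the verification.

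\emph{Part (2): the $R$-matrix.} The idea is to integrate by parts and reduce to Airy--Lucas integrals. Two observations are key: (a) $x'(z) = r U_r(z,\epsilon) = r\prod_{k=1}^{r-1}(z - \alpha_k)$, so $\xi^i \cdot x'$ is a polynomial in $z$ of degree $r-2$ proportional to $\prod_{k \neq i}(z - \alpha_k)$; (b) after the substitution $z = 2\lambda y$, which puts $V_r$ into Chebyshev form $V_r(z,\epsilon) = 2\lambda^r \mc{T}_r(y)$, the polynomial $\prod_{m \neq i}(y - \cos(m\pi/r))$ can be expanded in the Chebyshev basis $\{\mc{U}_p(y)\}_{p=0}^{r-2}$ using discrete orthogonality $\sum_k \sin((p+1)k\pi/r)\sin((q+1)k\pi/r) = (r/2)\delta_{p,q}$ together with $\mc{U}_p(\cos(k\pi/r)) = \sin((p+1)k\pi/r)/\sin(k\pi/r)$. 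Each resulting integral $\int_{\Gamma_j}\mc{U}_p(y)\,e^{2\lambda^r(\mc{T}_r(y) - (-1)^j)/u}dy$ is then identified -- by choosing $t$ with $t^{r/2} = r\lambda^r/u$ and using the Chebyshev rewriting of the Airy--Lucas integral appearing in the proof of \cref{lem:LAexp} -- with $\hAi_{r,p}^j(t)/t^{(p+1)/2} \cdot e^{-(-1)^j 2 t^{r/2}/r}$, up to an explicit prefactor. Substituting the asymptotic expansion from \cref{prop:LAasymp} converts this into a power series in $u$ controlled by $\ms{B}_{r,p}((-1)^{j+1}u/\lambda^r)$, and reassembling the scalar factors yields the claimed formula. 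The bookkeeping of prefactors and of the chosen determinations of $\sqrt{(-1)^k}$ is where I expect most of the work.

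\emph{Part (3): the translation.} The crux is the rational-function identity
\begin{equation*}
\sum_{j=1}^{r-1} h^j\,\xi^j(z) = \frac{C}{x'(z)}.
\end{equation*}
Both sides are rational functions on $\P^1$ with simple poles at $\alpha_1,\dots,\alpha_{r-1}$ and vanishing at $\infty$, so the identity reduces to a comparison of residues. The residues of the right-hand side come from the partial-fraction expansion of $1/U_r$ using $U_r'(\alpha_k,\epsilon) = (-1)^{k+1}r\lambda^{r-2}/(2\sin^2(k\pi/r))$; those of the left-hand side are immediate from the closed forms of Part (1). Once this identity is in hand, a single integration by parts transforms $u\sum_j R^{-1}(u)^i_j\,h^j = -u\sqrt{u/(2\pi)}\int_{\gamma_i} d(C/x')\,e^{(x-x_i)/u}$ into $C\sqrt{u/(2\pi)}\int_{\gamma_i}dy\,e^{(x-x_i)/u}$ (the boundary terms vanish along the Lefschetz thimble). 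Comparing with the defining formula for $T^i(u)$ gives $T^i(u) = u(h^i - (R^{-1}(u)\mathbb{1})^i)$, i.e.\ $T(u) = u(\mathbb{1} - R^{-1}(u)\mathbb{1}) = T_{\textup{R}}(u)$, so that \cref{prop:TL:TR} yields the flat unit for $\widehat{W}^{r,\epsilon}_{g,n} = R T\, \widehat{w}^{r}_{g,n}$.
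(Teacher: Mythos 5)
Your proposal is correct, and Parts (1) and (2) follow essentially the paper's own route: Part (1) is the same trivial computation (the paper just integrates $\Delta^k\,dz/(\alpha_k-z)^2$ directly rather than invoking uniqueness of a rational function with prescribed principal parts), and Part (2) is the same chain of integration by parts, reduction to a Chebyshev/Lucas expansion, identification with the Airy--Lucas integrals, and substitution of the expansion from \cref{prop:LAasymp}. The one genuine divergence in Part (2) is how the key expansion $U_r(w,t)/(w-\alpha_i) = \sum_p c_p\,U_{p+1}(w,t)$ is obtained: the paper's \cref{lem:Uroot} derives the coefficients from the continuous orthogonality of Chebyshev polynomials (weighted integrals over $[-1,1]$ and the product formula $\mc{U}_q\mc{U}_p=\sum_j\mc{U}_{q-p+2j}$), whereas your discrete-orthogonality evaluation at the nodes $\cos(k\pi/r)$ is more elementary and arrives at the same $c_p$ with less machinery. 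The real difference is Part (3). The paper computes $T^k(u)$ explicitly: it integrates by parts, changes variables, invokes \cref{lem:Lid} to recognise $\hAi_{r,0}^k$, substitutes the asymptotics to get a closed formula involving $\ms{B}_{r,0}$, and only then verifies $T(u)=u(\mathbb{1}-R^{-1}(u)\mathbb{1})$ by matching against $\sum_i R^{-1}(u)^j_i h^i$ via the trigonometric identity \eqref{eqn:trig:id}. You instead prove the structural identity $\sum_i h^i\xi^i = C/x'$ (which holds since $(\Delta^i)^2=-1/x''(\alpha_i)$ matches the partial-fraction residues of $1/x'$, i.e.\ your stated value of $U_r'(\alpha_k,\epsilon)$) and deduce $T=T_{\textup{R}}$ by a single integration by parts using $dx/x'=dz=dy$, with no explicit evaluation of either side. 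Your argument is shorter, sidesteps \cref{lem:Lid} and the trigonometric identity, and in fact shows that the flat-unit relation is automatic for any spectral curve of this genus-zero form with $y=z$; what it gives up is the explicit closed formula for $T^k(u)$ in terms of $\ms{B}_{r,0}$ that the paper's computation produces along the way.
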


\begin{proof} 
	We compute the auxiliary functions as
	\begin{equation*}
	\begin{split}
		\xi^{k}(z)
		& =
		\int^z \frac{B(z_0, \cdot)}{d\zeta_{k}(z_0)} \bigg|_{z_0 = \alpha_{k}}
		=
		\Delta^{k} \int^z \frac{dz}{(\alpha_{k} - z)^2}
		=
		-\frac{\Delta^{k}}{z - \alpha_k} \,.
	\end{split}
	\end{equation*}
	For the $R$-matrix (we drop the subscript ``C'' in the proof), we start by integrating by parts:
	\begin{equation*}
		R^{-1}(u)_{i}^{j}
		=
		- \sqrt{\frac{u}{2\pi}} \int_{\gamma_j} d\xi^{i} \, e^{\frac{1}{u}(x - x_j)}
		=
		\sqrt{\frac{u}{2\pi}} \int_{\gamma_j}
			\xi^{i} \, e^{\frac{1}{u}(x - x_j)}
			\frac{dx}{u}\,.
	\end{equation*}
	We perform now the change of variables $z = (\frac{u}{r})^{1/r}w$. A simple computation shows that
	\begin{equation*}
	\begin{split}
		\frac{x - x_j}{u}
		& =
		\frac{1}{u} V_r\left( (\tfrac{u}{r})^{1/r} w , \lambda^2 \right) - \frac{x_j}{u}
		=
		\frac{1}{r} V_r\left( w , \lambda^2 (\tfrac{r}{u})^{2/r} \right) - \frac{x_j}{u}\,
		,
		\\
		\frac{dx}{u}
		& =
		\frac{r}{u} U_{r}\left( (\tfrac{u}{r})^{1/r} w , \lambda^2 \right)
		\left( \frac{u}{r} \right)^{1/r} dw
		=
		U_r\left( w , \lambda^2 (\tfrac{r}{u})^{2/r} \right) dw \, .
	\end{split}
	\end{equation*}
	In both expressions, we used the quasi-homogeneity of type $(1,2)$ of the Lucas sequences (see \cref{table:Lucas}).	Moreover, $w$ runs along the Lefschetz thimble $\hat{C}_j$ passing through the critical point $w = 2 \lambda \left( \frac{r}{u} \right)^{1/r} \cos(\frac{j\pi}{r})$. As a consequence, we find
	\begin{equation*}
	\begin{split}
		R^{-1}(u)^{j}_{i}
		& =
		- \Delta^i e^{-\frac{x_j}{u}} \sqrt{\frac{u}{2\pi}}
		\int_{\hat{C}_j}
			\frac{ U_{r}(w,t) }{(\frac{u}{r})^{1/r} w - \alpha_i}
			e^{\frac{1}{r} V_r(w,t)}
			dw \\
		& =
		- \Delta^i e^{-\frac{x_j}{u}} \sqrt{\frac{u}{2\pi}} \left( \frac{r}{u} \right)^{\frac{1}{r}}
		\int_{\hat{C}_j}
			\frac{ U_{r}(w,t) }{w - (\frac{r}{u})^{1/r} \alpha_i}
			e^{\frac{1}{r} V_r(w,t)}
			dw \, ,
	\end{split}
	\end{equation*}
	where in the last equation we simply set $t^{1/2} = \lambda \left( \frac{r}{u} \right)^{1/r}$. As $(\frac{r}{u})^{1/r} \alpha_i = 2\sqrt{t} \cos(\frac{i\pi}{r})$, we can use \cref{lem:Uroot} to get
	\begin{equation*}
	\begin{split}
		& R^{-1}(u)^{j}_{i} = \\
		& =
		- \Delta^i e^{-\frac{x_j}{u}} \sqrt{\frac{u}{2\pi}} \left( \frac{r}{u} \right)^{\frac{1}{r}}
		\sum_{p=0}^{r-2} (-1)^{i+1} t^{\frac{r-p-2}{2}}
			\frac{\sin \bigl( \frac{(p+1)i \pi}{r} \bigr)}{\sin \left( \frac{i \pi}{r} \right)}
			\int_{\hat{C}_j}
				U_{p+1}(w,t)
				e^{\frac{1}{r} V_r(w,t)}
				dw \\
		& =
		2 \sqrt{\frac{\pi}{r}}
		e^{-\frac{(-1)^j 2 \lambda^r}{u}}
		\frac{\sqrt{(-1)^j}}{\sqrt{(-1)^{i}}}
		\sum_{p=0}^{r-2}
			t^{\frac{r-2p-2}{4}}
			\sin \left( \frac{(p+1)i \pi}{r} \right)
			\hAi_{r,p}^{j}(t) \, .
	\end{split}
	\end{equation*}
	Using the asymptotic expansion of the functions $\hAi_{r,p}^{j}$ derived in \cref{prop:LAasymp}, the above expression simplifies to
	\begin{equation*}
		R^{-1}(u)^{j}_{i}
		=
		\frac{2}{r}
		\frac{\sqrt{(-1)^j}}{\sqrt{(-1)^{i}}}
		\sum_{p=0}^{r-2}
			\sin \left( \frac{(p+1)i \pi}{r} \right)
			\sin \left( \frac{(p+1)j \pi}{r} \right)
			\ms{B}_{r,p} \left( (-1)^{j+1} r t^{-r/2} \right) \, .
	\end{equation*}
	We can now compute the translation: integrating by parts, we find
	\begin{equation*}
	\begin{split}
		T^k(u)
 		& =
 		u h^k + \iu \sqrt{\frac{u r}{2\pi}} \lambda^{\frac{r-2}{2}} \int_{\gamma_k} dy \, e^{\frac{1}{u}(x - x_k)} \\
 		& =
 		u h^k - \iu \sqrt{\frac{u r}{2\pi}} \lambda^{\frac{r-2}{2}} \int_{\gamma_k} y \, e^{\frac{1}{u}(x - x_k)} \frac{dx}{u} \, .
	\end{split}
	\end{equation*}
	Again, we perform the change of variables $z = (\frac{u}{r})^{1/r} w$, and the substitution $t^{1/2} = \lambda \left( \frac{r}{u} \right)^{1/r}$ to obtain
	\[
		T^k(u)
		=
		u h^k
		-
		\iu e^{-\frac{x_k}{u}} \frac{u}{\sqrt{2\pi}} t^{\frac{r-2}{4}}
		\int_{\hat{C}_k}
			w \, U_r(w,t)
			e^{\frac{1}{r} V_r(w,t)}
			dw \, .
	\]
	Using the identity of \cref{lem:Lid} and plugging in the asymptotic expansion of the functions $\hAi_{r,0}^k$ calculated in \cref{prop:LAasymp}, this simplifies to 
	\[
	\begin{split}
		T^k(u)
		& =
		u h^k
		+
		\iu e^{-\frac{x_k}{u}} \frac{u}{\sqrt{2\pi}} t^{\frac{r-2}{4}}
		\, 2\pi {\textstyle \sqrt{(-1)^k}} \;
		\hAi_{r,0}^k(t) \\
		& =
		u h^k
		+
		u
		\sqrt{\frac{2}{r}}
		{\textstyle \sqrt{(-1)^{k+1}}}
		\sin{ \left( \frac{k\pi}{r} \right) }
		\,
		\ms{B}_{r,0} \left( (-1)^{k+1} \frac{u}{\lambda^r} \right) .
	\end{split}
	\]
	The relation $T(u) = u (\bm{1} - R^{-1}(u)\bm{1})$ is equivalent to showing that
	\[
		\sum_{i=1}^{r-1} R^{-1}(u)^{j}_{i} \, h^i
		=
		- \sqrt{\frac{2}{r}}
		{\textstyle \sqrt{(-1)^{j+1}}}
		\sin{ \left( \frac{j\pi}{r} \right) }
		\,
		\ms{B}_{r,0} \left( (-1)^{j+1} \frac{u}{\lambda^r} \right),
	\]
	which follows from the trigonometric identity
	\begin{equation}\label{eqn:trig:id}
		\sum_{i=1}^{r-1} \sin \left( \frac{(p+1) i \pi}{r} \right) \sin \left( \frac{(q+1) i \pi}{r} \right)
		=
		\frac{r}{2} \, \delta_{p,q} \, . \qedhere
	\end{equation}
\end{proof}

The flat basis $(v_0,\dots,v_{r-2})$ in terms of the canonical basis  of the TFT, and the inverse base transformation to the canonical basis are given by the formulae
\begin{equation}
\begin{split}
	v_a &= \sqrt{\frac{2}{r}} \, \sum_{j=1}^{r-1} \frac{1}{\sqrt{(-1)^{j+1}}} \sin\left(\frac{(a+1)j\pi}{r}\right) e_{j}\,, \\
	e_j &= \sqrt{\frac{2}{r}} \, \sum_{a=0}^{r-2} {\textstyle \sqrt{(-1)^{j+1}}} \sin\left(\frac{(a+1)j\pi}{r}\right) v_a\,.
\end{split}
\end{equation}
It is easy to see that these transformations are inverse to each other using the trigonometric identity~\eqref{eqn:trig:id}.

\begin{proposition}\label{prop:CohFT:hat:TR}
	In the flat basis $(v_0,\dots,v_{r-2})$, the CohFT associated to the spectral curve $\widehat{\mc{S}}_{r,\epsilon}$ is 
	\begin{itemize}
		\item
		The pairing and the TFT are expressed as
		\begin{align}
			& \eta(v_a,v_b)
			=
			\delta_{a+b,r-2}\,, \\
			& \widehat{w}_{g,n}(v_{a_1} \otimes \cdots \otimes v_{a_n})
			=
			\left( \frac{r}{2} \right)^{g-1}\sum_{k=1}^{r-1} \frac{(-1)^{(k-1)(g-1)}\prod_{i=1}^n \sin \left( \frac{(a_i+1)k\pi}{r}\right)}{\sin^{2g-2+n}\left(\frac{k\pi}{r}\right)}.
		\end{align}
		 Moreover, the unit is given by $\bm{1} = v_0$.

		\item
		The $R$-matrix elements in the flat basis (denoted $R^{-1}_{\textup{F}}$) are given by
		\begin{equation}
			R_{\textup{F}}^{-1}(u)^{b}_{a}
			=
			\ms{B}^{\textup{even}}_{r,a} \left( \frac{u}{\lambda^r} \right) \delta_{b,a}
		  +
		  \ms{B}^{\textup{odd}}_{r,a} \left( \frac{u}{\lambda^r} \right) \delta_{b,r-2-a}
		  \, .
		\end{equation}
	\end{itemize}
\end{proposition}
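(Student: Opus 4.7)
The plan is to proceed by direct substitution using the change of basis $v_a = \sqrt{2/r}\,\sum_j \sin\bigl((a+1)j\pi/r\bigr)/\sqrt{(-1)^{j+1}} \, e_j$ together with \cref{eqn:trig:id} and its signed variant
\[
	\frac{2}{r} \sum_{k=1}^{r-1} (-1)^{k+1} \sin\bigl(\tfrac{(a+1)k\pi}{r}\bigr) \sin\bigl(\tfrac{(b+1)k\pi}{r}\bigr) = \delta_{a+b,r-2},
\]
which follows from \cref{eqn:trig:id} via the elementary identity $(-1)^{k+1} \sin\bigl(\tfrac{(a+1)k\pi}{r}\bigr) = \sin\bigl(\tfrac{(r-a-1)k\pi}{r}\bigr)$. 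With these in hand I would first dispose of the pairing, unit, and TFT statements. The pairing expands to $\eta(v_a,v_b) = \frac{2}{r}\sum_i (-1)^{i+1}\sin\bigl(\tfrac{(a+1)i\pi}{r}\bigr)\sin\bigl(\tfrac{(b+1)i\pi}{r}\bigr)$ (using that $(1/\sqrt{(-1)^{i+1}})^2 = (-1)^{i+1}$), which equals $\delta_{a+b,r-2}$ by the signed identity; the unit $v_0$ matches $\mathbb{1}$ termwise by direct comparison of the two formulas; and the TFT substitution produces
\[
	\widehat{w}_{g,n}(v_{a_1} \otimes \cdots \otimes v_{a_n}) = \sum_{i=1}^{r-1} \Bigl( \prod_{j=1}^n N_{a_j}^i \Bigr) \bigl(h^i\bigr)^{-(2g-2+n)},
\]
where $n$ inverse sign factors $1/\sqrt{(-1)^{i+1}}$ coming from the $N_{a_j}^i$ cancel $n$ sign factors $\sqrt{(-1)^{i+1}}$ from $(h^i)^{-(2g-2+n)}$, leaving the overall sign $\bigl(\sqrt{(-1)^{i+1}}\bigr)^{2g-2} = (-1)^{(i-1)(g-1)}$ and the power $(r/2)^{g-1}$.

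The central computation is the $R$-matrix conjugation. Writing also $e_k = \sum_b M_k^b v_b$, I would set up
\[
	R_{\textup{F}}^{-1}(u)_a^b = \sum_{j,k=1}^{r-1} N_a^j \, R_{\textup{C}}^{-1}(u)_j^k \, M_k^b,
\]
and insert the canonical $R$-matrix from the preceding lemma. A short parity check shows that the four square-root signs $1/\sqrt{(-1)^{j+1}}$, $\sqrt{(-1)^{k+1}}$, $\sqrt{(-1)^k}$, $1/\sqrt{(-1)^j}$ multiply to $1$, independently of the parities of $j$ and $k$; the four sines factorise; and \cref{eqn:trig:id} applied to the sum over $j$ produces $\delta_{a,p}$, pinning the summation index $p$ inside $R_{\textup{C}}^{-1}$ to $p=a$. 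What remains is
\[
	R_{\textup{F}}^{-1}(u)_a^b = \frac{2}{r} \sum_{k=1}^{r-1} \sin\bigl(\tfrac{(a+1)k\pi}{r}\bigr) \sin\bigl(\tfrac{(b+1)k\pi}{r}\bigr) \, \ms{B}_{r,a}\bigl((-1)^{k+1} u/\lambda^r \bigr).
\]

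To conclude, I would decompose $\ms{B}_{r,a}\bigl((-1)^{k+1} u/\lambda^r\bigr) = \ms{B}_{r,a}^{\textup{even}}(u/\lambda^r) + (-1)^{k+1} \ms{B}_{r,a}^{\textup{odd}}(u/\lambda^r)$. Applying \cref{eqn:trig:id} to the $\ms{B}^{\textup{even}}$ contribution and the signed orthogonality to the $\ms{B}^{\textup{odd}}$ one produces the diagonal term $\delta_{a,b}\,\ms{B}_{r,a}^{\textup{even}}(u/\lambda^r)$ and the anti-diagonal term $\delta_{b,r-2-a}\,\ms{B}_{r,a}^{\textup{odd}}(u/\lambda^r)$, which is the claim. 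The hard part is not conceptual but bookkeeping: the several layers of signs $\sqrt{(-1)^{\bullet}}$ coming from the change of basis, the canonical $R$-matrix, and the unit must cancel coherently, and tracking them accurately is the main source of potential error. The structurally attractive point is that the anti-diagonal block exists precisely because of the $u \mapsto -u$ antisymmetry of $\ms{B}_{r,a}^{\textup{odd}}$ paired with the signed orthogonality identity.
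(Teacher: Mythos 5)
Your proposal is correct and follows essentially the same route as the paper's proof: change to the flat basis, observe that the four $\sqrt{(-1)^{\bullet}}$ factors cancel, use the orthogonality identity to pin the internal index $p$ to $a$, and then split $\ms{B}_{r,a}$ into even and odd parts handled by the unsigned and signed trigonometric identities respectively. The only (harmless) addition is your derivation of the signed identity from \cref{eqn:trig:id}, which the paper simply states as \cref{eqn:signed:trig:id}.
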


\begin{proof}
	The formula for the pairing $ \eta(v_a,v_b)=\delta_{a+b,r-2} $ is easy to check using the trigonometric identity
	\begin{equation}\label{eqn:signed:trig:id}
		\sum_{j=1}^{r-1} (-1)^j \sin \left( \frac{(a+1) j \pi}{r} \right) \sin \left( \frac{(b+1) j \pi}{r} \right)
		=
		-\frac{r}{2} \, \delta_{a+b,r-2}\,.
	\end{equation}
	The TFT follows from an elementary change of basis computation. Let us now compute the $ R $-matrix in the flat basis,
	\begin{align*}
		R_{\textup{F}}^{-1}(u)^{b}_{a}
		&=
		\left( \frac{2}{r} \right)^{2}
		\sum_{i,j=1}^{r-1}
			\frac{\sqrt{(-1)^{j+1}}}{\sqrt{(-1)^{i+1}}}
			\sin \left( \frac{(a+1)i \pi}{r} \right)
			\sin \left( \frac{(b+1)j \pi}{r} \right) \\
		& \qquad\qquad \times
			\frac{\sqrt{(-1)^{j}}}{\sqrt{(-1)^{i}}}
			\sum_{p=0}^{r-2}
				\sin \left( \frac{(p+1)i \pi}{r} \right)
				\sin \left( \frac{(p+1)j \pi}{r} \right)
				\ms{B}_{r,p} \left( (-1)^{j+1} \frac{u}{\lambda^r} \right) \\
		&=
		- \left( \frac{2}{r} \right)^{2}
		\sum_{p=0}^{r-2} \sum_{j=1}^{r-1}
			\sin \left( \frac{(b+1)j \pi}{r} \right)
			\sin \left( \frac{(p+1)j \pi}{r} \right) \\
		& \qquad\qquad \times
			\Biggl(
			\sum_{j=1}^{r-1}
				\sin \left( \frac{(a+1)i \pi}{r} \right)
				\sin \left( \frac{(p+1)i \pi}{r} \right)
			\Biggr)		
				\ms{B}_{r,p} \left( (-1)^{j+1} \frac{u}{\lambda^r} \right) \\
		&=
		\frac{2}{r}
		\sum_{j=1}^{r-1}
			\sin \left( \frac{(b+1)j \pi}{r} \right)
			\sin \left( \frac{(a+1)j \pi}{r} \right)
			\ms{B}_{r,a} \left( (-1)^{j+1} \frac{u}{\lambda^r} \right) \\
	  &=
	  \ms{B}^{\textup{even}}_{r,a} \left( \frac{u}{\lambda^r} \right) \delta_{b,a}
	  +
	  \ms{B}^{\textup{odd}}_{r,a} \left( \frac{u}{\lambda^r} \right) \delta_{b,r-2-a}
	  \, .
	\end{align*}
	In the second line, we used the equation
	\[
		\frac{\sqrt{(-1)^{j+1}}}{\sqrt{(-1)^{i+1}}}
		\frac{\sqrt{(-1)^{j}}}{\sqrt{(-1)^{i}}}
		=
		\frac{\iu}{\iu}
		=
		1 \, ,
	\]
	and simplified further using the aforementioned trigonometric identities.
\end{proof}

We are now ready to prove the main result of the section.

\begin{proof}[{Proof of \cref{thm:SC:Wspin:hat}}]
	From the TR-CohFT correspondence and \cref{prop:CohFT:hat:TR} above, we find that the topological recursion correlators $\omega_{g,n}(z_1,\dots,z_n)$ associated to the spectral curve $\widehat{\mc{S}}_{r,\epsilon}$ compute the intersections of $\widehat{W}^{r,\epsilon}$ and $\psi$-classes in the canonical basis, up to some overall factors that are missing in the TFT in  \cref{prop:CohFT:hat:TR}. To be precise, we have
	\[
		\frac{( - \iu\sqrt{r} \lambda^{\frac{r-2}{2}} )^{2g-2+n}}{ \lambda^{(r-2)(g-1) + |a|} }
		\sum_{i_1,\dots,i_n = 1}^{r-1} \int_{\overline{\mc{M}}_{g,n}}
			\widehat{W}^{r,\epsilon}_{g,n}(e_{i_1} \otimes\cdots\otimes e_{i_n})
			\prod_{j=1}^n \sum_{k_j \ge 0}
			\psi_j^{k_j} d\xi^{k_j,i_j}(z_j) \, .
	\]
	Here the differentials $d\xi^{k,i}$ are given by
	\[
	\begin{split}
		\xi^i(z)
		& =
		- {\textstyle \sqrt{(-1)^{k}}} \,
		\frac{\sin(\frac{i\pi}{r})}{\sqrt{(-1)^{i+1}} r \lambda^{\frac{r-2}{2}}}
		\frac{1}{z - 2\lambda \cos(\frac{i\pi}{r})} \, ,\\
		d\xi^{k,i}(z)
		& =
		d\biggl(
			\left( \frac{1}{r U_r(z,\lambda^2)} \frac{d}{dz} \right)^k \xi^i(z)
		\biggr)\,.
	\end{split}
	\]
	Changing to the flat basis amounts to replacing the function $\xi^i(z)$ by the following linear combination
	\begin{multline*}
		\sqrt{\frac{2}{r}} \sum_{i = 1}^{r-1} {\textstyle \sqrt{(-1)^{i+1}}} \sin\left( \frac{(a+1)i\pi}{r} \right) \xi^i(z) = \\
		=
		\frac{- \iu}{\sqrt{r} \lambda^{\frac{r-2}{2}}}
		\frac{2}{r} \sum_{i = 1}^{r-1} \sin\left( \frac{(a+1)i\pi}{r} \right) \sin\left( \frac{i\pi}{r} \right) \frac{1}{z - 2\lambda \cos(\frac{i\pi}{r})}\,.
	\end{multline*}
	Denote the above function by $\frac{\iu \lambda^a}{\sqrt{r} \lambda^{\frac{r-2}{2}}} \hat{\xi}^a(z)$. Thus, we find
	\[
	\begin{split}
		&
		\widehat{\omega}_{g,n}^{r,\epsilon}(z_1,\dots,z_n) = \\
		& \quad =
		\frac{( - \iu\sqrt{r} )^{2g-2}}{\lambda^{|a|}}
		\sum_{a_1,\dots,a_n = 0}^{r-2} \int_{\overline{\mc{M}}_{g,n}}
			\widehat{W}^{r,\epsilon}_{g,n}(v_{a_1} \otimes\cdots\otimes v_{a_n})
			\prod_{j=1}^n \sum_{k_j \ge 0}
			\psi_j^{k_j} \lambda^{a_j} d\hat{\xi}^{k_j,a_j}(z_j) \\
		& \quad =
		( -r )^{g-1}
		\sum_{a_1,\dots,a_n = 0}^{r-2} \int_{\overline{\mc{M}}_{g,n}}
			\widehat{W}^{r,\epsilon}_{g,n}(v_{a_1} \otimes\cdots\otimes v_{a_n})
			\prod_{j=1}^n \sum_{k_j \ge 0}
			\psi_j^{k_j} d\hat{\xi}^{k_j,a_j}(z_j) \, .
	\end{split}
	\]
	 To conclude, let us simplify the expression for the function $\hat{\xi}^a(z)$. We have
	\[
	\begin{split}
		\hat{\xi}^a(z)
		& =
		\frac{-2}{r \lambda^a} \sum_{i = 1}^{r-1}
			\sin\left( \frac{(a+1)i\pi}{r} \right) \sin\left( \frac{i\pi}{r} \right)
			\frac{1}{z - 2\lambda \cos(\frac{i\pi}{r})} \\
		& =
		\frac{-2}{r \lambda^a U_{r}(z,\lambda^2)}
		\sum_{i = 1}^{r-1}
			\sin\left( \frac{(a+1)i\pi}{r} \right) \sin\left( \frac{i\pi}{r} \right)
			\frac{U_{r}(z,\lambda^2)}{z - 2\lambda \cos(\frac{i\pi}{r})}\,.
	\end{split}
	\]
	We can now apply \cref{lem:Uroot} and simplify using the trigonometric identity \eqref{eqn:signed:trig:id} to get
	\[
	\begin{split}
		& \hat{\xi}^a(z) = \\
		& =
		\frac{- 2}{r \lambda^a U_{r}(z,\lambda^2)}
		\sum_{i = 1}^{r-1} \sum_{p=0}^{r-2}
			(-1)^{i+1} \lambda^{r-p-2}
			\sin\left( \frac{(a+1)i\pi}{r} \right) \sin\left( \frac{(p+1)i\pi}{r} \right)
			U_{p+1}(z,\lambda^2) \\
		& =
		- \frac{U_{r-1-a}(z,\lambda^2)}{U_{r}(z,\lambda^2)}\, .
	\end{split}
	\]
	This concludes the proof.
\end{proof}

\subsection{The spectral curve for the \texorpdfstring{$v_{1}$ shift}{shift along (0,r,...,0)}}

The goal of this section is to prove the following result.

\begin{theorem}\label{thm:SC:Wspin:tilde}
	Let $\widetilde{\mc{S}}_{r,\epsilon}$ be the $1$-parameter family of spectral curves on $\P^1$ given by
	\begin{equation}\label{eqn:SC:Wspin:tilde}
		x(z) = z^r - r \epsilon z\,,
		\qquad\quad
		y(z) = z\,,
		\qquad\quad
		\omega_{0,2}(z_1,z_2) = \frac{dz_1 dz_2}{(z_1 - z_2)^2} \, .
	\end{equation}
	The CohFT associated to $\widetilde{\mc{S}}_{r,\epsilon}$ is the shifted Witten $r$-spin class $\widetilde{W}^{r,\epsilon}$. More precisely, the topological recursion correlators $\widetilde{\omega}_{g,n}^{r,\epsilon}$ corresponding to the spectral curve $\widetilde{\mc{S}}_{r,\epsilon}$ are
	\begin{equation}\label{eqn:correl:shifted:tilde}
	\begin{split}
		&
		\widetilde{\omega}_{g,n}^{r,\epsilon}(z_1,\dots,z_n) = \\
		& \qquad
		=
		(-r)^{g-1}
		\sum_{a_1,\dots,a_n = 0}^{r-2}
		\int_{\overline{\mc{M}}_{g,n}}
			\widetilde{W}^{r,\epsilon}_{g,n}(v_{a_1} \otimes\cdots\otimes v_{a_n})
			\prod_{i=1}^n \sum_{k_i \ge 0} \psi_i^{k_i} d\tilde{\xi}^{k_i,a_i}(z_i) \, ,
	\end{split}
	\end{equation}
	and the differentials $d\tilde{\xi}^{k,a}(z)$ are given by
	\begin{equation}
		\tilde{\xi}^{a}(z)
		=
		- \frac{z^{r-2-a}}{z^{r-1} - \epsilon} \, ,
		\qquad\quad
		d\tilde{\xi}^{k,a}(z)
		=
		d\left( \left(
				\frac{1}{r (z^{r-1} - \epsilon)} \frac{d}{dz}
			\right)^k
			\tilde{\xi}^{a}(z)
		\right) .
	\end{equation}
\end{theorem}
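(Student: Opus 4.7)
The plan is to mirror the proof of \cref{thm:SC:Wspin:hat}: identify every ingredient of the Eynard--DOSS formula for the spectral curve $\widetilde{\mc{S}}_{r,\epsilon}$ and match it with the $v_1$-shifted Witten data recalled in \cref{sec:Witten:TR}.

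First I would locate the ramification points as the zeros of $x'(z) = r(z^{r-1}-\epsilon)$, namely $\alpha_k = \epsilon^{1/(r-1)} \theta^k$ for $k = 0,\dots,r-2$ with $\theta = e^{2\pi\iu/(r-1)}$. Using $\alpha_k^{r-1} = \epsilon$, direct computation gives $x(\alpha_k) = -(r-1)\epsilon\alpha_k$ and $x''(\alpha_k) = r(r-1)\alpha_k^{r-2}$, from which the local coordinates $\zeta_k$ satisfying $x - x(\alpha_k) = -\zeta_k^2/2$ and the constants $\Delta^k = dy/d\zeta_k(0)$ follow. A suitable global constant $C$ proportional to a power of $\epsilon$ is then fixed so that the canonical TFT attached to $\widetilde{\mc{S}}_{r,\epsilon}$ reproduces (up to the $C^{2g-2+n}$ scaling carried by \cref{thm:Eyn:DOSS}) the TFT $\widetilde{w}^r_{g,n}$ recalled in \cref{sec:Witten:TR}.

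The heart of the proof is the computation of the $R$-matrix. Under the substitution $z = (u/r)^{1/r} w$ in the integral \cref{eqn:R:matrix:TR}, the exponent $(x-x_j)/u$ becomes $\tfrac{w^r}{r} - tw - \tfrac{x_j}{u}$ with $t = \epsilon(r/u)^{(r-1)/r}$, while the Lefschetz thimble $\gamma_j$ is transported to the hyper-Airy contour $\tilde{C}_j$ passing through the critical point $w = t^{1/(r-1)}\theta^j$. One checks that the prefactor $e^{-x_j/u}$ exactly cancels the exponential $e^{-\frac{r-1}{r}\theta^j t^{r/(r-1)}}$ appearing in the asymptotic expansion of $\tAi_{r,j}$. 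After an integration by parts analogous to the one used for the $v_{r-2}$-shift, the integrand is turned into $e^{w^r/r - tw}$ times a polynomial in $w$, so that $R^{-1}(u)^j_i$ in the canonical basis is expressed in terms of the derivatives $\tAi_{r,j}^{(a)}(t)$ and hence, by \cref{prop:hyper:airy}, in terms of the series $\mathsf{A}_r^{(a)}$ and the polynomials $P_m(r,a)$. The translation $T(u)$ is obtained by the same integration-by-parts manoeuvre applied to its defining integral, and the identity $T(u) = u(\mathbb{1} - R^{-1}(u)\mathbb{1})$ reduces to a discrete Fourier orthogonality among the $(r-1)$-th roots of unity, granting flatness of the unit.

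The final step is the change to the flat basis $(v_0,\dots,v_{r-2})$. Since the $v_1$-shifted quantum algebra is $\C[v_1]/(v_1^{r-1}-\epsilon)$, its idempotents correspond precisely to the eigenvalues $\alpha_k$ of multiplication by $v_1$, and the change-of-basis matrix has Vandermonde-type entries of the form $e_k \leftrightarrow \sum_{a=0}^{r-2} \alpha_k^{-a} v_a$ up to normalisation. Applying this transformation to $R^{-1}_{\textup{C}}$ and invoking the discrete Fourier identity $\sum_{k=0}^{r-2} \theta^{k(a-b)} = (r-1)\,\delta$, with $\delta = 1$ iff $a \equiv b \pmod{r-1}$, collapses the a priori dense matrix into the sparse block form supported on $b+m \equiv a \pmod{r-1}$, with coefficients $\bigl(\tfrac{1}{r(r-1)\epsilon^{r/(r-1)}}\bigr)^m P_m(r,a)$, matching verbatim the formula in \cref{sec:Witten:TR}. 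The auxiliary functions $\xi^i(z)$ recombine, via partial fractions based on $z^{r-1} - \epsilon = \prod_k(z-\alpha_k)$, into the compact form $\tilde{\xi}^a(z) = -z^{r-2-a}/(z^{r-1}-\epsilon)$, and the overall prefactor $(-r)^{g-1}$ in \cref{eqn:correl:shifted:tilde} follows from the bookkeeping of $C^{2g-2+n}$ combined with the $\epsilon$-rescalings introduced by the change of basis. The main obstacle I anticipate is precisely this last step: tracking the various powers of $\alpha_k$, $\theta$ and $\epsilon^{1/(r-1)}$ through the Fourier sum and showing that the arithmetic condition $b+m \equiv a \pmod{r-1}$ is exactly what singles out the non-vanishing roots of unity is the most delicate piece of bookkeeping in the proof.
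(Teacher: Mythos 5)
Your proposal follows essentially the same route as the paper: the paper's proof of this theorem is explicitly "completely parallel" to that of \cref{thm:SC:Wspin:hat}, carried out via the same Eynard--DOSS ingredients you list — ramification data, the substitution $z=(u/r)^{1/r}w$ identifying the $R$-matrix and translation integrals with the hyper-Airy functions $\tAi_{r,k}^{(a)}$ and hence with $\mathsf{A}_r^{(a)}$ via \cref{prop:hyper:airy}, the flat-unit check and the Vandermonde/discrete-Fourier change to the flat basis collapsing $R^{-1}$ onto the congruence $a-b\equiv m \pmod{r-1}$, and the partial-fraction recombination of the $\xi^i$ into $\tilde{\xi}^a$. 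All the key steps are correctly identified (your identification $t=\epsilon(r/u)^{(r-1)/r}$, equivalently $t^{r/(r-1)}=r\lambda^r/u$, is the consistent one), so the proposal is correct and matches the paper's argument.
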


Again, in order to prove the theorem we are going to compute the ingredients of the TR-CohFT correspondence. Throughout this section, we will use the parameter $ \lambda $ instead of $ \epsilon $, defined as
\[
	\lambda  := \epsilon^{1/(r-1)}
\] in order to avoid annoying $ (r-1) $th roots appearing. The ramification points of the spectral curve $\widetilde{\mc{S}}_{r,\epsilon}$ are given by
\begin{equation}
	x'(z) = r z^{r-1}- r \lambda^{r-1} = 0\,.
\end{equation}
It is solved by $\alpha_k = \theta^k \lambda$ for $k = 0,\dots,r-2$, where $\theta = e^{\frac{2\pi\iu}{r-1}}$. We have $x(\alpha_k) = x_k = -(r-1) \theta^k \lambda^r$. We choose local coordinates such that
\begin{equation}
	x(z) - x_k = - \frac{\zeta_k^2(z)}{2}\,,
\end{equation}
and a determination of the square root such that
\begin{equation}
	\zeta_{k}(z)
	=
	- \iu \sqrt{r(r-1)} (\theta^k \lambda)^{\frac{r-2}{2}}
	(z - \alpha_k) + O\bigl( (z - \alpha_k)^2 \bigr)\,.
\end{equation}
As a consequence, $\Delta^{k} = \frac{\iu}{\sqrt{r(r-1)} (\theta^k \lambda)^{\frac{r-2}{2}}}$. Moreover, we choose the global constant $C = -\iu \sqrt{r} \lambda^{\frac{r-2}{2}}$, so that $h^k = \frac{\theta^{-k \frac{r-2}{2}}}{\sqrt{r-1}}$. In this way, the TFT (expressed in the canonical basis) is given by
\begin{equation} 
\begin{aligned}
	& V = \C\braket{e_0,\dots,e_{r-2}}\,,
	\qquad
	\eta(e_{k},e_{l}) = \delta_{k,l}\,,
	\qquad
	\bm{1} = \frac{1}{\sqrt{r-1}} \sum_{k=0}^{r-2} \theta^{- k\frac{r-2}{2}} e_k\,, \\
	& \widetilde{w}_{g,n}(e_{k_1}\otimes \cdots \otimes e_{k_n})
	=
	\delta_{k_1,\dots,k_n} (r-1)^{g-1+\frac{n}{2}} \theta^{k(r-2)(g-1+\frac{n}{2})}\,.
\end{aligned}
\end{equation}
Let us compute the other ingredients of the TR-CohFT correspondence.

\begin{lemma}
	In the canonical basis $(e_0,\dots,e_{r-2})$, we have the following:
	\begin{itemize}
		\item
		The auxiliary functions are given by
		\begin{equation}
			\xi^{k}(z)
			=
			\frac{-\iu}{\sqrt{r(r-1)} (\theta^k \lambda)^{\frac{r-2}{2}}} \frac{1}{z - \theta^k \lambda}\,.
		\end{equation}

		\item
		The $R$-matrix elements in the canonical basis (denoted $R^{-1}_{\textup{C}}$) are given by
		\begin{equation}
			R^{-1}_{\textup{C}}(u)_i^j
			=
			\frac{1}{r-1}
			\sum_{s=0}^{r-2}
				\theta^{(i-j)\frac{r-2s-2}{2}}
				\mathsf{A}_r^{(s)} \left( \theta^{-j} \frac{u}{\lambda^r} \right)  .
		\end{equation}

		\item
		The translation is given by $T(u) = u (\bm{1} - R^{-1}(u)\bm{1})$. Hence, the CohFT $\Omega_{g,n} = RT\widetilde{w}_{g,n}$ obeys the flat unit axiom.
	\end{itemize}
\end{lemma}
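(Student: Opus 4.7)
The strategy I would adopt is completely parallel to the proof of the analogous statement for the $v_{r-2}$-shift carried out in the previous subsection. First, for the auxiliary functions, I would apply the defining formula $\xi^k(z) = \int^z \omega_{0,2}(z_0,\cdot)/d\zeta_k(z_0)|_{z_0 = \alpha_k}$ directly: since $\omega_{0,2}(z_1,z_2) = dz_1\,dz_2/(z_1-z_2)^2$ and $\Delta^k = dy/d\zeta_k(\alpha_k)$ has already been read off, this gives $\xi^k(z) = -\Delta^k/(z-\alpha_k)$, which is exactly the claimed formula.

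For the $R$-matrix I would integrate by parts to get
\[
R^{-1}(u)_i^j \;=\; \sqrt{\frac{u}{2\pi}}\int_{\gamma_j} \xi^i \, e^{(x-x_j)/u}\, \frac{dx}{u},
\]
and then perform the rescaling $z = (u/r)^{1/r} w$ together with the identification $t = (r\lambda^r/u)^{(r-1)/r}$. Under this change of variables the exponent becomes the canonical hyper-Airy exponent $e^{\frac{w^r}{r} - tw}$, the measure $dx/u$ becomes $(w^{r-1}-t)\,dw$, and the Lefschetz thimble through $\alpha_j = \theta^j\lambda$ is mapped to the hyper-Airy contour $\tilde{C}_j$ through $\theta^j t^{1/(r-1)}$. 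The rational pole of $\xi^i$ combines with $w^{r-1}-t$ into the telescoping polynomial
\[
\frac{w^{r-1} - (\theta^i t^{1/(r-1)})^{r-1}}{w - \theta^i t^{1/(r-1)}} \;=\; \sum_{s=0}^{r-2} w^s \,(\theta^i t^{1/(r-1)})^{r-2-s},
\]
using $\theta^{i(r-1)} = 1$. The integral is thus expressed as a linear combination of the derivatives $\tAi_{r,j}^{(s)}(t)$, and substituting the asymptotic expansions of \cref{prop:hyper:airy} produces a factor $e^{-(r-1)\theta^j t^{r/(r-1)}/r}$ that cancels $e^{-x_j/u}$ exactly, since $t^{r/(r-1)} = r\lambda^r/u$ and $x_j = -(r-1)\theta^j\lambda^r$. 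Careful bookkeeping of the powers of $u$, $r$, $\lambda$, $t$ will show that all dimensional prefactors collapse, leaving the claimed factor $\tfrac{1}{r-1}$ and the phases $\theta^{(i-j)(r-2s-2)/2}$.

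For the translation, the plan is to apply the same substitution to
\[
T^j(u) \;=\; u\, h^j - C\sqrt{\frac{u}{2\pi}} \int_{\gamma_j} dy \, e^{(x-x_j)/u}.
\]
After integration by parts and substitution, the integral becomes $\int_{\tilde{C}_j} w(w^{r-1}-t)\, e^{\frac{w^r}{r} - tw}\,dw$; one further integration by parts, using $w^{r-1}-t = \tfrac{d}{dw}(\tfrac{w^r}{r} - tw)$ and the vanishing of boundary terms along the Lefschetz thimble, reduces this to $-\int_{\tilde{C}_j} e^{\frac{w^r}{r} - tw}\,dw$, proportional to $\tAi_{r,j}(t)$. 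The $a=0$ case of \cref{prop:hyper:airy} then yields
\[
T^j(u) \;=\; u\, h^j - \frac{u}{\sqrt{r-1}}\,\theta^{-j(r-2)/2}\,\mathsf{A}_r^{(0)}\!\left(\theta^{-j} u/\lambda^r\right).
\]
The identity $T(u) = u(\mathbb{1} - R^{-1}(u)\mathbb{1})$ then reduces to checking $\sum_i R^{-1}_{\textup{C}}(u)_i^j\, h^i = \tfrac{1}{\sqrt{r-1}}\,\theta^{-j(r-2)/2}\,\mathsf{A}_r^{(0)}(\theta^{-j} u/\lambda^r)$, which follows from the orthogonality relation $\sum_{i=0}^{r-2}\theta^{-is} = (r-1)\,\delta_{s,0}$: only the $s=0$ term survives in the double sum. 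The flat-unit property of $\widetilde{W}^{r,\epsilon} = RT\widetilde{w}^{r}$ is then automatic from \cref{prop:TL:TR}.

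The hardest step will be the exponent bookkeeping in the $R$-matrix computation: the power $t^{(r-2-s)/(r-1)}$ coming from the polynomial factor and the power $t^{-(r-2s-2)/(2(r-1))}$ coming from the asymptotic expansion of \cref{prop:hyper:airy} must combine into an $s$-independent power $t^{(r-2)/(2(r-1))}$, while simultaneously the remaining powers of $u$, $r$ and $\lambda$ telescope to $1$. This is precisely what allows the sum over $s$ to be recast cleanly as a sum of roots of unity $\theta^{s(j-i)}$ weighted by $\mathsf{A}_r^{(s)}$, so it is the single computation on which the whole statement rests.
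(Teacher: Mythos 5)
Your proposal is correct and follows essentially the same route as the paper: integration by parts, the rescaling $z=(u/r)^{1/r}w$ with $t^{r/(r-1)}=r\lambda^r/u$, the telescoping of the pole of $\xi^i$ against $w^{r-1}-t$, identification with the hyper-Airy asymptotics, and the roots-of-unity orthogonality for the flat-unit check. The only (immaterial) difference is in the translation term, where you do one extra integration by parts instead of invoking the identity $\tAi_{r,k}^{(r)}(t)=(-1)^{r-1}\bigl(\tAi_{r,k}(t)+t\,\tAi_{r,k}'(t)\bigr)$ as the paper does; these are the same computation.
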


\begin{proof}
	The auxiliary functions are computed as before.	For the $R$-matrix (again, we drop the subscript ``C'' in the proof), we start by integrating by parts:
	\begin{equation*}
		R^{-1}(u)_{i}^{j}
		=
		- \sqrt{\frac{u}{2\pi}} \int_{\gamma_j} d\xi^{i} \, e^{\frac{1}{u}(x - x_j)}
		=
		\sqrt{\frac{u}{2\pi}} \int_{\gamma_j}
			\xi^{i} \, e^{\frac{1}{u}(x - x_j)}
			\frac{dx}{u}\,.
	\end{equation*}
	We perform now the change of variables $z = (\frac{u}{r})^{1/r}w$. We have the relations
	\begin{equation*}
		\frac{x - x_j}{u}
		=
		\frac{w^r}{r} - \lambda^{r-1} \left( \frac{u}{r} \right)^{\frac{1}{r}-1} w - \frac{x_j}{u}\,
		,
		\qquad\quad
		\frac{dx}{u}
		=
		\left(
			w^{r-1} - \lambda^{r-1} \left( \frac{u}{r} \right)^{\frac{1}{r}-1}
		\right)
		dw\,.
	\end{equation*}
	Moreover, $w$ runs along the Lefschetz thimble $\tilde{C}_j$ passing through the critical point $w = \theta^k \lambda (\frac{r}{u})^{1/r}$. As a consequence, we find
	\begin{equation*}
	\begin{split}
		& R^{-1}(u)^{j}_{i}
		=
		-\Delta^i e^{- \frac{x_j}{u}} \sqrt{\frac{u}{2\pi}}
		\int_{\tilde{C}_j}
			\frac{w^{r-1} - \lambda^{r-1} \left( \frac{u}{r} \right)^{\frac{1}{r}-1}}{(\frac{u}{r})^{1/r} w - \alpha_i}
			e^{
				\frac{w^r}{r} - \lambda^{r-1} \left( \frac{u}{r} \right)^{\frac{1}{r}-1} w
			}
			dw \\
		& \quad =
		-\Delta^i e^{- \frac{x_j}{u}} \sqrt{\frac{u}{2\pi}} \left( \frac{u}{r} \right)^{-\frac{1}{r}}
		\sum_{s=0}^{r-2}
		\left( \alpha_i \left( \frac{u}{r} \right)^{-\frac{1}{r}} \right)^{r-2-s}
		\int_{\tilde{C}_j}
			w^s
			e^{
				\frac{w^r}{r} - \lambda^{r-1} \left( \frac{u}{r} \right)^{\frac{1}{r}-1} w
			}
			dw\,.
	\end{split}
	\end{equation*}
	We recognise here the hyper-Airy functions and their derivatives, after the identification $t^{\frac{1}{r-1}} = \lambda^{r-1} \left( \frac{r}{u} \right)$. Expressing the $R$-matrix elements in terms of the $t$-variable, we find
	\begin{equation*}
	\begin{split}
		R^{-1}(u)^{j}_{i}
		& =
		e^{\frac{r-1}{r} \theta^j t^{\frac{r}{r-1}}}
		\sqrt{\frac{2\pi}{r-1}}
		t^{-\frac{r-2}{2(r-1)}}
		\theta^{-i\frac{r-2}{2}}
		\sum_{s=0}^{r-2}
			\frac{ ( \theta^i t^{\frac{1}{r-1}} )^{r-2-s} }{ \theta^{j\frac{r-2}{2}} }
			\left( -\frac{d}{dt} \right)^s \tAi_{r,j}(t) \\
		& =
		\frac{1}{r-1}
		\sum_{s=0}^{r-2}
			\theta^{(i-j)\frac{r-2s-2}{2}}
			\mathsf{A}_r^{(s)}\left( \theta^{-j} \frac{u}{\lambda^r} \right).
	\end{split}
	\end{equation*}
	We can now compute the translation: integrating by parts, we find
	\begin{equation*}
	\begin{split}
		T^k(u)
		& =
		u h^k + \iu \sqrt{\frac{u r}{2\pi}} \lambda^{\frac{r-2}{2}} \int_{\gamma_k} dy \, e^{\frac{1}{u}(x - x_k)} \\
 		& =
 		u h^k - \iu \sqrt{\frac{u r}{2\pi}} \lambda^{\frac{r-2}{2}} \int_{\gamma_k} y \, e^{\frac{1}{u}(x - x_k)} \frac{dx}{u} \, .
	\end{split}
	\end{equation*}
	As before, we perform the change of variables $z = (\frac{u}{r})^{1/r}w$ and set $t^{\frac{1}{r-1}} = \lambda^{r-1} \left( \frac{r}{u} \right)$ to get
	\begin{equation*}
	\begin{split}
		T^k(u)
		& =
		u h^k
		-
		\iu u e^{-\frac{x_k}{u}} \sqrt{\frac{1}{2\pi}} t^{\frac{r-2}{2(r-1)}}
		\int_{\tilde{C}_k}
			\left(
				w^{r} - t w
			\right)
			e^{
					\frac{w^r}{r} - t w
				}
			dw \\
 		& =
 		u h^k
		-
		u e^{-\frac{x_k}{u}} \sqrt{2\pi} t^{\frac{r-2}{2(r-1)}}
		\theta^{-k\frac{r-2}{2}}
		\Bigl(
			(-1)^{r-1} \tAi_{r,k}^{(r)}(t)
			-
			t \tAi_{r,k}'(t)
		\Bigr) .
	\end{split}
	\end{equation*}
	From the relation $\tAi_{r,k}^{(r)}(t) = (-1)^{r-1} \bigl( \tAi_{r,k}(t) + t \, \tAi_{r,k}'(t) \bigr)$, we find
	\begin{equation*}
	\begin{split}
		T^k(u)
		& =
		u h^k
		-
		u e^{-\frac{x_k}{u}} \sqrt{2\pi} t^{\frac{r-2}{2(r-1)}}
		\theta^{-k\frac{r-2}{2}}
		\tAi_{r,k}(t) \\
		& =
		u h^k
		-
		\frac{u}{\sqrt{r-1}} 	\theta^{-k\frac{r-2}{2}}  \mathsf{A}_{r}^{(0)}\left( \theta^{-k} \frac{u}{\lambda^r} \right).
	\end{split}
	\end{equation*}
	One can easily check that the relation $T(u) = u (\bm{1} - R^{-1}(u)\bm{1})$ is satisfied.
\end{proof}

\begin{remark}
	  Givental expressed the $R$-matrix as certain integral representations involving the miniversal deformation of the $A_{r-1}$ singularity in~\cite[sections 5 and 6]{Giv03}. It may be possible to derive the relation between the differential \cref{eqn:hyperAiry}  that the  hyperAiry functions satisfy and the $ R $-matrix for the $ v_1 $-shift using this result.
\end{remark}

The flat basis $(v_0,\dots,v_{r-2})$ in terms of the canonical basis  of the TFT, and the inverse base transformation to the canonical basis are given by the formulae
\begin{equation}
\begin{split}
	v_a &= \frac{1}{\sqrt{r-1}} \sum_{k=0}^{r-2} \theta^{-k \frac{(r-2)}{2} (2a+1)} e_k\, ,  \\
	e_k &=\frac{1}{\sqrt{r-1}}\sum_{a=0}^{r-2}\theta^{k \frac{(r-2)}{2}(2a+1)} v_a\, .
\end{split}
\end{equation}

\begin{proposition}\label{prop:CohFT:tilde:TR}
	In the flat basis $(v_0,\dots,v_{r-2})$, the CohFT associated to the spectral curve $\widetilde{\mc{S}}_{r,\epsilon}$ is expressed as
	\begin{itemize}
		\item
		The pairing and the TFT are expressed as
		\begin{equation}
			\eta(v_{a},v_{b})
			=
			\delta_{a+b,r-2} \, ,
			\qquad
			\widetilde{w}_{g,n}(v_{a_1} \otimes \cdots \otimes v_{a_n})
			=
			(r-1)^g \cdot \delta \, ,
		\end{equation}
		where $\delta$ is equal to $1$ if $r-1$ divides $g-1-|a|$ and $0$ otherwise. Moreover, the unit is given by $\bm{1} = v_0$.

		\item
		The $R$-matrix elements in the flat basis (denoted $R^{-1}_{\textup{F}}$) are given by
		\begin{equation}
			R_{\textup{F}}^{-1}(u)^{b}_{a}
			=
			\sum_{\substack{m \geq 0\\ a-b \equiv m \pmod{r-1}}} P_m(r,a) \left( \frac{1}{r(r-1)} \frac{u}{\lambda^r} \right)^{m} .
		\end{equation}
	\end{itemize}
\end{proposition}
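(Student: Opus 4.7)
The plan is to prove the three assertions (pairing, TFT, $R$-matrix, plus the identification $\mathbb{1}=v_0$) by a direct change-of-basis calculation from the canonical basis $(e_0,\ldots,e_{r-2})$ to the flat basis $(v_0,\ldots,v_{r-2})$, using the explicit linear transformation given just before the statement. The single analytic tool that drives everything is the discrete orthogonality relation for $(r-1)$-th roots of unity,
\[
	\sum_{k=0}^{r-2} \theta^{k\ell}
	=
	(r-1)\, \delta_{\ell \equiv 0 \,(r-1)} \, ,
\]
which will repeatedly collapse double or triple sums into a single surviving term. As a warm-up I would verify that $\mathbb{1}=v_0$ by comparing the given expression for $\mathbb{1}$ in the canonical basis with $v_0=\frac{1}{\sqrt{r-1}}\sum_k \theta^{-k(r-2)/2}e_k$.

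For the pairing, I would expand
\(
	\eta(v_a,v_b) = \frac{1}{r-1}\sum_{k,l} \theta^{-k(r-2)(2a+1)/2}\theta^{-l(r-2)(2b+1)/2}\delta_{k,l},
\)
so that the sum collapses to $\frac{1}{r-1}\sum_k \theta^{-k(r-2)(a+b+1)}$; invoking orthogonality and using $r-2\equiv -1\pmod{r-1}$ reduces the condition to $a+b+1\equiv 0\pmod{r-1}$. Since $0\le a,b\le r-2$ forces the unique solution $a+b=r-2$, this gives $\eta(v_a,v_b)=\delta_{a+b,r-2}$. The TFT computation is analogous: inserting the canonical-basis formula for $\widetilde{w}_{g,n}$ into $\frac{1}{(r-1)^{n/2}}\sum_{k_1,\ldots,k_n}\prod_i\theta^{-k_i(r-2)(2a_i+1)/2}\,\widetilde{w}_{g,n}(e_{k_1}\otimes\cdots\otimes e_{k_n})$, the $\delta_{k_1,\ldots,k_n}$ reduces the multiple sum to a single sum over $k$, and orthogonality produces the divisibility condition $g-1-|a|\equiv 0 \pmod{r-1}$ together with the prefactor $(r-1)^g$.

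For the $R$-matrix the strategy is the same but applied twice. I would write
\[
	R^{-1}_{\textup{F}}(u)^b_a
	=
	\frac{1}{r-1}\sum_{i,j}
		\theta^{-i(r-2)(2a+1)/2}\,
		\theta^{j(r-2)(2b+1)/2}\,
		R^{-1}_{\textup{C}}(u)^j_i ,
\]
substitute the canonical-basis expression
$R^{-1}_{\textup{C}}(u)^j_i = \frac{1}{r-1}\sum_s \theta^{(i-j)(r-2s-2)/2}\,\mathsf{A}_r^{(s)}(\theta^{-j}u/\lambda^r)$, and carry out the sum on $i$ first. After a short index manipulation the exponent of $\theta^i$ simplifies to $-i[a(r-2)+s]$, so by orthogonality only $s\equiv -a(r-2)\equiv a\pmod{r-1}$ survives, which forces $s=a$. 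Then summing on $j$ the exponent of $\theta^j$ becomes $j[b(r-2)+a-m]$ after expanding $\mathsf{A}_r^{(a)}$ as a power series in the coefficients $P_m(r,a)$; a second application of orthogonality selects exactly those $m$ with $a-b\equiv m\pmod{r-1}$, giving the claimed formula.

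The main obstacle is purely bookkeeping: tracking the half-integer exponents $(r-2)(2a+1)/2$ and $(r-2s-2)/2$ simultaneously through two nested orthogonality sums, while correctly reducing each congruence modulo $r-1$ using $r-2\equiv -1\pmod{r-1}$. No new analytic input is required beyond the explicit formulas already established in the canonical basis and the expansion of $\mathsf{A}_r^{(a)}$ in terms of $P_m(r,a)$, so the proof is essentially a careful but routine calculation.
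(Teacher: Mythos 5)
Your proposal is correct and follows essentially the same route as the paper: change of basis from the canonical to the flat frame, followed by the discrete orthogonality relation $\sum_{k=0}^{r-2}\theta^{k\ell}=(r-1)\delta_{\ell\equiv 0\,(r-1)}$ applied first to the $i$-sum (forcing $s=a$) and then to the $j$-sum (forcing $m\equiv a-b \pmod{r-1}$), with the reduction $r-2\equiv -1\pmod{r-1}$ handling the exponents. The paper only writes out the $R$-matrix computation and declares the pairing and TFT ``similar,'' so your explicit treatment of those parts is simply a fuller account of the same argument.
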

\begin{proof}
	We present a proof for the $R$-matrix elements:
	\[
	\begin{split}
		R_{\textup{F}}^{-1}(u)^{b}_{a}
		& =
		\sum_{i,j,s = 0}^{r-2}
			\frac{1}{(r-1)^2}
			\theta^{-i\frac{r-2}{2}(2a+1)}
			\theta^{j\frac{r-2}{2}(2b+1)}
			\theta^{(i-j)\frac{r-2s-2}{2}}
			\mathsf{A}_r^{(s)}\left( \theta^{-j} \frac{u}{\lambda^r} \right) \\
		& =
		\sum_{m \geq 0}
			P_m(r,a) \left( \frac{1}{r(r-1)} \frac{u}{\lambda^r} \right)^{m} \frac{1}{r-1}
			\sum_{i=0}^{r-2} \theta^{i(a-b-m)} \\
		& =
		\sum_{\substack{m \geq 0\\ a-b \equiv m \pmod{r-1}}} P_m(r,a) \left( \frac{1}{r(r-1)} \frac{u}{\lambda^r} \right)^{m} .
	\end{split}
	\]
	The other computations are similar.
\end{proof}

We have computed all the necessary ingredients to prove the main result of this section.

\begin{proof}[{Proof of \cref{thm:SC:Wspin:tilde}}]
	The proof is completely parallel to that of \cref{thm:SC:Wspin:hat}.
\end{proof}

\begin{remark}\label{rem:DS}
	In this remark, we compare the results of this section to the results of \cite{DNOS19} concerning the shifted Witten class. The authors identify the spectral curve corresponding to the shifted Witten classes for almost all semi-simple shifts, as the following curve 
	\begin{equation}\label{eq:curve}
		x = z^r + \alpha_1 z^{r-2} + \cdots + \alpha_{r-1} \,,
		\quad
		y(z) = z\,,
		\quad
		\omega_{0,2}(z_1,z_2) = \frac{dz_1 dz_2}{(z_1 - z_2)^2} \, .
	\end{equation}
	For a point $ (\alpha_1, \cdots, \alpha_{r-1}) $ such that $ x(z) $ has $ r-1 $ distinct branch points, \cite[theorem~7.1]{DNOS19} states that the corresponding CohFT is the shift of the Witten class along the direction $ \tau $, which is defined as follows. Let $k$ be the series $k = x(z)^{1/r} = z + \frac{\alpha_1}{r}z^{-1} + O(z^{-2})$. Define $ \tau_a $ for $ 0 \leq a \leq r-2 $ as the first $r-1$ non-trivial coefficients of the inverse function expansion
	\begin{equation}\label{eq:flatdef}
		z = k + \frac{1}{r} \left( \frac{\tau_{r-2}}{k} + \cdots + \frac{\tau_{0}}{k^{r-1}} \right) + O(k^{-r-1}),
	\end{equation}
	which defines the flat coordinates for the $ A_{r-1} $ Dubrovin--Frobenius manifold\footnote{The definition of the flat coordinates in equation (7-11) of \cite{DNOS19} is incorrect and the right definition is the one here. This does not affect the rest of the proof of theorem 7.3 in \textit{loc.~cit.} } \cite[section~5.1]{Dub04}. Then, the point $ \tau $ is defined as
	\begin{equation}
		\tau = \sum_{a=0}^{r-2} \tau_a v_a.
	\end{equation}
	We note that it is easy to check that the spectral curves we have found do correspond to the corresponding claimed shifts using equation~\eqref{eq:flatdef}. However, theorem~7.1 of \cite{DNOS19} does not apply to the spectral curve $ \widehat{\mathcal{S}}_{r,\epsilon} $ corresponding to the $ v_{r-2} $-shift as the branch points of $ x $ are not pairwise distinct, and thus our \cref{thm:SC:Wspin:hat} is not immediately covered by the results of \textit{loc.~cit.} 
\end{remark}

\section{Witten's \texorpdfstring{$r$}{r}-spin conjecture}
\label{sec:Witten:conj}

In this section, we are interested in understanding the limits of the correlators $\widetilde{\omega}_{g,n}^{r,\epsilon}$ constructed by the topological recursion in the previous section, and the shifted Witten class, as $ \epsilon \to 0 $. We will prove that the $ r $-Airy curve computes the descendants of the Witten $ r $-spin class and use that to  prove Witten's $ r $-spin conjecture.

\subsection{The \texorpdfstring{$ \epsilon \to 0 $}{} limit}

For the shifted Witten class, we can use the following theorem of Pandharipande--Pixton--Zvonkine \cite{PPZ19}. 

\begin{proposition}[{\cite[theorems~8, 9]{PPZ19}}]\label{prop:limit:cohft}
	The constant coefficient in $\epsilon$ of both shifted Witten classes $\widehat{W}^{r,\epsilon}_{g,n}$ and $\widetilde{W}^{r,\epsilon}_{g,n}$ is the Witten class $W^{r}_{g,n}$:
	\begin{equation}\label{eqn:Witten:limit}
		\lim_{\epsilon \to 0} \widehat{W}^{r,\epsilon}_{g,n}(v_{a_1} \otimes\cdots\otimes v_{a_n})
		=
		\lim_{\epsilon \to 0} \widetilde{W}^{r,\epsilon}_{g,n}(v_{a_1} \otimes\cdots\otimes v_{a_n})
		=
		W^r_{g,n}(v_{a_1} \otimes\cdots\otimes v_{a_n}) \, .
	\end{equation}
	Moreover, the parts of $\widehat{W}^{r,\epsilon}_{g,n}(v_{a_1} \otimes\cdots\otimes v_{a_n})$ and $\widetilde{W}^{r,\epsilon}_{g,n}(v_{a_1} \otimes\cdots\otimes v_{a_n})$ of degree higher than $D^{r}_{g,a}$ vanish.
\end{proposition}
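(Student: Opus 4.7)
The plan is a direct degree computation starting from the definition of the shifted classes. By construction, both $\widehat{W}^{r,\epsilon}_{g,n}$ and $\widetilde{W}^{r,\epsilon}_{g,n}$ are polynomials in $\epsilon$ of the form
\[
	\sum_{m \geq 0} \frac{\epsilon^m}{m!}\, p_{m,\ast}\, W^r_{g,n+m}\bigl(v_{a_1}\otimes\cdots\otimes v_{a_n}\otimes v^{\otimes m}\bigr),
\]
with $v=r v_{r-2}$ in the first case and $v=r v_1$ in the second. The constant coefficient in $\epsilon$ is by inspection the $m=0$ summand, which is exactly $W^r_{g,n}(v_{a_1}\otimes\cdots\otimes v_{a_n})$; this gives \eqref{eqn:Witten:limit}.

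The essential input for the degree statement is the fact, recalled in \cref{sec:Witten:TR}, that each $W^r_{g,n+m}(v_{a_1}\otimes\cdots\otimes v_{a_n}\otimes v_{b_1}\otimes\cdots\otimes v_{b_m})$ is supported in pure complex degree $D^r_{g;(a,b)} = \frac{(r-2)(g-1)+|a|+|b|}{r}$ when this number is an integer, and vanishes otherwise. The forgetful map $p_m\colon \overline{\mc{M}}_{g,n+m}\to \overline{\mc{M}}_{g,n}$ has complex relative dimension $m$, hence $p_{m,\ast}$ decreases complex degree by exactly $m$. Therefore the $m$-th summand of the shifted class is a cohomology class of pure complex degree $D^r_{g;(a,b)}-m$.

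It remains to plug in the two specific shifts. For the $v_{r-2}$ shift we have $|b|=m(r-2)$, so the $m$-th summand lies in complex degree
\[
	D^r_{g;a}-\frac{2m}{r},
\]
which is strictly smaller than $D^r_{g;a}$ for every $m\geq 1$. For the $v_1$ shift we have $|b|=m$, so the $m$-th summand lies in complex degree
\[
	D^r_{g;a}-\frac{m(r-1)}{r},
\]
again strictly smaller than $D^r_{g;a}$ for $m\geq 1$. In both cases, no summand contributes in degree exceeding $D^r_{g;a}$, and the unique summand attaining this top degree is $m=0$. This simultaneously proves that the parts of the shifted classes of degree higher than $D^r_{g;a}$ vanish, and reconfirms that the top-degree piece agrees with the Witten class $W^r_{g,n}$. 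The only genuinely nontrivial ingredient is the pure-degree property of the Witten class, which follows from the Polishchuk--Vaintrob/Chiodo construction; once this is granted, the argument is mere bookkeeping and no obstacle arises.
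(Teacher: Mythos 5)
Your argument is correct. The paper itself offers no proof of this proposition---it is quoted directly from \cite[theorems~8, 9]{PPZ19}---but your degree bookkeeping is precisely the standard argument behind those theorems: the $m$-th summand is the pushforward under a map of complex relative dimension $m$ of a class of pure complex degree $D^r_{g;(a,b)}$, hence sits in pure degree $D^r_{g;a}-\tfrac{2m}{r}$ (for the $v_{r-2}$ shift) or $D^r_{g;a}-\tfrac{m(r-1)}{r}$ (for the $v_1$ shift), which is strictly below $D^r_{g;a}$ for $m\geq 1$. The only point you leave implicit is that the same computation shows these degrees become negative for $m$ large, so the sum over $m$ is finite and the shifted class is genuinely a polynomial in $\epsilon$; this is what lets you identify $\lim_{\epsilon\to 0}$ with the $m=0$ summand. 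With that one-line remark added, the proof is complete and self-contained modulo the pure-degree property of the Witten class, which you correctly isolate as the only nontrivial input.
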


Now, we would like to understand the limit on the topological recursion side. We claim that taking the limit commutes with the topological recursion in the following sense. The limit as $ \epsilon \to 0 $ of both spectral curves $ \widetilde{\mc{S}}_{r,\epsilon}$ and $ \widehat{\mc{S}}_{r,\epsilon}$ gives the following spectral curve $\mc{S}_r$ on $\P^1 $, which is known in the literature as the \textit{$r$-Airy spectral curve}:
\begin{equation}
	x(z) = z^r \, ,
	\qquad\quad
	y(z) = z \, ,
	\qquad\quad
	\omega_{0,2}(z_1,z_2) = \frac{dz_1 dz_2}{(z_1-z_2)^2} \, .
\end{equation}
Then, we claim that the correlators constructed by topological recursion on $\mc{S}_r$ coincide with the limit as $ \epsilon \to 0 $ of the correlators constructed by topological recursion on $\widehat{\mc{S}}_{r,\epsilon}$ and $\widetilde{\mc{S}}_{r,\epsilon}$. Below we will prove the claim for a sufficiently general class of spectral curves and show that one of the spectral curves we are interested in fits into this class: $\widetilde{\mc{S}}_{r,\epsilon}$. This is enough for our purposes, since both $\widehat{\mc{S}}_{r,\epsilon}$ and $\widetilde{\mc{S}}_{r,\epsilon}$ allow us to access the same limit (see \cref{rmk-limit} for comments on the proof of the claim for the other spectral curve $\widehat{\mc{S}}_{r,\epsilon}$). In this section, we make the dependence on $\epsilon$ explicit in the topological recursion correlators, in order to avoid any confusion. 

A $1$-parameter family of spectral curves on $\P^1$ indexed by $\epsilon\in\C$ is the data, for each $\epsilon\in\C$, of a spectral curve $\mc{S}_{\epsilon}=(\Sigma, x_{\epsilon},y_{\epsilon},\omega_{0,2}^{\epsilon})$, where the Riemann surface $\Sigma$ is common for all $\epsilon$. The functions $x_{\epsilon}$ and $y_{\epsilon}$ are allowed to vary, as well as the bidifferential $\omega_{0,2}^{\epsilon}$. 

\begin{proposition}\label{prop:limit:tr}
	Let $\mc{S}_{\epsilon}$ be a family of spectral curves indexed by $\epsilon\in\C$ such that in a neighbourhood of $\epsilon=0$, they satisfy the following assumptions.
	\begin{enumerate}
		\item\label{itm:first} $\mc{S}_{\epsilon}$ is defined by an algebraic equation linear in $x_{\epsilon}$:
		\begin{equation}
			P_{\epsilon}(x_{\epsilon},y_\epsilon)=A_{\epsilon}(y) + x_{\epsilon}\,B_{\epsilon}(y_\epsilon) = 0 \, ,
		\end{equation}
		where $A_{\epsilon}(y_\epsilon)$, $B_{\epsilon}(y_\epsilon)$ are polynomials in $y_\epsilon$ and $\epsilon$. The Newton's polygon of this algebraic equation has no interior point, so the Riemann surface $\Sigma$ is of genus $0$. As a consequence, the bidifferentials $\omega_{0,2}^{\epsilon}$ do not depend on $\epsilon$: $\omega_{0,2}(z_1,z_2) = \frac{dz_1 dz_2}{(z_1-z_2)^2}$. 

		\item For $\epsilon \neq 0$, $\mc{S}_{\epsilon}$ has $r-1$ simple ramification points $\alpha_1, \dots, \alpha_{r-1}$, while $\mc{S}_0$ has a single ramification point of degree $r-1$ and is admissible in the sense of \cite{BBCCN23}. In addition, we assume that the branch points are distinct, i.e., that $ x_\epsilon(\alpha_i) \neq  x_\epsilon(\alpha_j) $ for any $ i \neq j $.

		\item The multidifferentials $\omega_{g,n}^{\epsilon}(z_1,\dots,z_n)$ produced by topological recursion admit limits as $\epsilon\to 0$:
		\begin{equation}
			\omega_{g,n}(z_1,\dots,z_n)\coloneqq \lim_{\epsilon \to 0} \, \omega_{g,n}^{\epsilon}(z_1,\dots,z_n) \, .
		\end{equation}
	\end{enumerate}
	Then the multidifferentials $\omega_{g,n}(z_1,\dots,z_n)$ satisfy the local topological recursion \eqref{eqn:local:TR} applied to the spectral curve $\mc{S}_0$. 
\end{proposition}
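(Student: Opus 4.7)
The plan is to exploit the global formulation of topological recursion of Bouchard--Eynard recalled in \cref{rem:ltog}, which unifies the standard topological recursion at simple ramifications and the local topological recursion of \cref{def:localTR} inside a single contour integral over the $r$ global sheets of the covering $x_{\epsilon}\colon\Sigma\to\C$. The strategy is to rewrite $\omega_{g,n}^{\epsilon}$ as such a contour integral for $\epsilon\neq 0$, pass to the limit inside the integral, and recognise the result as the local Bouchard--Eynard formula applied to $\mc{S}_0$.

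First I fix a contour $C$ on $\Sigma$ around $\alpha_0$ that, for all $\epsilon$ in some punctured neighbourhood of $0$, encloses all $r-1$ simple ramification points $\alpha_i(\epsilon)$ of $\mc{S}_\epsilon$ and no other ramification point of $x_\epsilon$. By \cite{BE13}, the standard topological recursion correlator $\omega_{g,n}^{\epsilon}$ defined via \eqref{eqn:TR} equals the integrand of \eqref{eqn:local:TR} integrated along $C$, with the local images $\sigma^i(z)$ replaced by the $r-1$ non-trivial global sheet images $z^{(i)}(\epsilon)$ of $z$ under $x_\epsilon$ in a neighbourhood of $C$, and with $\omega_{0,1}$ and the lower-order correlators replaced by their $\epsilon$-dependent analogues. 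In the simple-ramification case this just packages the sum of residues from \eqref{eqn:TR} at the $\alpha_i(\epsilon)$ into a single contour integral; this is precisely the global rewriting of \cref{rem:ltog} that accommodates the merging of several simple ramifications inside $C$.

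I then pass to the limit $\epsilon\to 0$ inside the contour integral. Assumption (1) makes $\omega_{0,2}$ independent of $\epsilon$ and guarantees polynomial dependence of $x_\epsilon$ and $y_\epsilon$, hence of $\omega_{0,1}^{\epsilon}$, on $\epsilon$; both converge uniformly on the compact contour $C$. The global sheets $z^{(i)}(\epsilon)$ vary continuously with $\epsilon$ and converge on $C$ to $\sigma^i(z)$, where $\sigma$ generates the local cyclic Galois group of order $r-1$ at $\alpha_0$. The sheet-difference denominators $\omega_{0,1}^{\epsilon}(z)-\omega_{0,1}^{\epsilon}(z^{(i)}(\epsilon))$ vanish only at ramification points of $x_\epsilon$, all of which lie strictly inside $C$ for $\epsilon$ small, so these denominators are uniformly bounded away from zero on $C$. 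Assumption (3), combined with an induction on $2g-2+n$ using the contour-integral representation itself, upgrades the pointwise limit $\omega_{g',n'}^{\epsilon}\to\omega_{g',n'}$ to uniform convergence on $C$ for all $(g',n')$ with $2g'-2+n'<2g-2+n$. Therefore limit and integration can be exchanged, and the resulting expression is exactly the right-hand side of \eqref{eqn:local:TR} applied to $\mc{S}_0$, with base point $o$ and contour $C$.

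The main obstacle is twofold: establishing uniform convergence of the lower correlators on $C$, and ensuring that no spurious poles appear in the sheet-difference denominators in the limit. Both are controlled by the distinctness-of-branch-values hypothesis in assumption (2), which keeps the sheets $z^{(i)}(\epsilon)$ separated on $C$ all the way down to $\epsilon=0$ and confines the zeros of the denominators to the interior of $C$. The admissibility of $\mc{S}_0$ in assumption (2) further ensures via \cite[theorem~E]{BBCCN18} that the limiting formula defines a well-posed symmetric multidifferential, completing the identification of $\omega_{g,n}$ as the local topological recursion correlator on $\mc{S}_0$.
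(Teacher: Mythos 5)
Your strategy---rewriting $\omega_{g,n}^{\epsilon}$ as a single contour integral of the global Bouchard--Eynard integrand over a fixed, $\epsilon$-independent contour $C$, and then exchanging limit and integration---is genuinely different from the paper's, which passes through the higher abstract loop equations instead. Unfortunately your very first step has a gap. The global recursion of \cref{rem:ltog} and \eqref{eqn:global:TR} is a sum of residues \emph{at the ramification points only}; it is not the integral of the global integrand over a contour enclosing them, because that integrand has further poles in $z$ at every point $\beta$ with $\sigma^{j}(\beta)=\alpha_k$ for some $j,k$, i.e.\ at the non-ramified preimages of the branch values, where the factors $\omega_{g_i,\cdot}^{\epsilon}(\sigma_{J_i}(z),z_{N_i})$ inherit the high-order poles of the correlators at $\alpha_k$. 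For the families in question (e.g.\ $x_\epsilon(z)=z^r-r\epsilon z$) \emph{all} preimages of \emph{all} branch values collapse to the single limiting ramification point as $\epsilon\to0$, so any fixed $C$ enclosing the $\alpha_i(\epsilon)$ unavoidably encloses these extra poles as well; hence $\frac{1}{2\pi\iu}\oint_C$ of the global integrand equals $\omega_{g,n}^{\epsilon}$ \emph{plus} uncontrolled extra residues. Your analysis only rules out zeros of the sheet-difference denominators on $C$; it never addresses the poles of the lower-order correlators composed with the sheet maps. Shrinking $C$ to exclude those points forces $C$ to depend on $\epsilon$ and degenerate, which ruins the interchange of limit and integration.

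The paper's proof is built precisely to dodge this: it replaces the recursion by the equivalent loop equations \eqref{eqn:higher:loop:equation}, whose building block $Q_{g,n}^{\epsilon}$ of \eqref{eqn:Q} is a symmetric combination over all sheets that, by \cite[lemma~4.7]{BE17}, has poles only at $x_\epsilon(z)=x_\epsilon(z_j)$ and in particular is regular at every preimage of every branch value. The limit $\epsilon\to0$ of the loop equations and of the projection property is then harmless, and the conclusion follows from uniqueness of the solution of loop equations plus projection property on the admissible curve $\mc{S}_0$. Note that this is also where assumption (1) enters essentially---it guarantees admissibility in the sense of \cite[definition~2.7]{BE17} so that \cite[theorem~3.26]{BE17} applies---whereas your argument uses (1) only to freeze $\omega_{0,2}$, which sits uneasily with \cref{rem:limits}, where it is stressed that the statement fails without that assumption. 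To salvage your route you would have to prove that the residues of the global integrand at the non-ramified preimages of branch values vanish or cancel in the signed sum over $I$; doing so is essentially equivalent to re-deriving the loop-equation formulation that the paper invokes.
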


\begin{remark}\label{rem:limits}
	Before proving the statement, we remark that the above proposition does not hold  if we completely drop assumption \eqref{itm:first}. It is easy to find examples of families of spectral curves, where the limit of the correlators \textit{does not}~(!) coincide with the correlators of the limit spectral curve. However, all three assumptions can be relaxed and a more general statement than the one we prove here will appear in \cite{BBCKS23}. Thus, we content ourselves with the statement above that suffices for our purposes.
\end{remark}

\begin{proof}
	Let $U \subset \C$ be a neighbourhood of $\epsilon=0$ for which the assumptions of the proposition hold. For later use, denote by $p_0(z)$ the coefficient of $y^r$ in $P_{\epsilon}(x_{\epsilon},y_\epsilon)$. 

	First, for $\epsilon\in U$, topological recursion on $\mc{S}_{\epsilon}$ produces symmetric multidifferentials $\omega_{g,n}^{\epsilon}(z_1,\dots,z_n)$ (for $ \epsilon = 0 $ this is a consequence of the admissibility condition and \cite[theorem~E]{BBCCN23}). As a result, the following recursive formula, known as the \emph{global topological recursion} (see \cref{rem:ltog}), makes sense for all $\mc{S}_{\epsilon}$ with $\epsilon\in U$:
	\begin{equation}\label{eqn:global:TR}
	\begin{split}
		\omega_{g,n}^{\epsilon}(z_1,\dots,z_n)
		& =
		\sum_{i=1}^{r-1} \Res_{z=\alpha_i} \sum_{\substack{I\subset [r-1] \\ I \neq \varnothing}}
			\frac{(-1)^{|I|+1} \int_{w=o}^{z} \omega_{0,2}(z_1,w)}{\prod_{i\in I} \bigl( \omega_{0,1}^{\epsilon}(z)-\omega_{0,1}^{\epsilon}(\sigma_i(z)) \bigr)} \\
			&\qquad
			\times \sum_{\substack{J\vdash I\cup \{r\} \\ \sqcup_{i=1}^{\ell(J)} N_i =\{2,\dots,n\} \\ \sum_{i=1}^{\ell(J)} g_i = g+\ell(J)-|I|-1 }}^{\textup{no } (0,1)}
				\prod_{i=1}^{\ell(J)} \omega_{g_i,|J_i|+|N_i|}^{\epsilon}(\sigma_{J_i}(z),z_{N_i}) \, ,
	\end{split}
	\end{equation}
	where $o\in\P^1$ is an arbitrary base point, and the set $(\sigma_i(z))_{i \in [r]} $ is the set of global sheet involutions -- that depend on $\epsilon$ -- of the degree $ r $ branched covering $ x_{\epsilon} $ such that $\sigma_r(z) = z$. Moreover, as shown in \cite[theorem 5]{BE13}, it is equivalent to the local definition of the topological recursion (\cref{eqn:local:TR} for $ \epsilon  = 0 $ and \cref{eqn:TR} for $ \epsilon \in U\setminus\{0\} $). 

	Second, the first assumption implies that $\mc{S}_{\epsilon}$ is an admissible spectral curve in the sense of \cite[definition~2.7]{BE17} for all $\epsilon\in U$. Namely:
	\begin{itemize}
		\item the Newton polygon of $\mc{S}_{\epsilon}$ has no interior point,

		\item the curve is smooth as an affine curve (however, it is sufficient to check smoothness at the origin $ (x_{\epsilon},y_\epsilon) = (0,0) $).
	\end{itemize}

	By the second point and the fact that global topological recursion is well-defined for all $\mc{S}_{\epsilon}$, we can apply \cite[theorem~3.26]{BE17}. It states that global topological recursion -- \cref{eqn:global:TR} -- on $\mc{S}_{\epsilon}$ is equivalent to the residue formula:
	\begin{equation}\label{eqn:higher:loop:equation} 
		0 =  \sum_{i=1}^{r-1} \Res_{z=\alpha_i} \left( \int_{w=o}^{z} \omega_{0,2}(z_1,w) \right) Q_{g,n}^{\epsilon}(z,z_2,\dots,z_n) \, ,
	\end{equation} 
	where $o \in \P^1$ is an arbitrary base point, and the multidifferential $Q_{g,n}^{\epsilon}$ is defined by
	\begin{equation}\label{eqn:Q}
	\begin{split}
		Q_{g,n}^{\epsilon}(z,z_2,\dots,z_n)
		& =
		\frac{dx_{\epsilon}(z)}{\de P_{\epsilon}/\de y_\epsilon(z)} p_0(z)
		\sum_{k=1}^{r}(-1)^k y_\epsilon(z)^{r-k} \sum_{\substack{I\subset [r] \\ |I|=k}}
			\frac{1}{d x_\epsilon(z)^k} \\
		&\qquad \times 
			\sum_{\substack{ J \vdash I \\ \sqcup_{i=1}^{\ell(J)} N_i = \{2,\dots,n\} \\ \sum_{i=1}^{\ell(J)}g_i = g+\ell(J)-|I|-1 }}
			\prod_{i=1}^{\ell(J)} \omega_{g_i,|J_i|+|N_i|}^{\epsilon} (\sigma_{J_i}(z),z_{N_i}) \, .
	\end{split}
	\end{equation}
	By equivalent, we mean that there exists a unique collection of multidifferentials $ \omega_{g,n} $ on an admissible spectral curve which satisfies the polarization  condition (also known as the projection property)
	\begin{equation}
		\omega_{g,n}(z_1, z_2, \ldots, z_n)
		=
		\sum_{i=1}^{r-1} \Res_{z = \alpha_i} \left( \int_{w=o}^z \omega_{0,2}(z_1,w) \right) \omega_{g,n}(z, z_2, \ldots, z_n)
	\end{equation} 
	and \cref{eqn:higher:loop:equation}, and that this unique solution is constructed by the global topological recursion.

	The object $Q_{g,n}^{\epsilon}(z,z_2,\dots,z_n)$ is a $1$-form in $z$ which, by \cite[lemma~4.7]{BE17}, has poles only at $x_{\epsilon}(z) = x_{\epsilon}(z_j)$, and in particular it has no pole at the ramification points. Thus, upon taking the  limit as $ \epsilon \to 0 $ of \cref{eqn:Q}, we see that the object $ \lim_{\epsilon \to 0} Q_{g,n}^{\epsilon}(z,z_2,\dots,z_n) $ is well-defined and has no poles except at coinciding points where $x(z) = x(z_j)$. Moreover, on the right-hand side we can replace the $ \omega_{g,n}^{\epsilon}( z_1, \ldots, z_n) $ with their limits as $ \epsilon \to 0 $.

	Now, as the unique solution to  \cref{eqn:Q} is given by the global  topological recursion, we have:
	\begin{enumerate}
		\item $\lim_{\epsilon \to 0}\,\omega_{g,n}^{\epsilon}(z_1,\dots,z_n)=\omega_{g,n}(z_1,\dots,z_n)$;

		\item the correlators $\omega_{g,n}(z_1,\dots,z_n)$ satisfy the global topological recursion (\cref{eqn:global:TR}) for $\mc{S}_0$, and therefore also the local topological recursion on the same curve.
	\end{enumerate}
	This concludes the proof.  
\end{proof}

Finally, we are ready to prove the main result of this section (which recovers the results \cite[theorem~7.3]{DNOS19}, \cite[theorem~1.1]{Mil16} and \cite[corollary~4.11]{BCEG21}).

\begin{theorem}\label{thm:limit}
	The CohFT associated to the $r$-Airy spectral curve on $\P^1$, given by
	\begin{equation}
		x(z)=z^r\,,\qquad\quad
		y(z)=z\,, \qquad\quad
		\omega_{0,2}(z_1,z_2)=\frac{dz_1 dz_2}{(z_1-z_2)^2}\,,
	\end{equation}
	is the Witten $r$-spin class $W^{r}_{g,n}$. More precisely, the  Bouchard--Eynard topological recursion correlators -- \cref{eqn:local:TR} -- corresponding to the $r$-Airy spectral curve are
	\begin{equation}\label{TR:corr}
		\omega_{g,n}(z_1,\dots,z_n)
		=
		(-r)^{g-1} \sum_{a_1,\dots,a_n=0}^{r-2} \int_{\overline{\mc{M}}_{g,n}} W^{r}_{g,n}(v_{a_1}\otimes\cdots\otimes v_{a_n})
		\prod\limits_{i=1}^{n} \sum_{k_i\geq 0} \psi_i^{k_i} d\xi^{k_i,a_i}(z_i) \, ,
	\end{equation}
	and the differentials $d\xi^{k,a}(z)$ are given by
	\begin{equation}
		d \xi^{k,a}(z) = (-1)^k\, r\, \frac{\Gamma \bigl(\frac{a+1}{r}+k+1\bigr)}{\Gamma\bigl(\frac{a+1}{r}\bigr)} \, \frac{d z}{z^{kr+a+2}} \,.
	\end{equation}
\end{theorem}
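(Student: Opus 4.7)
The plan is to pass to the $\epsilon \to 0$ limit in the identity of \cref{thm:SC:Wspin:tilde} for the family $\widetilde{\mc{S}}_{r,\epsilon}$. The CohFT side will be handled by \cref{prop:limit:cohft}, which gives $\widetilde{W}^{r,\epsilon}_{g,n}\to W^{r}_{g,n}$; the topological recursion side by \cref{prop:limit:tr}, which tells us that $\lim_{\epsilon\to 0}\widetilde{\omega}_{g,n}^{r,\epsilon}$ satisfies the local Bouchard--Eynard recursion for the limit spectral curve $\mc{S}_0$, which is precisely the $r$-Airy curve.

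The first step is to verify the hypotheses of \cref{prop:limit:tr} for $\widetilde{\mc{S}}_{r,\epsilon}$. The defining equation $x - y^r + r\epsilon y = 0$ is linear in $x$, and its Newton polygon is the triangle with vertices $(1,0)$, $(0,1)$, $(0,r)$, which contains no interior lattice point (Pick's theorem). For small $\epsilon \neq 0$ the ramification points $\alpha_k = \theta^k \epsilon^{1/(r-1)}$ are simple with pairwise distinct branch values, while at $\epsilon = 0$ the curve degenerates to the $r$-Airy spectral curve, which has a single admissible ramification of order $r-1$ at $z=0$. The existence of $\lim_{\epsilon\to 0}\widetilde{\omega}_{g,n}^{r,\epsilon}$ is then automatic from \cref{thm:SC:Wspin:tilde}: for fixed $(g,n)$ only finitely many descendant indices contribute, by degree reasons on $\overline{\mc{M}}_{g,n}$ (the class $\widetilde{W}^{r,\epsilon}_{g,n}$ has bounded degree by \cref{prop:limit:cohft}), and each term — both the descendant integrals and the rational differentials $d\tilde{\xi}^{k,a}$ — admits a limit.

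The second step is to identify the limiting differentials with the $d\xi^{k,a}$ of the statement. Setting $\epsilon=0$ one has $\tilde{\xi}^{a}(z)\to -z^{-(a+1)}$ and the operator $\tfrac{1}{r(z^{r-1}-\epsilon)}\tfrac{d}{dz}$ becomes $\tfrac{1}{r\,z^{r-1}}\tfrac{d}{dz}$. A short induction on $k$ evaluates the iterate of this operator on $z^{-(a+1)}$ as an explicit monomial in $z^{-1}$, and rewriting the resulting product $\prod_{j=0}^{k}(a+1+jr)$ as $r^{k+1}\,\Gamma(\tfrac{a+1}{r}+k+1)/\Gamma(\tfrac{a+1}{r})$ recovers exactly the stated form of $d\xi^{k,a}(z)$. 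Combining this computation with the two propositions above then yields~\eqref{TR:corr}.

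The main obstacle is really concentrated in the invocation of \cref{prop:limit:tr}, i.e.\ the commutation of topological recursion with $\epsilon \to 0$, despite the confluence of $r-1$ simple ramifications into a single higher-order one in the limit. The structural assumptions of that proposition (linearity in $x$, no interior Newton point, smooth affine model, distinct branch values deforming a single admissible ramification) are tailored to exactly this kind of degeneration, so once they are checked — which is the first step above — no further subtlety arises, and the result follows from a routine matching of the explicit terms on both sides.
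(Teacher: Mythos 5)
Your proposal is correct and follows essentially the same route as the paper: verify the hypotheses of \cref{prop:limit:tr} for the family $\widetilde{\mc{S}}_{r,\epsilon}$ (correctly choosing this family rather than $\widehat{\mc{S}}_{r,\epsilon}$, whose branch points coincide), take the $\epsilon\to 0$ limit of \cref{eqn:correl:shifted:tilde} using \cref{prop:limit:cohft}, and match the limiting differentials. The only difference is that you spell out the computation of $\lim_{\epsilon\to 0} d\tilde{\xi}^{k,a} = d\xi^{k,a}$, which the paper asserts without detail; your Gamma-function evaluation is correct.
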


\begin{proof}
	Consider the shifted Witten $r$-spin class $\widetilde{W}^{r,\epsilon}_{g,n}$: for $\epsilon\neq 0$, the correlators $\widetilde{\omega}_{g,n}^{\epsilon}(z_1,\dots,z_n)$ computed by topological recursion from the spectral curve $\widetilde{\mc{S}}_{r,\epsilon}$ satisfy \cref{eqn:correl:shifted:tilde}. 

	On the CohFT side,  we have already seen in \cref{eqn:Witten:limit} that
	\[
		\lim_{\epsilon \to 0} \widetilde{W}^{r,\epsilon}_{g,n}(v_{a_1} \otimes\cdots\otimes v_{a_n})
		=
		W^r_{g,n}(v_{a_1} \otimes\cdots\otimes v_{a_n})\,.
	\]
	Moreover, $\lim_{\epsilon \to 0}\, d \tilde{\xi}^{k,a}(z) = d \xi^{k,a}(z)$. Therefore, the correlators of \cref{eqn:correl:shifted:tilde}  have well-defined limits as $ \epsilon \to 0 $, which are the following:
	\begin{multline}\label{eqn:limit:correlators}
		\lim_{\epsilon \to 0} \, \widetilde{\omega}_{g,n}^{r,\epsilon}(z_1,\dots,z_n) = \\
		=
		(-r)^{g-1} \sum_{a_1,\dots,a_n=0}^{r-2} \int_{\overline{\mc{M}}_{g,n}} W^{r}_{g,n}(v_{a_1}\otimes\cdots\otimes v_{a_n})\prod\limits_{i=1}^{n} \sum_{k_i\geq 0} \psi_i^{k_i} d \xi^{k_i,a_i}(z_i)\,.
	\end{multline}

	On the topological recursion side, the family of spectral curves $\widetilde{\mc{S}}_{r,\epsilon}$ satisfies the assumptions of \cref{prop:limit:tr}. Indeed:
	\begin{enumerate}
		\item for $\epsilon\neq 0$, $\widetilde{\mc{S}}_{r,\epsilon}$ is defined by the algebraic equation:
		\[
			x- y^r+r\,\epsilon \,y=0\,;
		\]

		\item for $\epsilon \neq 0$, the $r-1$ ramification points of $\widetilde{\mc{S}}_{r,\epsilon}$, namely $\epsilon^{\frac{1}{(r-1)}} e^{\frac{2\pi k \iu}{r-1}}$ for $k = 0,\dots,r-2$, are simple and the branch points are distinct. At $\epsilon=0$, the spectral curve $\widetilde{\mc{S}}^{r,0}$ is the $r$-Airy curve, which is admissible;
		
		\item \cref{eqn:limit:correlators} shows that the multidifferentials $\widetilde{\omega}_{g,n}^{r,\epsilon}(z_1,\dots,z_n)$ have well-defined limits as $\epsilon\to 0$.
	\end{enumerate}
	Therefore, we can apply \cref{prop:limit:tr}: for $2g-2+n>0$, local topological recursion on the $r$-Airy spectral curve constructs the correlators
	\[
		\omega_{g,n}(z_1,\dots,z_n)
		=
		(-r)^{g-1} \!\!\!\!\!\!
		\sum_{a_1,\dots,a_n=0}^{r-2} \int_{\overline{\mc{M}}_{g,n}}
			W^{r}_{g,n}(v_{a_1}\otimes\cdots\otimes v_{a_n})
			\prod\limits_{i=1}^{n} \sum_{k_i\geq 0} \psi_i^{k_i} d \xi^{k_i,a_i}(z_i)\,. \qedhere
	\]
\end{proof}

\begin{remark}\label{rmk-limit}
	In principle, the above proof should work in the case of the shifted Witten class $ \widehat{W}^{r,\epsilon}_{g,n} $ as well. However,  the corresponding spectral curve $ \widehat{S}^{r,\epsilon} $ has coinciding branch points (indeed, $ x(\alpha_k) = (-1)^k 2 \epsilon^{r/2}  $), and thus condition (2) in \cref{prop:limit:tr} does not hold. In forthcoming work \cite{BBCKS23}, this condition on coinciding branch points will be relaxed and then the above proof goes through analogously.
\end{remark}

\subsection{Witten's \texorpdfstring{$r$}{r}-spin conjecture}

In 1993, Witten formulated the conjecture \cite{Wit93} that the generating series of $r$-spin intersection numbers satisfy the $r$-KdV hierarchy (or $r$-th Gel'fand--Dikii hierarchy) and the string equation. While the conjecture for $r=2$ has received many proofs by now \cite{Kon92, Mir07+, OP09, ABCGLW20}, the general $r$ case was only proved by Faber--Shadrin--Zvonkine in 2010 \cite{FSZ10}.

The generating series studied in Witten's conjecture is given by
\begin{equation}
	F_{r\textup{-spin}}(\bm{t};\hbar)
	=
	\!\!\! \sum_{\substack{g \geq 0, n \geq 1 \\ 2g-2+n > 0}} \!\!\! \frac{\hbar^{g-1}}{n!} \!\!\!
	\sum_{a_1,\dots,a_n=0}^{r-2}
	\int_{\overline{\mc{M}}_{g,n}} W^{r}_{g,n}(v_{a_1}\otimes\cdots\otimes v_{a_n})
		\prod_{i=1}^{n} \sum_{k_i\geq 0} \psi_i^{k_i} \, t_{k_i,a_i} \, .
\end{equation}
In this section we take advantage of the fact that we have proved topological recursion for $r$-spin intersection numbers without making use of the Faber--Shadrin--Zvonkine result, which allows us to deduce Witten's $r$-spin conjecture from \cref{thm:limit}. 

\begin{theorem}\label{thm:Witten}
	The generating series of $r$-spin intersection numbers $Z_{r\textup{-spin}} = \exp(F_{r\textup{-spin}})$ is the tau function of the $r$-KdV hierarchy satisfying the string equation (unique up to multiplicative constant).
\end{theorem}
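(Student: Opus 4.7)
The strategy is to chain together two known equivalences, using \cref{thm:limit} as the input. By that theorem, the correlators $\omega_{g,n}$ produced by the local Bouchard--Eynard topological recursion on the $r$-Airy spectral curve are exactly the descendant integrals of the Witten $r$-spin class, expanded in the basis of auxiliary differentials $d\xi^{k,a}$. Packaging these correlators into a partition function and identifying the expansion coefficients of $d\xi^{k,a}$ with the time variables $t_{k,a}$, one recovers precisely $Z_{r\textup{-spin}} = \exp(F_{r\textup{-spin}})$ up to the standard normalization coming from the unstable terms $\omega_{0,1}$ and $\omega_{0,2}$.

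The first step is then to invoke the main result of \cite{BBCCN18}, which recasts topological recursion on the $r$-Airy spectral curve as a higher quantum Airy structure. Concretely, this asserts that $Z_{r\textup{-spin}}$ is the unique (up to an overall constant) formal series in the $t_{k,a}$ annihilated by a specific collection of differential operators $\{H_m^{(i)}\}_{i=2,\dots,r}$ of orders $2,3,\dots,r$. These operators furnish a representation of (a subalgebra of) the $W(\mathfrak{gl}_r)$-algebra, and the equivalence to topological recursion is theorem~E of \textit{loc.~cit.}

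The second step is to identify these $W$-constraints with the ones proved in \cite{AvM92} to characterize the tau function of the $r$-KdV hierarchy satisfying the string equation. This identification is the content of section~6 of \cite{BBCCN18}, where the higher Airy structure attached to the $r$-Airy curve is shown to match (after the standard change of variables from $t_{k,a}$ to the KdV times $T_{k}$, $k \not\equiv 0 \pmod{r}$) the Adler--van Moerbeke operators. Since \cite{AvM92} establishes that these $W$-constraints determine the $r$-KdV tau function subject to the string equation uniquely up to multiplicative constant, and $Z_{r\textup{-spin}}$ satisfies the same constraints, the two must coincide, which is Witten's $r$-spin conjecture.

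The main obstacle, and the only point requiring real care, is the bookkeeping of normalizations and dictionaries between the three languages involved: the basis of differentials $d\xi^{k,a}$ governing the topological recursion side, the time variables entering the higher quantum Airy structure of \cite{BBCCN18}, and the KdV times used in \cite{AvM92}. One must verify that the rescalings introduced by the prefactor $(-r)^{g-1}$ in \cref{TR:corr}, by the explicit form of $d\xi^{k,a}$, and by the normalization conventions of the two sets of $W$-operators line up consistently; the translations are already implicit in \cite{BBCCN18}, so this ultimately amounts to a careful comparison rather than new mathematics. Once the dictionaries are aligned, the theorem follows immediately.
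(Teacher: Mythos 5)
Your proposal is correct and follows essentially the same route as the paper: it chains \cref{thm:limit} with the equivalence between Bouchard--Eynard topological recursion on the $r$-Airy curve and the $W(\mathfrak{gl}_r)$-constraints of \cite{BBCCN18}, and then with the Adler--van Moerbeke characterisation of the $r$-KdV tau function satisfying the string equation from \cite{AvM92}. The paper's proof makes the same sequence of identifications (via the higher abstract loop equations and the partition function $Z_{(r,r+1)}$), so there is nothing substantive to add.
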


\begin{proof}
	Our proof follows from translating the topological recursion for the $r$-Airy spectral curve into the following equivalent settings:
	\begin{enumerate}
		\item The Bouchard--Eynard topological recursion for the $\omega_{g,n}$ from \eqref{TR:corr} is equivalent to the so-called higher abstract loop equations for $\omega_{g,n}$ (see \cite{BE17} or \cite[appendix~C]{BBCCN23}).

		\item The higher abstract loop equations for $\omega_{g,n}$ are equivalent to a system of differential equations \cite[theorem~5.30]{BBCCN23}, which are identified for the $r$-Airy curve with the higher quantum Airy structure associated to the partition function $Z_{(r,r+1)}$ \cite[section~6.1]{BBCCN23}. This higher Airy structure corresponds to specific $W(\mathfrak{gl}_r)$ constraints (we call them $W_r$-constraints here), which determine uniquely its solution $Z_{(r,r+1)}$ up to a multiplicative constant.

		\item In \cite{AvM92}, the authors proved the existence of a tau function $Z_{r\textup{-tau}}$ of the $r$-KdV hierarchy satisfying the string equation. They also showed that the $r$-KdV hierarchy together with the string equation imply the $W_r$-constraints and that the $W_r$-constraints have a unique solution if this exists. Hence they deduce that the $r$-KdV hierarchy and the string equation are equivalent to the $W_r$-constraints. From the previous equivalence, we conclude that $Z_{(r,r+1)} = Z_{r\textup{-tau}}$.

		\item Our \cref{thm:limit} tells us that the partition function associated to the differentials $\omega_{g,n}$ from \eqref{TR:corr}, obtained by the Bouchard--Eynard topological recursion applied to the $r$-Airy spectral curve, is $Z_{r\textup{-spin}}$. The second equivalence gives us $Z_{(r,r+1)} = Z_{r\textup{-spin}}$.
	\end{enumerate}
	We have shown that $Z_{r\textup{-spin}} = Z_{r\textup{-tau}}$, as was needed to recover Witten's $r$-spin conjecture (Faber--Shadrin--Zvonkine theorem).
\end{proof}

\begin{remark}
	In \cite{BCEG21}, the ribbon graphs introduced by Kontsevich in order to prove Witten's conjecture ($r=2$) were generalised to the $r$-spin setting. The generating series of these graphs satisfy topological recursion for the spectral curve from \eqref{eqn:SC:Wspin:tilde}, which corresponds to the shifted Witten class $\widetilde{W}^{r,\epsilon}$, and for its limit $\epsilon  \rightarrow 0$, i.e. the $r$-Airy curve, which corresponds to Witten $r$-spin class. The fact that the $\omega_{g,n}$ behave nicely in the limit was justified in \cite{BCEG21} for that particular family. On the other hand, the generating series of the graphs is related to a generalised Kontsevich matrix model, which was proved to satisfy the $r$-KdV hierarchy and string equation in \cite{AvM92} in the limit $\epsilon \rightarrow 0$. Using this the $r$-spin intersection numbers can be expressed in terms of the graphs \cite[theorem~4.8]{BCEG21}. \Cref{thm:SC:Wspin:tilde} allows us to extend this ELSV-like identification to the shifted class $\widetilde{W}^{r,\epsilon}$.
\end{remark}

\appendix
\section{Exponential integrals}
\label{app:exp:integrals}

The aim of this appendix is to discuss the asymptotic behaviour as $s \to \infty$ of integrals of the form
\begin{equation}
	I_{\Gamma}(s) = \int_{\Gamma} g(z) e^{s h(z)} dz\,,
\end{equation}
where $g$ and $h$ are polynomials and $\Gamma$ is a path in $\C$. In addition, we require that $ h $ has finitely many critical points which are all non-degenerate, and we denote the set of critical points by $ \mathfrak a $.  In order to make sense of the above integral, the integration cycle $\Gamma$ should represent an element of the relative homology $H_1(\C, \C_{-T};\Z)$ for very large $T$, where we set
\begin{equation}
	\C_{-T} = \Set{ z \in \C | \Re(s h(z)) \le - T}\, .
\end{equation}
A basis for the relative homology $H_1(\C, \C_{-T};\Z)$ that is best suited for computations of the asymptotic expansion of $I$ is
given by the Lefschetz thimbles. Such a basis $(\Gamma_a)_{a \in \mathfrak{a}}$ of $H_1(\C, \C_{-T};\Z)$ is indexed by the critical point of $h$, and it is defined as follows:
\begin{equation}
	\Gamma_a
	=
	\Set{
		z(t) | \lim_{t \to -\infty} z(t) = a
	},
\end{equation}
where $z \colon \R \to \C$ is a solution of the steepest descent equation
\begin{equation}
	\frac{dz}{dt} = - \frac{\de \overline{sh}}{\de \overline{z}}\, .
\end{equation}
From the above definition, we conclude that $\Gamma_a$ satisfies two key properties: 1) the real part $\Re(s h)$ decreases on $\Gamma_a$ in directions away from $a$, and 2) the imaginary part $\Im(s h)$ is constant along $\Gamma_a$. Indeed, from the steepest descent equation and the Cauchy--Riemann equation $\de_{z} \overline{sh} = 0$, we deduce that along the flow line
\begin{equation}
	\frac{d \overline{sh}}{dt}
	=
	\frac{\de \overline{sh}}{\de z} \frac{dz}{dt} + \frac{\de \overline{sh}}{\de \overline{z}} \frac{d\overline{z}}{dt}
	=
	- \left| \frac{dz}{dt} \right|^2.
\end{equation}
By taking the real and the imaginary parts of the above equation, we deduce the two key properties:
\begin{equation}\label{eqn:decrease:along:flow}
	\frac{d\Re(sh)}{dt}
	=
	- \left| \frac{dz}{dt} \right|^2
	\le 0 \,,
	\qquad\quad
	\frac{d\Im(sh)}{dt}
	= 0 \,.
\end{equation}
Once we pick an orientation of $\Gamma_a$, it will define a cycle in the relative homology $H_1(\C,\C_{-T};\Z)$ if it is closed. This fails precisely if there is a flow line that starts at $a$ at $t = - \infty$ and ends at another critical point $b \ne a$ at $t = + \infty$. If this is the case, the flow line is called a \emph{Stokes line}. If not, the flow line is called a \emph{Lefschetz thimble}.

Notice that, in order to have a Stokes line connecting the critical points $a$ and $b$, a necessary condition is that the imaginary part of $s h(z)$ coincides at the two critical points: $\Im(s h(a)) = \Im(s h(b))$. In particular, for a generic phase of $s$, this condition is not satisfied and the set of Lefschetz thimbles $(\Gamma_a)_{a \in \mathfrak{a}}$ is a basis of the relative homology $H_1(\C,\C_{-T};\Z)$. The values of $s$ for which a Stokes line occur are also called \emph{Stokes rays}, which divide the complex $s$-plane into open sectors, sometimes called \emph{Stokes sectors}.

The advantage of considering Lefschetz thimbles is that we can easily compute the asymptotic expansion of $I_{\Gamma_a}(s)$ as $|s| \to \infty$ using the steepest descent method, as $I_{\Gamma_a}(s)$ is dominated by the contribution from the critical point $a$. Indeed, choose local coordinates around $a$ such that $h(z) - h(a) = - \frac{\zeta^2}{2}$, so that we have a local change of variables $z = z(\zeta)$. Moreover, consider the Taylor expansion
\begin{equation}
	g(z(\zeta)) dz(\zeta)
	=
	\left(
		\sum_{k \ge 0} c_k \zeta^{2k} 
		+
		\text{odd terms}
	\right) d\zeta\,.
\end{equation}
Then we get the following asymptotic expansion for $|s| \to + \infty$ with $s$ not lying in a Stokes ray:
\begin{equation}\label{asymt:I}
	I_{\Gamma_a}(s)
	\sim
	\sqrt{\frac{2\pi}{s}} e^{s h(a)}
	\sum_{k \ge 0} (2k-1)!! c_k s^{-k}\,,
\end{equation}
with the convention $(-1)!! = 1$. It easy to check that the first coefficient is given by $c_0 = \frac{g(a)}{\sqrt{|h''(a)|}} e^{\iu \vartheta(a)}$, with $\vartheta(a) = \frac{\pi - \arg(h''(a))}{2}$.

Although the asymptotic expansion of $I_{\Gamma_a}(s)$ is independent of the Stokes sectors, the downside of considering such integrals is that the Lefschetz thimble $\Gamma_a$ might jump when $s$ crosses a Stokes ray. In particular, the functions $I_{\Gamma_a}(s)$ are holomorphic functions of $s$ on each Stokes sector, but they might jump when $s$ crosses a Stokes ray.

\begin{example}
	Consider the case of $g(z) = 1$ and $h(z) = \frac{z^3}{3} - z$:
	\begin{equation}\label{eqn:Airy:expl}
		I_{\Gamma}(s) = \int_{\Gamma} e^{s(\frac{z^3}{3} - z)} dz\,.
	\end{equation}
	The critical points of $h$ are given by $a_{\pm} = \pm 1$, so that $h(\pm 1) = \mp \frac{2}{3}$. A necessary condition to have a Stokes ray is that $\Im(s h(1)) = \Im(s h(-1))$, which is equivalent to $s$ being real. In other words, for $s \not\in \R^{\times}$, we have solutions $\Gamma_{\pm}$ to the steepest descent equation starting at $\pm 1$ and forming a base of the relative homology. Moreover, we can actually see the occurrence of a Stokes ray for $s \in \R^{\times}$. For instance, let us consider $s \in \R_+$. The corresponding flow lines are pictured in \cref{fig:Airy:Stokes}, and one can see that a Stokes line starting at $-1$ and ending at $+1$.

	\begin{figure}
   \centering
   \begin{subfigure}[b]{0.4\textwidth}
     \centering
     \includegraphics[width=\textwidth]{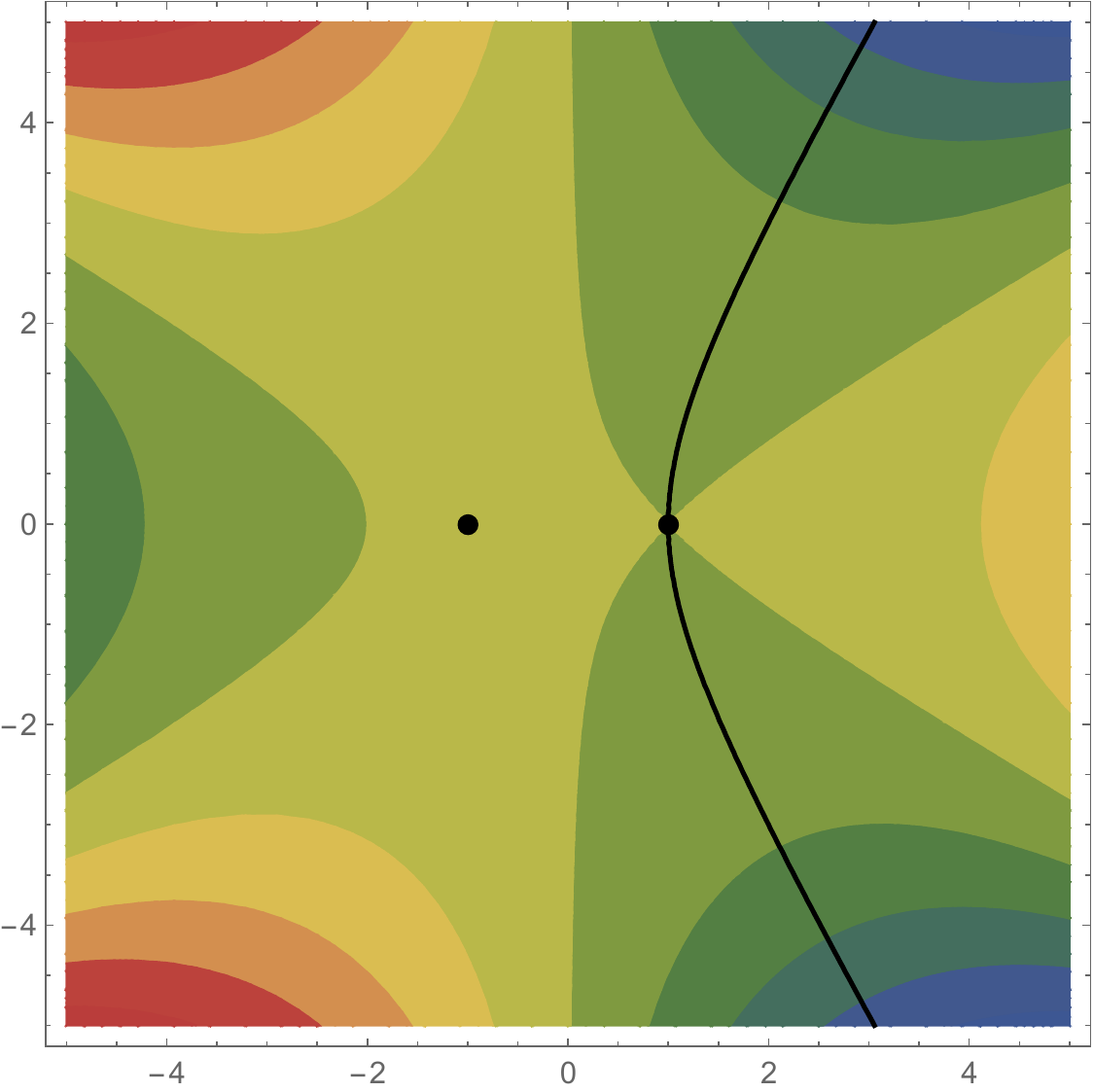}
     \caption{The Lefschetz thimble associated to $z = +1$.}
     \label{fig:Airy:Stokes:0}
   \end{subfigure}
   \hspace{0.15\textwidth}
   \begin{subfigure}[b]{0.4\textwidth}
     \centering
     \includegraphics[width=\textwidth]{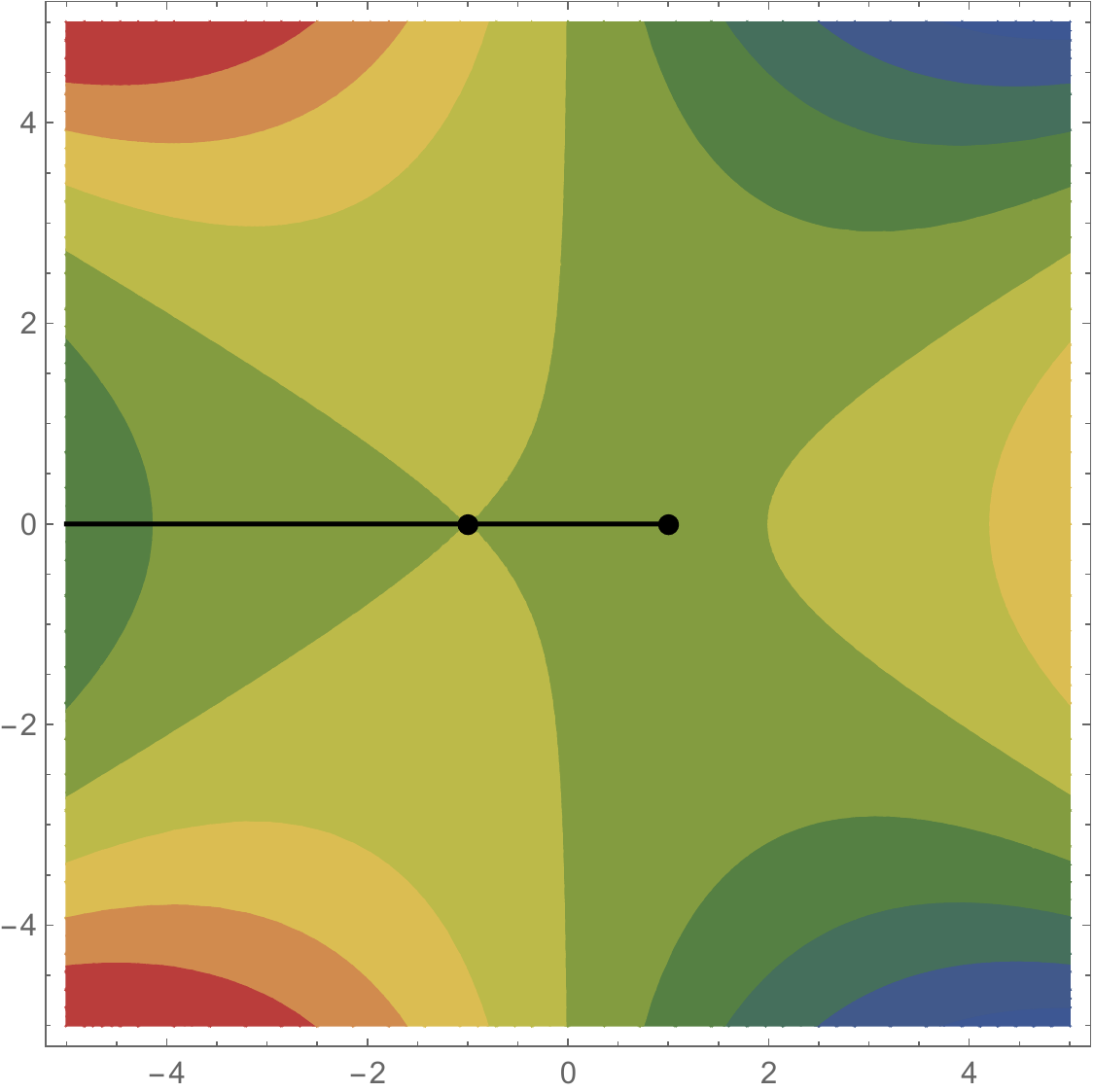}
     \caption{The Stokes line starting at $z = -1$ and ending at $z = +1$.}
     \label{fig:Airy:Stokes:1}
   \end{subfigure}
   \caption{Flow lines for $s = 1$. The colour gradient $[\text{blue},\text{red}]$ corresponds to the values of $\Re(s h) \in [-\infty,+\infty]$.}
   \label{fig:Airy:Stokes}
	\end{figure}

	However, the introduction of a small imaginary part $\epsilon$ to the parameter $s$ will give a well-defined Lefschetz thimble $\Gamma_{-}$. \Cref{fig:Airy:Lefschetz:jump} shows that, depending on the sign of $\epsilon$, the Lefschetz thimble $\Gamma_{-}$ will jump from one side of the negative real axis to the other.

	\begin{figure}
   \centering
   \begin{subfigure}[b]{0.4\textwidth}
     \centering
     \includegraphics[width=\textwidth]{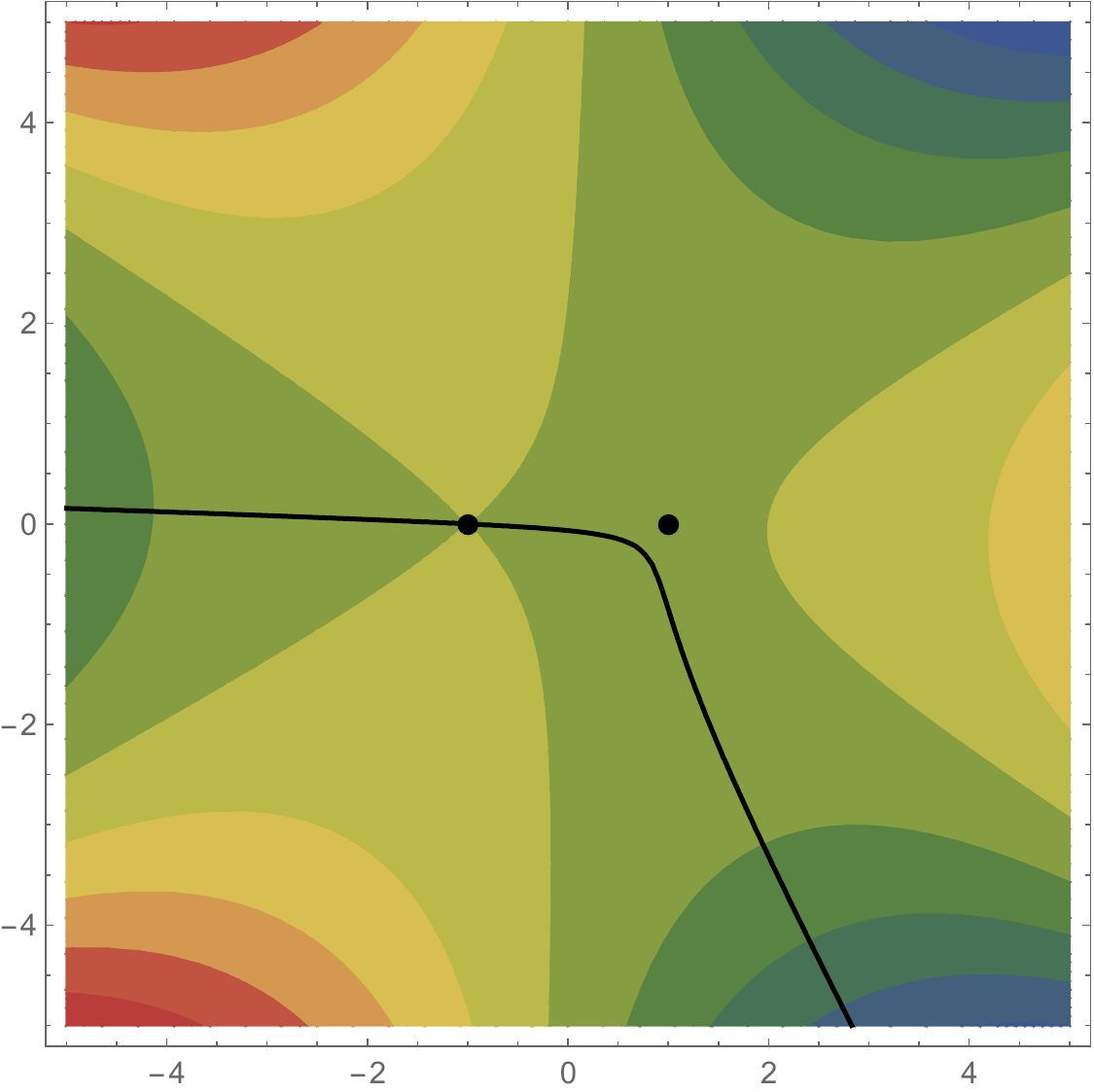}
     \caption{$s = 1 + \iu 0.1$}
     \label{fig:Airy:Lefschetz:jump:pos}
   \end{subfigure}
   \hspace{0.15\textwidth}
   \begin{subfigure}[b]{0.4\textwidth}
     \centering
     \includegraphics[width=\textwidth]{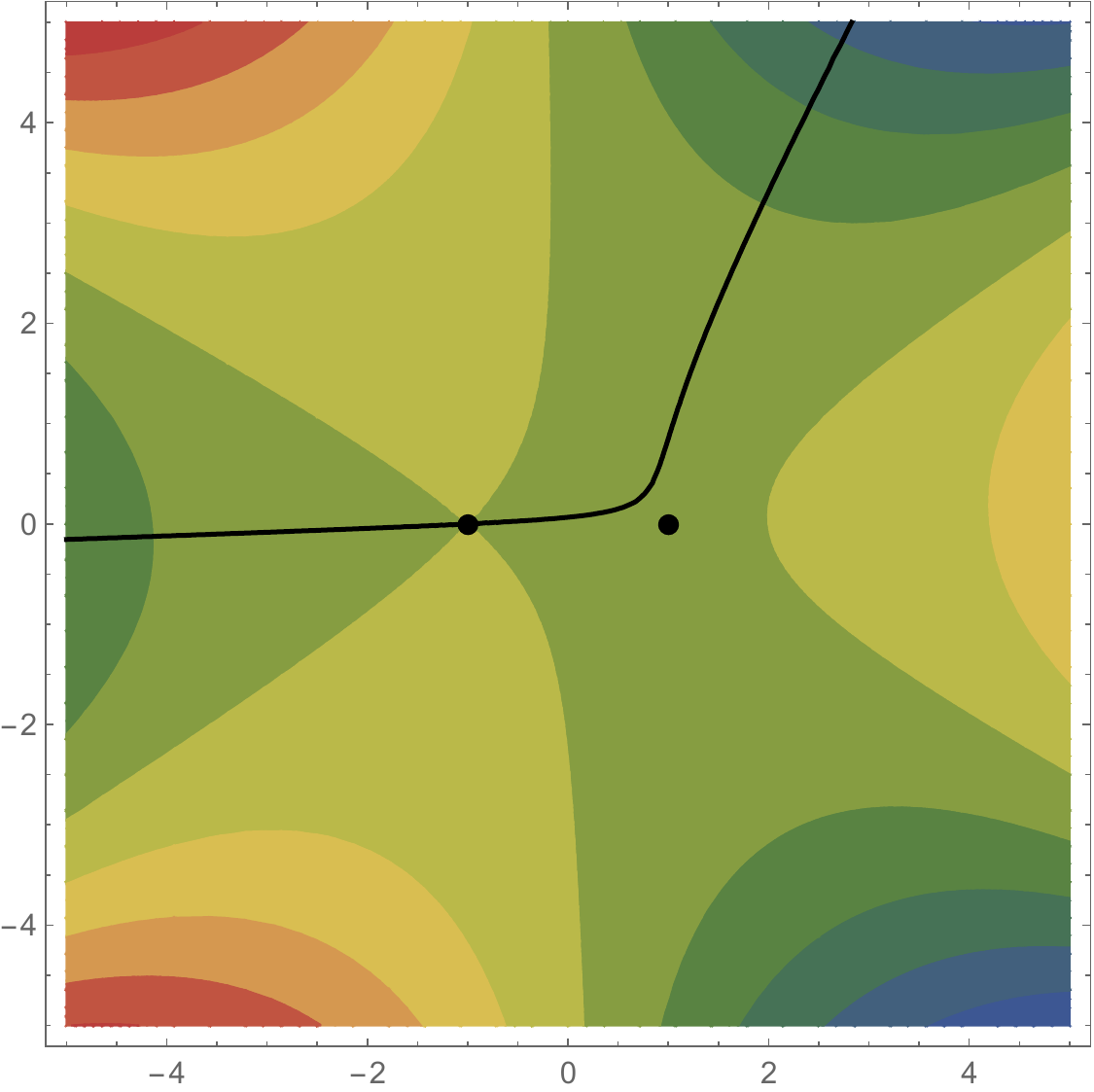}
     \caption{$s = 1 - \iu 0.1$}
     \label{fig:Airy:Lefschetz:jump:neg}
   \end{subfigure}
   \caption{Lefschetz thimbles associated to $z = -1$ for $s = 1 \pm \iu 0.1$. Again, the colour gradient $[\text{blue},\text{red}]$ corresponds to the values of $\Re(s h) \in [-\infty,+\infty]$.}
   \label{fig:Airy:Lefschetz:jump}
	\end{figure}

	The above integral is related to the Airy function as follows. Recall that the Airy function is defined as
	\begin{equation}
		\Ai(t)
		=
		\frac{1}{2\pi\iu} \int_{C} e^{\frac{w^3}{3} - tw} dw\,,
	\end{equation}
	where $C$ is a path starting at $e^{-\iu\frac{\pi}{3}} \infty$ and ending at $e^{\iu\frac{\pi}{3}} \infty$. The change of variables $z = \frac{w}{\sqrt{t}}$ yields
	\begin{equation}
		\Ai(t)
		=
		\frac{t^{1/2}}{2\pi\iu} \int_{\Gamma} e^{t^{3/2}(\frac{z^3}{3} - z)} dz\, ,
	\end{equation}
	where $\Gamma$ is is a path starting at $ e^{-\iu(\frac{\pi}{3} - \frac{\arg(t)}{2})} \infty$ and ending at $e^{\iu(\frac{\pi}{3} - \frac{\arg(t)}{2})} \infty$. The above integral is in the required form, after the identification $s = t^{3/2}$. Notice that the cycle $[\Gamma]$ coincides with $[\Gamma_+]$ in the relative homology group for $|\arg{t}| <\frac{2\pi}{3}$, so that we can recover the asymptotic expansion of the Airy function
	\begin{equation}
		\Ai(t)
		\sim 
		\frac{e^{-\frac{2t^{3/2}}{3}}}{2\sqrt{\pi}} t^{-1/4} \, \sum_{k \ge 0}
 		\frac{(6k)!}{(2k)! (3k)!}
 		\left( -\frac{1}{576 t^{3/2}} \right)^k
	\end{equation}
	from \cref{asymt:I}, valid for $|\arg{t}| <\frac{2\pi}{3}$. A similar analysis can be conducted in the other sectors of the $t$-plane.
\end{example}

\section{On Lucas sequences}
\label{app:Lucas}

In this appendix, we are going to recall some relations between Lucas polynomial sequences $U_n(w,t)$, $V_n(w,t)$ and deduce some expressions for their derivatives. Thanks to such formulae, we can prove a relation that allows us to conclude that the functions \labelcref{eqn:LAiry} satisfy the ODE \labelcref{eqn:LA}.

To begin with, let us recall the recursive definition of Lucas sequences of the first and second kind, denoted respectively as $U_n(w,t)$ and $V_n(w,t)$:
\begin{equation}\label{eqn:rec:Lucas:app}
\begin{split}
	& \begin{cases}
		U_0(w,t) = 0\,, \\
		U_1(w,t) = 1\,, \\
		U_n(w,t) = w U_{n-1}(w,t) - t U_{n-2}(w,t)\,,
	\end{cases}	
	\\
	& \begin{cases}
		V_0(w,t) = 2\,, \\
		V_1(w,t) = w\,, \\
		V_n(w,t) = w V_{n-1}(w,t) - t V_{n-2}(w,t)\,.
	\end{cases}
\end{split}
\end{equation}
Lucas sequences \cite{Luc78} are related to Chebyshev polynomials, as one can easily check comparing the recurrence relations:
\begin{equation}\label{eqn:LtoC}
	U_n(w,t) = t^{\frac{n-1}{2}} \mc{U}_{n-1} \bigl( \tfrac{w}{2\sqrt{t}} \bigr)\,,
	\qquad
	V_n(w,t) = 2 t^{\frac{n}{2}} \mc{T}_{n} \bigl( \tfrac{w}{2\sqrt{t}} \bigr)\,.
\end{equation}
Here $\mc{T}_m$ and $\mc{U}_m$ are the Chebyshev polynomials of the first and second kind respectively. Lucas sequences are also related to various integer sequences like the Fibonacci numbers, Mersenne numbers, Pell numbers, Lucas numbers, and Jacobsthal numbers.

Lucas sequences satisfy many properties, some of which are summarised in the following table.

\begin{table}[h]
\begin{center}
{\renewcommand{\arraystretch}{1.3}%
\begin{tabular}{ l l }
\toprule
	\multirow{2}{*}{Quadratic fomulae} & $U_{n}^2 = t^{n-1} + U_{n+1} U_{n-1}$ \\
	& $V_{n}^2 = - \Delta t^{n-1} + V_{n+1} V_{n-1}$ \\
	\midrule
	\multirow{2}{*}{Conversion fomulae} & $w U_{n} = V_{n} + 2t U_{n-1}$ \\
	& $w V_{n} = \Delta U_{n} + 2t V_{n-1}$ \\
	\midrule
	\multirow{2}{*}{Quasi-homogeneity} & $U_n(a w,a^2 t) = a^{n-1} U_n(w,t)$ \\
	& $V_n(a w,a^2 t) = a^n V_n(w,t)$ \\
\bottomrule
\end{tabular}}
\end{center}
\caption{Some relations satisfied by the Lucas sequences. Here we denote $\Delta = w^2 -4t$, called the discriminant.}
\label{table:Lucas}
\end{table}

Thanks to their relation to Chebyshev polynomials, it is easy to check the following formulae satisfied by the derivatives of the Lucas sequences. In the following, we will denote the derivative with respect to $t$  with a dot and  the derivative with respect to $w$ with a prime.

\begin{lemma}\label{lem:deriv:Lucas}
	Define the discriminant as $\Delta = w^2 -4t$. Then
	\begin{equation}
	\begin{aligned}
		\dot{U}_n & = - \frac{(n-1) V_{n-1} - w U_{n-1}}{\Delta}\,,
		\qquad
		& U_n' & = \frac{n V_{n} - w U_{n}}{\Delta}\,,
		\\
		\dot{V}_n & = - n U_{n-1}\,,
		\qquad
		& V_n'& =  n U_{n}\,.
	\end{aligned}
	\end{equation}
\end{lemma}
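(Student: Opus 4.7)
The plan is to derive the four derivative formulas directly from the Chebyshev polynomial identification \eqref{eqn:LtoC}, using well-known differentiation rules for Chebyshev polynomials. Specifically, I would invoke
\[
    \mc{T}_n'(x) = n\,\mc{U}_{n-1}(x),
    \qquad
    (x^2-1)\,\mc{U}_{n-1}'(x) = n\,\mc{T}_n(x) - x\,\mc{U}_{n-1}(x),
\]
which are classical. Writing $x = w/(2\sqrt{t})$, I would then apply $\partial_w$ and $\partial_t$ to the expressions $V_n(w,t) = 2t^{n/2}\mc{T}_n(x)$ and $U_n(w,t) = t^{(n-1)/2}\mc{U}_{n-1}(x)$ via the chain rule, using $\partial_w x = 1/(2\sqrt{t})$ and $\partial_t x = -w/(4 t^{3/2})$. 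The $\sqrt{t}$ factors cancel cleanly.

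For $V_n' = n U_n$, only the chain rule in $w$ contributes; the prefactor $2 t^{n/2}$ is $w$-independent and $\mc{T}_n'(x) = n\mc{U}_{n-1}(x)$ combines with $t^{n/2}/\sqrt{t} = t^{(n-1)/2}$ to reproduce $n U_n$. For $\dot V_n = -nU_{n-1}$, two terms appear when differentiating in $t$: one from $\partial_t t^{n/2}$, and one from the chain rule through $x$. Both produce expressions of mixed Chebyshev type, and I would reduce them to $U_{n-1}$ using the identity $\mc{T}_n(x) = x\mc{U}_{n-1}(x) - \mc{U}_{n-2}(x)$ (equivalent to the conversion formula $V_n = wU_n - 2tU_{n-1}$ listed in Table~\ref{table:Lucas}). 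For the two $U$-derivative formulas, the factor $x^2-1 = (\Delta)/(4t)$ produced by $\mc{U}_{n-1}'$ accounts for the $\Delta$ appearing in the denominators of the claimed expressions, and the numerators simplify via the same Chebyshev identity.

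As a self-contained alternative I would proceed by induction on $n$, using the Lucas recurrences \eqref{eqn:rec:Lucas:app}. The base cases $n = 0, 1$ are immediate from the initial values. For the inductive step of $V_n' = n U_n$, I would differentiate $V_n = w V_{n-1} - t V_{n-2}$ in $w$ to get $V_n' = V_{n-1} + w V_{n-1}' - t V_{n-2}'$, substitute the inductive hypothesis to obtain $V_{n-1} + (n-1)wU_{n-1} - (n-2)tU_{n-2}$, and reduce to $n U_n = n(wU_{n-1} - tU_{n-2})$ using the conversion identity $V_{n-1} = w U_{n-1} - 2t U_{n-2}$. The other three identities follow the same pattern, with the $\Delta$-denominator cases requiring an extra step: one clears the denominator first, proves the polynomial identity $\Delta\cdot U_n' = nV_n - wU_n$ by induction, and uses the quadratic and conversion relations from Table~\ref{table:Lucas} to close the recursion.

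The only mildly delicate point is the appearance of $\Delta$ in the denominators of the formulas for $U_n'$ and $\dot U_n$: one must verify that the numerators $nV_n - wU_n$ and $(n-1)V_{n-1} - wU_{n-1}$ are indeed divisible by $\Delta = w^2 - 4t$. Via the Chebyshev translation this is automatic from $(x^2-1)\mc{U}_{n-1}'(x) = n\mc{T}_n(x) - x\mc{U}_{n-1}(x)$; in the purely recursive approach, it follows by induction using the quadratic formula $U_n^2 = t^{n-1} + U_{n+1}U_{n-1}$ together with $V_n^2 - \Delta\, U_n^2 = 4 t^n$ (both listed in Table~\ref{table:Lucas}), which ensure the requisite cancellations at each step.
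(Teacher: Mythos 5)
Your primary route is exactly the one the paper intends: Lemma~\ref{lem:deriv:Lucas} is stated in the paper with no written proof beyond the remark that it is ``easy to check'' via the Chebyshev identification \eqref{eqn:LtoC}, and your use of $\mc{T}_n' = n\,\mc{U}_{n-1}$ and $(x^2-1)\mc{U}_{n-1}' = n\mc{T}_n - x\mc{U}_{n-1}$ together with the chain rule (the factor $x^2-1 = \Delta/(4t)$ producing the denominators, and the conversion formulae of Table~\ref{table:Lucas} closing the simplifications) is precisely that check carried out. The inductive alternative is a correct self-contained bonus but not needed.
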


The aim of this appendix is to prove the following result.

\begin{proposition}\label{prop:LA:ODE:solution}
	For $a = 0,\dots,r-2$ and $j = 1, \dots, r-1$, the functions
	\begin{equation}
		\hAi_{r,a}^{j}(t) = \frac{1}{2\pi \sqrt{(-1)^j}} \int_{\hat{C}_j} U_{a+1}(w,t) e^{\frac{1}{r} V_r(w,t)} dw
	\end{equation}
	satisfy the differential equation $\ddot{u}(t) = t^{r-2} u(t) + \frac{a}{t} \dot{u}(t)$.
\end{proposition}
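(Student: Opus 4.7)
The strategy is the standard one for verifying that an exponential integral solves a linear ODE: differentiate under the integral sign, and then exhibit the linear combination $\ddot{u}(t)-t^{r-2}u(t)-\frac{a}{t}\dot{u}(t)$ as the integral of a total $w$-derivative, so that it vanishes because $\hat{C}_j$ is a Lefschetz thimble along which $\Re V_r(w,t)\to-\infty$ at both endpoints.

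First I would compute $\dot{u}$ and $\ddot{u}$ using \cref{lem:deriv:Lucas}. Since $\dot{V}_r=-rU_{r-1}$, one finds, up to the overall constant prefactor,
\[
	\dot{u}(t)=\int_{\hat{C}_j}\bigl(\dot{U}_{a+1}-U_{a+1}U_{r-1}\bigr)\,e^{\frac{1}{r}V_r}\,dw\,,
\]
and an analogous, longer expression for $\ddot{u}$ involving $\ddot{U}_{a+1}$, $U_{a+1}\dot{U}_{r-1}$, $\dot{U}_{a+1}U_{r-1}$, and $U_{a+1}U_{r-1}^2$. All four ingredients can be turned into polynomials in $w,t$ via the explicit expressions of \cref{lem:deriv:Lucas} (using $\Delta=w^2-4t$, the quadratic identity $U_n^2=t^{n-1}+U_{n+1}U_{n-1}$, and the conversion formulae in \cref{table:Lucas} to eliminate the denominators $\Delta$).

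Next, the key identity to exploit is
\[
	\frac{d}{dw}\,e^{\frac{1}{r}V_r(w,t)}=U_r(w,t)\,e^{\frac{1}{r}V_r(w,t)}\,,
\]
which follows from $V_r'=rU_r$. Consequently, for any polynomial $P(w,t)$,
\[
	\int_{\hat{C}_j}\frac{d}{dw}\bigl(P\,e^{\frac{1}{r}V_r}\bigr)\,dw=\int_{\hat{C}_j}(P'+P\,U_r)\,e^{\frac{1}{r}V_r}\,dw=0\,,
\]
the vanishing coming from the asymptotic behaviour along the thimble. Thus, in order to prove the ODE, it suffices to exhibit a polynomial $P_{r,a}(w,t)$ such that
\[
	\ddot{U}_{a+1}-2U_{a+1}\dot{U}_{r-1}-U_{a+1}\dot U_{r-1}+U_{a+1}^2 U_{r-1}^2-t^{r-2}U_{a+1}-\frac{a}{t}\bigl(\dot{U}_{a+1}-U_{a+1}U_{r-1}\bigr)=P_{r,a}'+P_{r,a}U_r\,,
\]
after clearing the $\Delta$ and $t$ denominators via the quadratic and conversion identities. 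The natural guess is $P_{r,a}$ proportional to $U_{a+1}(\dot{V}_{\text{something}})$ or $tU_{a+1}U_{r-1}$-type expressions; more cleanly, one can write the desired right-hand side as a total derivative of the form $\frac{d}{dw}\!\bigl[Q_{r,a}(w,t)\,e^{\frac{1}{r}V_r}\bigr]$ where $Q_{r,a}$ is a $\Z[w,t]$-combination of the $U_n$ and $V_n$. Since the ODE is linear second order, there is a one-parameter choice left, which is naturally fixed by the requirement that the identity reduce, under $U_n\leftrightarrow\mathcal{U}_{n-1}$, $V_n\leftrightarrow 2\mathcal{T}_n$ and the substitution $z=w/(2\sqrt{t})$, to a Chebyshev identity that one can verify directly.

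The main obstacle is the polynomial identity in the last step: matching the two sides term by term requires a careful use of the two quadratic identities $U_n^2=t^{n-1}+U_{n+1}U_{n-1}$ and $V_n^2=-\Delta\,t^{n-1}+V_{n+1}V_{n-1}$ together with the conversion formulae $wU_n=V_n+2tU_{n-1}$ and $wV_n=\Delta U_n+2tV_{n-1}$, and of the derivative formulae of \cref{lem:deriv:Lucas}. The appearance of the factor $a/t$ in the ODE traces back to the identity $\dot{U}_{a+1}\cdot t=\frac{a}{2}U_{a+1}-\tfrac{1}{2}w\,(\text{lower order})$, which is where the index $a$ enters the calculation, and it is this bookkeeping that one must perform cleanly to conclude.
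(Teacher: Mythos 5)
Your strategy is exactly the one the paper follows: differentiate under the integral sign, use $V_r' = rU_r$ so that $\int_{\hat{C}_j}(P' + PU_r)e^{\frac{1}{r}V_r}\,dw = 0$ for any polynomial $P$, and reduce the ODE to a polynomial identity among Lucas sequences. However, as written the argument has a genuine gap: you never produce the primitive. Saying that ``the natural guess is $P_{r,a}$ proportional to $U_{a+1}(\dot V_{\text{something}})$ or $tU_{a+1}U_{r-1}$-type expressions'' and that the remaining freedom ``is naturally fixed by the requirement that the identity reduce to a Chebyshev identity'' is not a proof of existence of such a $P_{r,a}$ --- for a generic second-order operator applied to this integrand no such primitive exists, and exhibiting one is precisely the content of the proposition. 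The paper's proof takes
\[
	Q_{r,a}(w,t) \;=\; U_{a+1}U_{r-2} + \tfrac{1}{t}\,U_{a+1}'\,,
\]
and then verifies, by a term-by-term computation using the derivative formulae of \cref{lem:deriv:Lucas} together with the conversion formulae $wU_n = V_n + 2tU_{n-1}$, $wV_n = \Delta U_n + 2tV_{n-1}$, the recurrence, and the quadratic formula $U_{r-1}^2 = t^{r-2} + U_rU_{r-2}$, that the integrand of $\bigl(\frac{d^2}{dt^2} - t^{r-2} - \frac{a}{t}\frac{d}{dt}\bigr)\hAi^j_{r,a}$ equals $\partial_w\bigl[Q_{r,a}\,e^{\frac{1}{r}V_r}\bigr]$. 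That verification (in particular the cancellation producing the $\frac{a}{t}\dot u$ term, which hinges on the identity $U_{a-1}' + \frac{a}{t}U_a - \frac{1}{t}U_{a+1}' = 0$) is the heart of the proof and is what your sketch defers.

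Two smaller points: in your displayed ``key identity'' the term $U_{a+1}^2U_{r-1}^2$ should be $U_{a+1}U_{r-1}^2$ (it comes from $U_{a+1}(\dot V_r)^2/r^2$ with $\dot V_r = -rU_{r-1}$), and the cross term should be $-2\dot U_{a+1}U_{r-1}$ rather than $-2U_{a+1}\dot U_{r-1}$; your prose description of the ingredients of $\ddot u$ is correct, so these are presumably slips, but they would derail the computation if carried forward.
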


\begin{proof}
	Direct computations show that
	\begin{multline*}
		\left( \frac{d^{2}}{dt^{2}} - t^{r-2} - \frac{a}{t} \frac{d}{dt} \right)
		\hAi_{r,a}^{j}(t)
		= \\
		=
		\frac{1}{2\pi \sqrt{(-1)^j}} \int_{\hat{C}_j}
		\Bigl(
			\ddot{U}_{a+1} + 2 \frac{\dot{U}_{a+1} \dot{V}_{r}}{r} + \frac{U_{a+1} \ddot{V}_{r}}{r} + \frac{U_{a+1} (\dot{V}_{r})^2}{r^2}
			\\
			-t^{r-2} U_{a+1}
			-\frac{a}{t} \dot{U}_{a+1} -\frac{a}{tr} U_{a+1} \dot{V}_{r}
		\Bigr) e^{\frac{1}{r} V_r(w,t)} dw\,.
	\end{multline*}
	We now claim that the integrand on the right hand side equals
	\begin{equation*}
		\frac{\partial}{\partial w}\biggl[
			\Bigl(
				U_{a+1} U_{r-2} + \frac{1}{t} U_{a+1}'
			\Bigr) e^{\frac{1}{r} V_r(w,t)}
		\biggr]\,.
	\end{equation*}
	The claim would prove the proposition, since the integral of the above quantity along $\hat{C}_j$ vanishes (by design, $e^{\frac{1}{r} V_r(w,t)}$ is exponentially decreasing along the contour). Moreover, the claim is equivalent to the following relation between Lucas sequences:
	\begin{multline*}
		\ddot{U}_{a+1} + 2 \frac{\dot{U}_{a+1} \dot{V}_{r}}{r} + \frac{U_{a+1} \ddot{V}_{r}}{r} + \frac{U_{a+1} (\dot{V}_{r})^2}{r^2}
		-t^{r-2} U_{a+1}
		-\frac{a}{t} \dot{U}_{a+1} -\frac{a}{tr} U_{a+1} \dot{V}_{r} = \\
		=
		U_{a+1}' U_{r-2} + U_{a+1} U_{r-2}' + \frac{1}{t} U_{a+1}''
		+ \frac{U_{a+1} U_{r-2} V_r'}{r} + \frac{U_{a+1}' V_r'}{tr}\,.
	\end{multline*}
	In order to prove this expression let us group the terms as follows:
	\begin{multline*}
		\textcolor{ForestGreen}{
			\ddot{U}_{a+1}
		}
		\textcolor{Mulberry}{
			+ 2 \frac{\dot{U}_{a+1} \dot{V}_{r}}{r}
		}
		\textcolor{BrickRed}{
			+ \frac{U_{a+1} \ddot{V}_{r}}{r}
		}
		\textcolor{RoyalBlue}{
			+ \frac{U_{a+1} (\dot{V}_{r})^2}{r^2}
			-t^{r-2} U_{a+1}
		}
		\textcolor{ForestGreen}{
			-\frac{a}{t} \dot{U}_{a+1}
		}
		\textcolor{Mulberry}{
			-\frac{a}{tr} U_{a+1} \dot{V}_{r}
		}
		= \\
		=
		\textcolor{Mulberry}{
			U_{a+1}' U_{r-2}
		}
		\textcolor{BrickRed}{
			+ U_{a+1} U_{r-2}'
		}
		\textcolor{ForestGreen}{
			+ \frac{1}{t} U_{a+1}''
		}
		\textcolor{RoyalBlue}{
			+ \frac{U_{a+1} U_{r-2} V_r'}{r}
		}
		\textcolor{Mulberry}{
			+ \frac{U_{a+1}' V_r'}{tr}
		} \,.
	\end{multline*}
	The green terms can be rewritten as
	\[
		\ddot{U}_{a+1}-\frac{a}{t} \dot{U}_{a+1}-\frac{1}{t}U_{a+1}''
		=
		U_{a-1}''+\frac{a}{t}U_{a}'-\frac{1}{t} U_{a+1}''
		=
		\frac{d}{dw} \left(U_{a-1}' + \frac{a}{t}U_{a} - \frac{1}{t} U_{a+1}' \right).
	\]
	We show that the sum of terms to be derived simplifies:
	\begin{equation*}
	\begin{split}
		U_{a-1}' + & \frac{a}{t}U_{a} - \frac{1}{t} U_{a+1}'
		= \\
		& =
		\frac{1}{t\, \Delta} \left( w U_{a+1} - (a+1) V_{a+1}+ a\Delta U_{a} - w t U_{a-1} + (a-1)t V_{a-1} \right) \\
		& =
		\frac{1}{t\, \Delta} \Big[ -a\big(
				\underbrace{w V_{a} - 2t V_{a-1} - \Delta U_{a}}_{=0}
			\big)
			+ w \big(
				\underbrace{w U_{a}-V_{a}-2t U_{a-1}}_{=0}
			\big)\Big] = 0 \, .
	\end{split}
	\end{equation*}
	In the first equality, we use \cref{lem:deriv:Lucas}; in the second one, the recursive definition of Lucas sequences; in the third one, the conversion formulae of \cref{table:Lucas}. To conclude, the green terms simplify. For the purple terms, we notice that
	\[
		U_{r-2} + \frac{V_r'}{tr} = U_{r-2} + \frac{U_{r}}{t} = \frac{w}{t} U_{r-1}\,,
	\]
	using \cref{lem:deriv:Lucas} and the recurrence relation \labelcref{eqn:rec:Lucas:app}. On the other hand,
	\[
	\begin{split}
		2 \dot{U}_{a+1} -\frac{a}{t} U_{a+1}
		&=
		-2\frac{a V_a - w U_a}{\Delta} -\frac{a}{t} U_{a+1} \\
		&=
		- \frac{w}{t} \frac{( V_{a+1} - w U_{a+1} + a V_{a+1})}{\Delta} \\
		&=
		- \frac{w}{t} \frac{(a+1) V_{a+1} - w U_{a+1}}{\Delta}
		= - \frac{w}{t} U_{a+1}'\,.
	\end{split}
	\]
	In the second equality, we used the conversion formulae of \cref{table:Lucas}. Hence, the purple terms simplify. The red terms simplify using \cref{lem:deriv:Lucas}: 
	\[
		U_{a+1} \left( \frac{\ddot{V}_{r}}{r} - U_{r-2}' \right)
		=
		U_{a+1} \bigl( - \dot{U}_{r-1} + \dot{U}_{r-1} \bigr)
		= 0\,.
	\]
	Similarly, the blue expression simplifies as
	\[
		\frac{(\dot{V}_{r})^2}{r^2}
		- t^{r-2}
		- \frac{U_{r-2} V_r'}{r}
		=
		U_{r-1}^2
		- t^{r-2}
		- U_{r-2} U_r
		= 0\,.
	\]
	In the last equality, we used the quadratic formulae of \cref{table:Lucas}. This completes the proof of the above relation between Lucas sequences, hence it proves the proposition.
\end{proof}

In the following lemma, we prove a useful relation among the Lucas sequences.

\begin{lemma}\label{lem:Uroot}
	For any $ k \in \{1,\ldots, r-1\} $, we have the following identity
	\begin{equation}
		\frac{U_r(w,t)}{w - 2 \sqrt{t} \cos\left(\frac{k \pi}{r}\right)}
		=
		\sum_{p=0}^{r-2} (-1)^{k+1} t^{\frac{r-p-2}{2}} \frac{\sin \bigl( \frac{(p+1)k \pi}{r} \bigr)}{\sin \left(  \frac{k \pi}{r} \right)} U_{p+1}(w,t)\,.
	\end{equation}	
\end{lemma}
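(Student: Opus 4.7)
The plan is to reduce the identity to a classical relation for Chebyshev polynomials of the second kind via the conversion formulae \eqref{eqn:LtoC}. Writing $z = w/(2\sqrt{t})$, we have $U_r(w,t) = t^{(r-1)/2}\mathcal{U}_{r-1}(z)$ and $U_{p+1}(w,t) = t^{p/2}\mathcal{U}_p(z)$. After substitution, a factor of $t^{(r-2)/2}$ cancels on both sides and the identity reduces to
\[
\frac{\mathcal{U}_{r-1}(z)}{z - \cos(k\pi/r)} = 2(-1)^{k+1}\sum_{p=0}^{r-2} \frac{\sin((p+1)k\pi/r)}{\sin(k\pi/r)}\,\mathcal{U}_p(z).
\]

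The key tool is then the Christoffel--Darboux formula for Chebyshev polynomials of the second kind,
\[
\sum_{p=0}^{r-1}\mathcal{U}_p(x)\mathcal{U}_p(y) = \frac{\mathcal{U}_r(x)\mathcal{U}_{r-1}(y) - \mathcal{U}_{r-1}(x)\mathcal{U}_r(y)}{2(x-y)},
\]
which I would specialise at $y = \cos(k\pi/r)$. Using the standard evaluations $\mathcal{U}_{r-1}(\cos(k\pi/r)) = \sin(k\pi)/\sin(k\pi/r) = 0$ and $\mathcal{U}_r(\cos(k\pi/r)) = \sin((r+1)k\pi/r)/\sin(k\pi/r) = (-1)^k$, the right-hand side collapses to $(-1)^{k+1}\mathcal{U}_{r-1}(x)/\bigl(2(x-\cos(k\pi/r))\bigr)$, and the $p = r-1$ term on the left-hand side vanishes, so the sum terminates at $p = r-2$.

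Rearranging and substituting $\mathcal{U}_p(\cos(k\pi/r)) = \sin((p+1)k\pi/r)/\sin(k\pi/r)$ in the remaining summands yields exactly the reduced identity, and tracing back through the substitution $z = w/(2\sqrt{t})$ gives the claim. There is no real obstacle here beyond bookkeeping: the argument is essentially a one-step application of Christoffel--Darboux, with the sign $(-1)^{k+1}$ produced by the evaluation $\mathcal{U}_r(\cos(k\pi/r)) = (-1)^k$. (Alternatively, one could avoid invoking Christoffel--Darboux by noting that the difference of the two sides is a polynomial in $w$ of degree at most $r-2$ that vanishes at all $r-1$ roots $w = 2\sqrt{t}\cos(j\pi/r)$ of $U_r(w,t)$ for $j \neq k$, together with a single explicit evaluation at $w = 2\sqrt{t}\cos(k\pi/r)$ --- using L'Hôpital and $U_r'(w,t) = r U_r(w,t)$-like derivative formulas from \cref{lem:deriv:Lucas} --- to pin down the normalisation.)
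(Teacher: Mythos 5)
Your argument is correct, and it reaches the identity by a genuinely different route than the paper. Both proofs begin with the same reduction via $z = w/(2\sqrt{t})$ and \cref{eqn:LtoC} to the statement $\frac{\mc{U}_{r-1}(z)}{z-\cos(k\pi/r)} = 2(-1)^{k+1}\sum_{p=0}^{r-2}\frac{\sin((p+1)k\pi/r)}{\sin(k\pi/r)}\,\mc{U}_p(z)$, but from there they diverge. The paper posits an expansion $\sum_p c_p\,\mc{U}_p(z)$, extracts each $c_p$ from the orthogonality relation, and then evaluates the resulting integral by combining the linearisation formula $\mc{U}_q\mc{U}_p = \sum_{j=0}^p \mc{U}_{q-p+2j}$ with the finite Hilbert transform $\int_{-1}^1\frac{\sqrt{1-z^2}\,\mc{U}_{n-1}(z)}{z-y}\,dz = -\pi\,\mc{T}_n(y)$ and a geometric-series summation. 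You instead invoke the Christoffel--Darboux kernel for $\mc{U}_n$ at $y=\cos(k\pi/r)$, where $\mc{U}_{r-1}(y)=0$ and $\mc{U}_r(y)=(-1)^k$ make the kernel collapse to exactly the right-hand side in one step; I checked the normalisation (leading coefficients $2^n$, $L^2$-norm $\pi/2$) and the factor $2(x-y)$ in the denominator is correct. Your route is shorter and makes the structural reason for the identity transparent (it is the reproducing kernel evaluated at a node of the Gauss--Chebyshev quadrature); the paper's route is more computational but only uses the two standard integral formulas it cites. One small slip in your parenthetical alternative: $U_r(w,t)$ has $r-1$ roots in total, so excluding $j=k$ leaves only $r-2$ vanishing conditions for a polynomial of degree at most $r-2$; that is still enough once supplemented by the single normalising evaluation you describe, but the count as written is off by one (and note that verifying the vanishing at $j\neq k$ already uses the orthogonality identity \eqref{eqn:trig:id}, so that variant is not really more elementary).
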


\begin{proof}
	First, we use the relation between the Lucas sequences and the Chebyshev polynomials~\eqref{eqn:LtoC} to write
	\[
		\frac{U_r(w,t)}{w - 2 \sqrt{t} \cos\left(\frac{k \pi}{r}\right) }
		=
		\frac{t^{\frac{r-2}{2}}}{2} \left( \frac{ \mc{U}_{r-1}(\tfrac{w}{2\sqrt{t}})}{\frac{w}{2\sqrt{t}} - \cos\left(\frac{k \pi}{r}\right)} \right).
	\] 
	As $ z =  \cos\left(\frac{k \pi}{r}\right) $ for any $ k \in \{1,\ldots, r-1\} $ is a root of $ U_{r-1}(z) = 0  $, we have the following expansion
	\begin{equation*}
	 \frac{ \mc{U}_{r-1}(z)}{z - \cos\left(\frac{k \pi}{r}\right)}  = \sum_{p =0}^{r-2} c_p \mc{U}_{p}(z)\,,
	\end{equation*}
	and we would like to determine $c_p$. In the following, we will use various properties of Chebyshev polynomials, and refer to \cite{DLMF}. This can be achieved using the following orthogonality property for the Chebyshev polynomials:
	\[
		\int_{-1}^1 \mc{U}_n(z) \mc{U}_m(z)  \sqrt{1-z^2}   dz  = \frac{\pi}{2} \delta_{n,m}\,.
	\]
	Solving for $c_p$, we find
	\begin{equation*}
	\begin{split}
		c_p
		& =
		\frac{2}{\pi} \int_{-1}^1 \frac{1}{z - \cos\left(\frac{k \pi}{r}\right)} \mc{U}_{r-1}(z) \, \mc{U}_{p}(z) \sqrt{1-z^2} dz \\
		& =
		\sum_{j =0}^{p} \frac{2}{\pi} \int_{-1}^1 \frac{1}{z - \cos\left(\frac{k \pi}{r}\right)} \mc{U}_{r-1 -p +2j}(z) \sqrt{1-z^2} dz\,,
	\end{split}
	\end{equation*}
	where we used the property $\mc{U}_{q}(z)\,\mc{U}_{p}(z)=\sum _{j=0}^{p}\,\mc{U}_{q-p+2j}(z) $ for $ q \geq p $. The integral on the last line can be solved using the formula
	\[
		\int_{-1}^{1} \frac{ \sqrt{1-z^2}\,  \mc{U}_{n-1}(z) }{z-y} dz = -\pi \, \mc{T}_n(y)\,,
	\]
	which gives
	\[
		c_p
		=
		-2 \sum_{j =0}^{p} \mc{T}_{r-p+2j} \left(\cos\left(\frac{k \pi}{r}\right)\right)
		=
		-2 \sum_{j =0}^{p} \cos\left(\frac{(r-p +2j)k \pi}{r}\right).
	\]
	By rewriting the $\cos$ functions as exponentials, and using the formula for a geometric series, we can simplify the $c_p$ to 
	\[
		c_p = (-1)^{k+1} 2 \frac{\sin \bigl( \frac{(p+1)k \pi}{r} \bigr)}{\sin \left( \frac{k \pi}{r} \right)}\,.
	\]
	By using the equation for $c_p$, we finally get
	\begin{equation*}
	\begin{split}
		\frac{U_r(w,t)}{w - 2 \sqrt{t} \cos\left(\frac{k \pi}{r}\right) }
		&=
		\frac{t^{\frac{r-2}{2}}}{2}  \sum_{p =0}^{r-2} (-1)^{k+1} 2 \frac{\sin \bigl( \frac{(p+1)k \pi}{r} \bigr)}{\sin \left(  \frac{k \pi}{r} \right)} \mc{U}_{p}\left(\frac{w}{2\sqrt{t}}\right) \\
		&=
		(-1)^{k+1}  \sum_{p =0}^{r-2} t^{\frac{r-p-2}{2}} \frac{\sin \bigl( \frac{(p+1)k \pi}{r} \bigr)}{\sin \left(  \frac{k \pi}{r} \right)} U_{p+1}\left(w,t\right) \, ,
	\end{split}
	\end{equation*}
	where we rewrote the Chebyshev $\mc{U}_{p}$ polynomials in terms of the Lucas sequences using \cref{eqn:LtoC}. Thus, we get the statement of the lemma.
\end{proof}

Another useful relation is the following.

\begin{lemma}\label{lem:Lid}
	We have the following identity
	\begin{equation}
		\hAi_{r,0}^j(t) = - \frac{1}{2\pi \sqrt{(-1)^j}} \int_{\hat{C}_j} w U_r(w,t) e^{\frac{1}{r}V_r(w,t)} dw\,.
	\end{equation}
\end{lemma}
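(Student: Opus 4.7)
The plan is to recognize this identity as an integration by parts, where the Lefschetz thimble allows the boundary terms to vanish. First, I would unpack the left-hand side using $U_1(w,t)=1$, so
\[
	\hAi_{r,0}^j(t) = \frac{1}{2\pi\sqrt{(-1)^j}} \int_{\hat{C}_j} e^{\frac{1}{r}V_r(w,t)}\, dw,
\]
and hence the identity we need is equivalent to showing that
\[
	\int_{\hat{C}_j} \bigl(1 + w\, U_r(w,t)\bigr)\, e^{\frac{1}{r}V_r(w,t)}\, dw = 0.
\]

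The key observation is that the integrand is an exact form in $w$. Indeed, from \cref{lem:deriv:Lucas} we have $V_r'(w,t) = r\, U_r(w,t)$, so the Leibniz rule gives
\[
	\frac{\partial}{\partial w}\Bigl[ w\, e^{\frac{1}{r}V_r(w,t)} \Bigr]
	= e^{\frac{1}{r}V_r(w,t)} + w \cdot \frac{V_r'(w,t)}{r}\, e^{\frac{1}{r}V_r(w,t)}
	= \bigl(1 + w\, U_r(w,t)\bigr)\, e^{\frac{1}{r}V_r(w,t)}.
\]

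To conclude I would invoke the defining property of the Lefschetz thimble $\hat{C}_j$: it is the steepest-descent contour through the critical point $w = 2\sqrt{t}\cos(\tfrac{j\pi}{r})$ for the phase $\frac{1}{r}V_r(w,t)$, so $\Re\bigl(\frac{1}{r}V_r(w,t)\bigr)\to -\infty$ at both ends of $\hat{C}_j$. The prefactor $w$ grows only polynomially, hence $w\, e^{\frac{1}{r}V_r(w,t)}$ vanishes at the endpoints of the contour, and the integral of the total derivative is zero. This gives the claimed identity.

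I do not foresee a genuine obstacle here: once the derivative relation $V_r' = r\, U_r$ is in hand, the statement is a one-line integration by parts, and the exponential decay on Lefschetz thimbles that justifies discarding the boundary terms is exactly the same mechanism already used implicitly in the definition of the Airy--Lucas functions and in the proof of \cref{prop:LA:ODE:solution}. The only care needed is to stay in a good Stokes sector of $t$, so that $\hat{C}_j$ is unambiguously defined as a single Lefschetz thimble, which is the standing assumption of the section.
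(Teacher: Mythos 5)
Your proof is correct and follows essentially the same route as the paper: recognise $\bigl(1 + w U_r(w,t)\bigr) e^{\frac{1}{r}V_r(w,t)}$ as $\frac{\partial}{\partial w}\bigl(w\,e^{\frac{1}{r}V_r(w,t)}\bigr)$ via $V_r' = r U_r$, note that the integral over the Lefschetz thimble of this exact form vanishes by the decay of the exponential, and use $U_1 = 1$ to identify the remaining term with $\hAi_{r,0}^j$.
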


\begin{proof}
	As
	\[
		\frac{\partial}{\partial w} \left(we^{\frac{1}{r}V_r(w,t)} \right)
		=
		\bigl(1 + w U_r(w,t) \bigr) e^{\frac{1}{r}V_r(w,t)}\,,
	\]
	we obtain that	the integral vanishes:
	\[
		\int_{\hat{C}_j} \bigl( 1 + w U_r(w,t) \bigr) e^{\frac{1}{r}V_r(w,t)} dw = 0\,,
	\]
	due to the definition of the Lefschetz thimble $\hat{C}_j$. Noting that $U_1(w,t) = 1$ gives the statement of the lemma.
\end{proof}


\printbibliography

\end{document}